\pgfplotsset{compat=newest}
\newtheorem{theorem}{Theorem}[section]
\newtheorem{cor}[theorem]{Corollary}
\newtheorem{lemma}[theorem]{Lemma}
\newtheorem{proposition}[theorem]{Proposition}
\newtheorem{conjecture}[theorem]{Conjecture}
\newtheorem{definition}[theorem]{Definition}
\newcommand{\C}{\mathbb{C}}
\newcommand{\R}{\mathbb{R}}
\newcommand{\Q}{\mathbb{Q}}
\newcommand{\Z}{\mathbb{Z}}
\newcommand{\N}{\mathbb{N}}
\title{Circle Packings from Tilings of the Plane}
\author{Philip Rehwinkel, Ian Whitehead, David Yang, Mengyuan Yang}
\date{}
\begin{document}

\begin{abstract} We introduce a new class of fractal circle packings in the plane, generalizing the polyhedral packings defined by Kontorovich and Nakamura. The existence and uniqueness of these packings are guaranteed by infinite versions of the Koebe-Andreev-Thurston theorem. We prove structure theorems giving a complete description of the symmetry groups for these packings. And we give several examples to illustrate their number-theoretic and group-theoretic significance.
\end{abstract}

\maketitle

\section{Introduction}

The well-known Apollonian circle packing can be constructed from a set of four base circles, and four dual circles, as shown in Figure \ref{fig:apollonian}. The orbit of the base circles under the group generated by reflections through the dual circles is the packing, an infinite fractal set of circles. Beyond their aesthetic appeal, Apollonian packings have properties of great interest in number theory, group theory, and fractal geometry. 

\begin{figure}[h]
\begin{subfigure}{.49\textwidth}
\begin{tikzpicture}
\clip (0,0) circle(3.0);
\draw[thick, blue] (1.000, 0) circle(0.8660);
\draw[thick, blue] (-0.500, 0.866) circle(0.8660);
\draw[thick, blue] (-0.500, -0.866) circle(0.8660);
\draw[thick, blue] (0, 0) circle(1.866);
\draw[thick, red] (0, 0) circle(0.5000);
\draw[thick, red] (1.866, 3.232) circle(3.232);
\draw[thick, red] (1.866, -3.232) circle(3.232);
\draw[thick, red] (-3.732, 0) circle(3.232);
\end{tikzpicture}
\caption{Base (blue) and dual (red) circles}
\end{subfigure}
\begin{subfigure}{.49\textwidth}
\includegraphics[width=\textwidth]{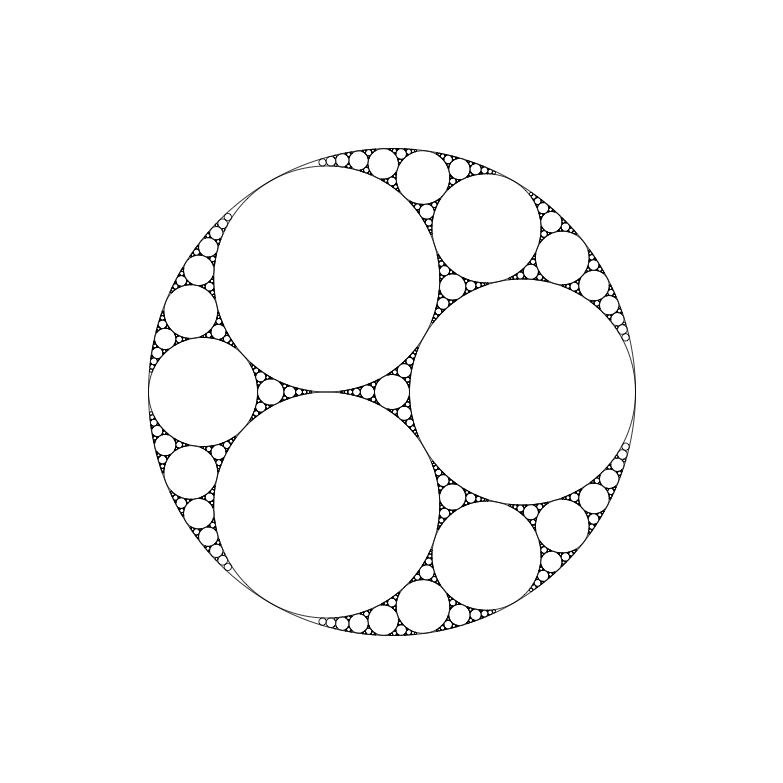}
\caption{Apollonian packing}
\end{subfigure}
\caption{Constructing the Apollonian packing}
\label{fig:apollonian}
\end{figure}

In \cite{KontorovichNakamura}, Kontorovich and Nakamura define polyhedral circle packings, generalizing the Apollonian packing construction. Any circle configuration has a tangency graph, with a vertex for each circle and an edge for each tangency between circles. In the case of the Apollonian packing, both the base circles and the dual circles have tetrahedral tangency graphs. In general, one can start with a finite set of base circles whose tangency graph is the graph of any polyhedron and a finite set of dual circles whose tangency graph is the graph of the dual polyhedron. The orbit of the base circles under the group generated by the dual circles is a polyhedral packing. Polyhedral packings encompass many of the generalizations of the Apollonian packing that have been studied previously. For example, the packing introduced by Guettler and Mallows \cite{GuettlerMallows} is the octahedral packing; the $\Q[\sqrt{-2}]$ packing studied by Stange \cite{Stange} is the cubic packing. 

In this article, we study packings which originate from infinite configurations of base and dual circles, a further generalization. A particularly symmetric example is shown in Figure \ref{fig:square}. In this example, both the base and dual circles have the square lattice as their tangency graphs. Again, the orbit of the base circles under the group generated by the dual circles is a fractal set of circles. We call this object the square packing. 

\begin{figure}[H]
\begin{subfigure}{.45\textwidth}
\fbox{
\begin{tikzpicture}
\clip (-2,-2) -- (-2, 2) -- (4,2) -- (4,-2);
\draw[thick, blue] (-2,-2) circle(1.0);
\draw[thick, blue] (0,-2) circle(1.0);
\draw[thick, blue] (2,-2) circle(1.0);
\draw[thick, blue] (4,-2) circle(1.0);
\draw[thick, blue] (-2,0) circle(1.0);
\draw[thick, blue] (0,0) circle(1.0);
\draw[thick, blue] (2,0) circle(1.0);
\draw[thick, blue] (4,0) circle(1.0);
\draw[thick, blue] (-2,2) circle(1.0);
\draw[thick, blue] (0,2) circle(1.0);
\draw[thick, blue] (2,2) circle(1.0);
\draw[thick, blue] (4,2) circle(1.0);

\draw[thick, red] (-1,-1) circle(1.0);
\draw[thick, red] (1,-1) circle(1.0);
\draw[thick, red] (3,-1) circle(1.0);
\draw[thick, red] (-1,1) circle(1.0);
\draw[thick, red] (1,1) circle(1.0);
\draw[thick, red] (3,1) circle(1.0);
\end{tikzpicture}
}
\caption{Base (blue) and dual (red) circles}
\end{subfigure}
\hspace{.08\textwidth}
\begin{subfigure}{.45\textwidth}
\fbox{\includegraphics[width=1.058\textwidth]{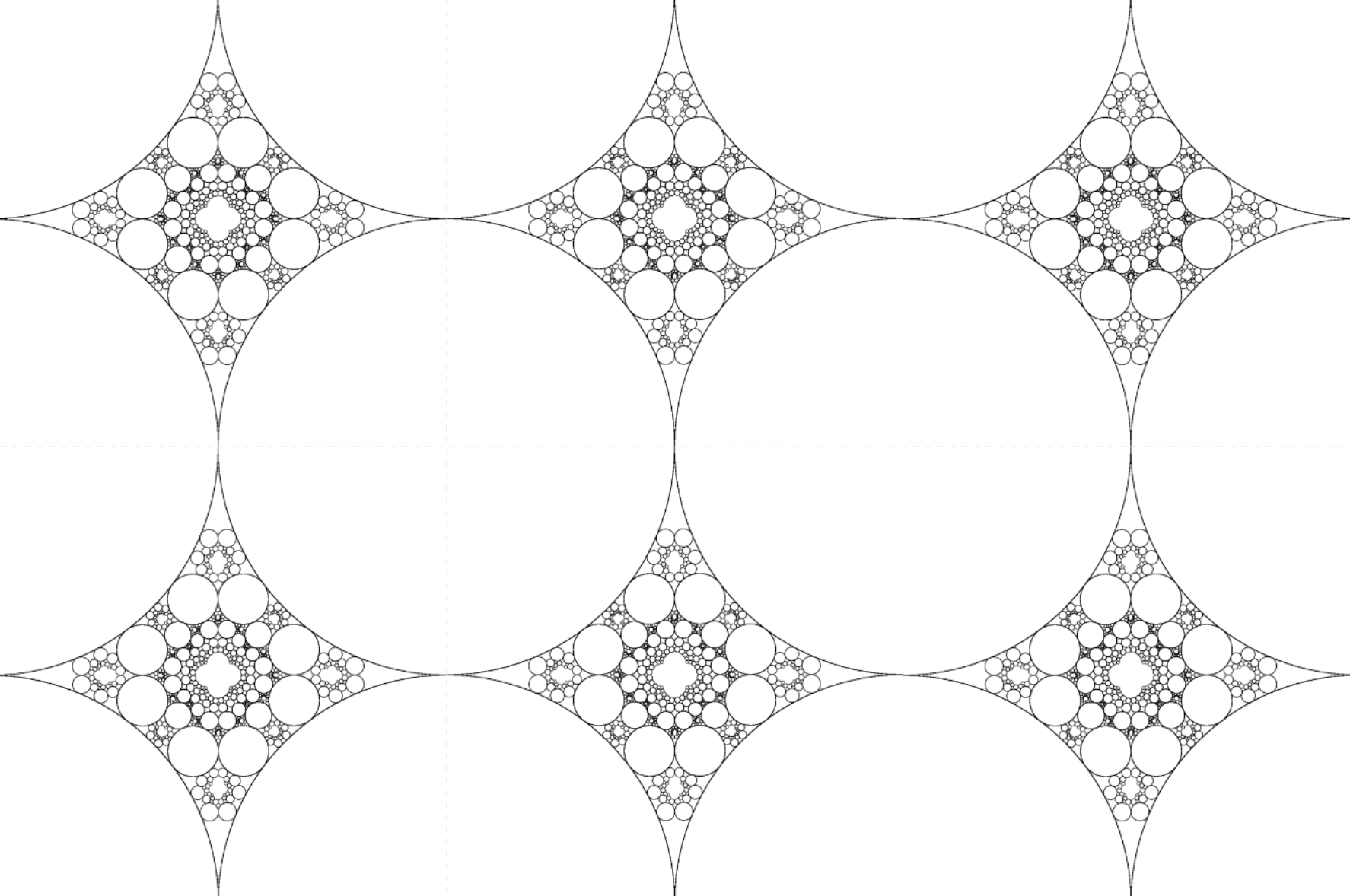}}
\caption{Square packing}
\end{subfigure}
\caption{Constructing the square packing}
\label{fig:square}
\end{figure}

In general, we work with a base circle configuration $B$ and a dual configuration $\hat{B}$ whose tangency graphs determine dual tilings, or  cellular decompositions of the sphere $\hat{\C}$ or the plane $\C$--see Definition \ref{basedual}. Tilings of the sphere give rise to polyhedral packings, while tilings of the plane give rise to new examples. In both cases, the final packings have similar geometric properties. For example, the circles in the packing are pairwise disjoint or tangent, they can be oriented with disjoint interiors, and the interiors are dense in the ambient space. The symmetry groups of polyhedral packings and of our new examples have similar structure, as we prove in Section 4. And some of our new examples have integrality properties, raising number-theoretic questions. 

One motivation to introduce these new packings comes from the literature on the Koebe-Andreev-Thurston theorem and its generalizations. The Koebe-Andreev-Thurston theorem \cite{Koebe, Andreev, Thurston} is the remarkable result that any pair of finite graphs $G$, $\hat{G}$ representing a polyhedron and its dual can be realized as the tangency graphs for a pair of dual circle configurations $B$, $\hat{B}$. Moreover, the circle configurations $B$, $\hat{B}$ are unique up to conformal automorphism of $\hat{\C}$ (see Theorem \ref{KAT1} for a precise statement). The Koebe-Andreev-Thurston theorem implies the existence and uniqueness up to M\"{o}bius transformation of a circle packing for every polyhedron. 

It is natural to try to extend this theorem to infinite graphs. Important work of Beardon-Stephenson and of Schramm achieves this in many cases \cite{BeardonStephenson, Schramm:ExistenceandUniqueness, Schramm:Rigidity}, following a constructive approach suggested by Thurston. Stephenson's text \cite{Stephenson} synthesizes this work. One result is that any infinite graph $G$ representing a triangulation of the plane can be realized as the tangency graph for a circle configuration $B$, and $B$ is unique up to conformal automorphism of $\C$ (see Theorem \ref{KAT2}). The proofs use deep geometric ideas---mappings between circle configurations give a discrete analogue of the Riemann mapping theorem. 

Just as the construction of polyhedral circle packings relies on the finite Koebe-Andreev-Thurston theorem, our construction relies on its infinite generalizations. In Section 3, we state various versions of the theorem which imply the existence and uniqueness up to conformal automorphism of many of our packings. We also make a more general conjecture which would imply existence and uniqueness in all cases.  Our work combines geometric ideas from the infinite Koebe-Andreev-Thurston theorem with arithmetic ideas from the Apollonian packing and its relatives. 

Another motivation comes from the definitions of crystallographic packings in \cite{KontorovichNakamura}. These are a class of packings which generalize the Apollonian packing, and encompass many known examples of circle and sphere packings. To construct crystallographic circle packings, one can start with a geometrically finite reflection group which acts on  $\mathbb{H}^3$ with finite covolume. Each wall of the fundamental chamber intersects the spherical boundary of $\mathbb{H}^3$ in a circle. Suppose that these circles are partitioned into two sets, a ``cluster'' and ``cocluster,'' such that circles in the cluster are pairwise disjoint or tangent, and each circle in the cluster is disjoint, tangent, or orthogonal to each circle in the cocluster. Then the orbit of the cluster under the group generated by reflections across the cocluster is a crystallographic packing. All polyhedral packings are crystallographic, but not all crystallographic packings are polyhedral. 

Kontorovich and Nakamura classify superintegral crystallographic circle packings. Every superintegral packing arises from an arithmetic finite-covolume reflection group acting on $\mathbb{H}^3$. There are finitely many such groups up to commensurability, tabulated in \cite{ScharlauWalhorn}, and all the non-cocompact groups in the tabulation give rise to packings. The most interesting case is the Bianchi group of the Eisenstein integers. This is represented by the Coxeter-Dynkin diagram:

\begin{center}

\begin{tikzpicture}[scale=1]
\draw[thin] (1.1,0) -- (1.9,0);
\draw[thin] (2.1,0) -- (2.9,0);
\draw[thin] (3.1,0.02) -- (3.9,0.02);
\draw[thin] (3.1,-0.02) -- (3.9,-0.02);
\draw[thin] (3.1,0.06) -- (3.9,0.06);
\draw[thin] (3.1,-0.06) -- (3.9,-0.06);
\filldraw [black] (1,0) circle (3pt);
\filldraw [black] (2,0) circle (3pt);
\filldraw [black] (3,0) circle (3pt);
\filldraw [black] (4,0) circle (3pt);
\end{tikzpicture}

\end{center}

\noindent Each vertex represents a circle, and the edge types indicate angles between circles. There is a finite-index subgroup with the diagram:

\begin{center}

\begin{tikzpicture}[scale=1]
\draw[line width=2pt] (1.1,0) -- (1.9,0);
\draw[thin] (2.1,0) -- (2.9,0);
\draw[thin] (3.1,0.02) -- (3.9,0.02);
\draw[thin] (3.1,-0.02) -- (3.9,-0.02);
\draw[thin] (3.1,0.06) -- (3.9,0.06);
\draw[thin] (3.1,-0.06) -- (3.9,-0.06);
\draw[line width=2pt] (4.1,0) -- (4.9,0);
\filldraw [black] (1,0) circle (3pt);
\filldraw [black] (2,0) circle (3pt);
\filldraw [black] (3,0) circle (3pt);
\filldraw [black] (4,0) circle (3pt);
\filldraw [black] (5,0) circle (3pt);
\end{tikzpicture}

\end{center}

\noindent See Figure \ref{fig:triangular3} (A), where this diagram is realized as a set of five circles. If the left vertex is the cluster and the remaining vertices are the cocluster, the result is the triangular packing of Figure \ref{fig:triangular4}. Similarly, if the right vertex is the cluster and the remaining vertices are the cocluster, the result is the hexagonal packing of Figure \ref{fig:triangular5}. These are fundamental examples of superintegral crystallographic packings. We conjecture that they are not polyhedral, or commensurate (on the level of hyperbolic reflection groups) to any polyhedral packing. Nevertheless, our construction allows us to work with these packings similarly to polyhedral packings. One might hope to realize all superintegral crystallographic circle packings with our construction.


A final motivation comes from limits in polyhedral packing families. In \cite{Ahmed2}, fractal dimensions for many polyhedral packings are computed. It is observed that for some sequences of polyhedra--pyramids, prisms, antiprisms, etc.--the fractal dimensions converge to a limit. More surprisingly, the packings themselves converge to a well-defined limit. This phenomenon is illustrated in Figure \ref{packinglimit}. A configuration of circles in the polyhedral packing for the 100-sided prism is shown. As the number of sides increases, this configuration approaches a configuration found in the square packing of Figure \ref{fig:square}. The limit of the prism packings is our square packing. 

\begin{figure}[H]
\begin{subfigure}{.49\textwidth}
\includegraphics[width=\textwidth]{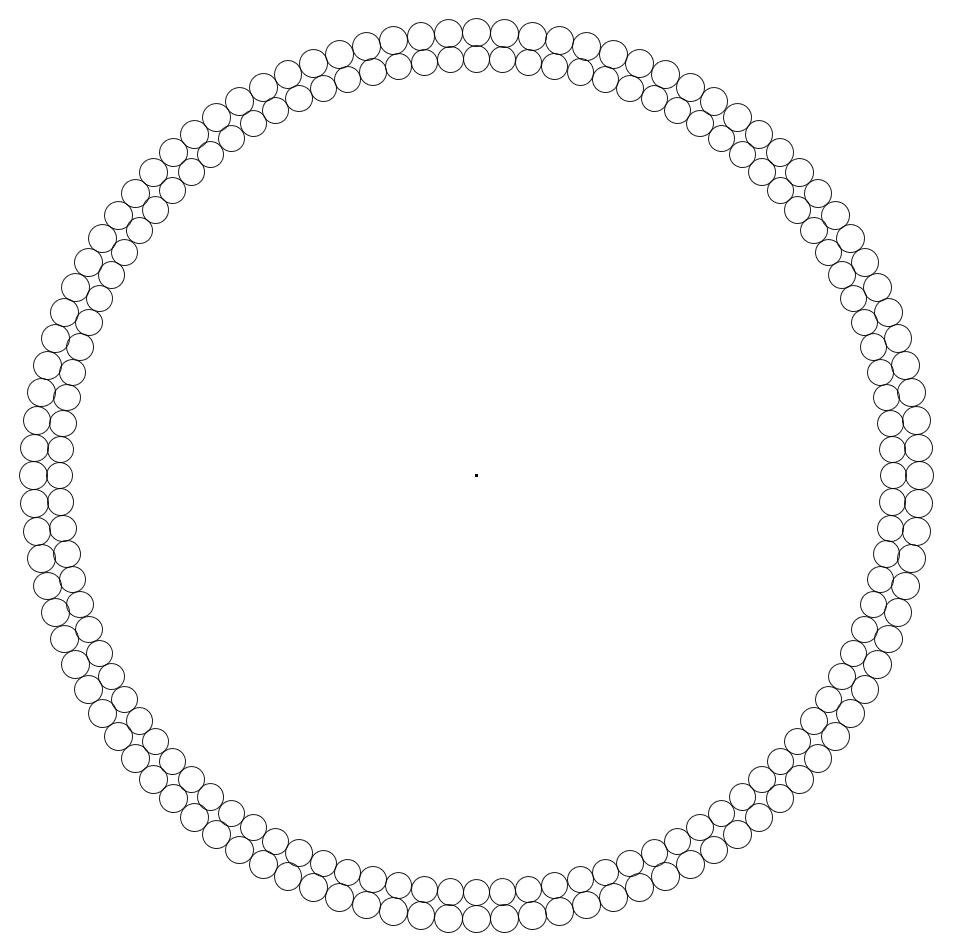}
\caption{100-sided prism configuration}
\end{subfigure}
\begin{subfigure}{.49\textwidth}
\includegraphics[width=\textwidth]{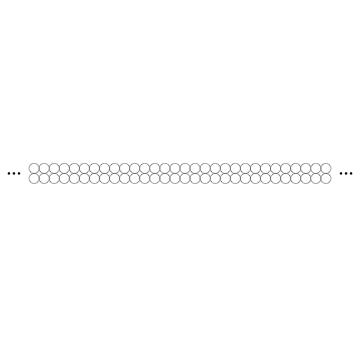}
\caption{Square configuration} 
\end{subfigure}
\caption{Limiting configurations}
\label{packinglimit}
\end{figure}

The article \cite{Ahmed2} focuses on examples, and does not give a general criterion for a sequence of circle packings to converge. Some of the limit packings are also polyhedral--e.g. the limit of the pyramid packings is the original Apollonian packing--and others are not polyhedral but satisfy our definition. We discuss all these examples in Section 5. It would be very interesting to know whether our definition gives the closure of the space of polyhedral packings. This would require a more systematic development of the notion of a limit in this space.

This article is structured as follows. In Section 2, Definitions \ref{basedual} and \ref{packing} describe the class of packings we study. We prove some of the fundamental geometric consequences of these definitions, and compare them to the definitions of polyhedral, crystallographic, and Kleinian packings. In Section 3, we recall versions of the Koebe-Andreev-Thurston theorem and propose our extension, Conjecture \ref{KATconjecture}. In Section 4, we analyze the symmetries of our packings. Theorems \ref{gamma1}, \ref{gamma2}, and \ref{semidirectproduct} give a complete description of the symmetry group. In Section 5, we give three main examples of our packings, which we call the triangular, square, and hexagonal packings. We focus on their arithmetic properties--integrality, quadratic and linear forms. In Section 6, we give a broader class of examples, with a focus on symmetries. Theorem \ref{wallpaper} shows that all 17 wallpaper groups appear in the symmetry groups of packings.  

In future work, we hope to elaborate on the number theory of the packings described here. As indicated in Section 5, many of our packings have integral curvatures. Is there an asymptotic formula for curvatures in the packing, as in \cite{KontorovichOh}? Is there a local-to-global principle for curvatures, as conjectured in \cite{GLMWYNumTheory}? These questions can be answered by the methods of \cite{FuchsStangeZhang} for some periodic packings. They become much more subtle if the base configuration lacks any symmetry. It is possible to construct integral, aperiodic packings using the same refinement method as in the proof of Theorem \ref{wallpaper}. These packings are geometrically interesting, and their number theory remains to be explored. 

\subsection{Acknowledgements} We thank Daniel Allcock, Arthur Baragar, Michael Dougherty, Cathy Hsu, Anna Felickson, Alex Kontorovich, Alice Mark, and Kate Stange for helpful conversations relating to this project. We thank Nooria Ahmed, William Ball, Ellis Buckminster, Emilie Rivkin, Dylan Torrance, Jake Viscusi, Runze Wang, and Gary Yang for raising some of the questions that influenced our work. We are grateful to Swarthmore College for funding the summer research project that led to this article, and we thank all the faculty and students in the Department of Mathematics and Statistics who helped create a productive research community. 

\section{Definitions and Basic Properties}

The extended complex plane $\hat{\C}$ is $\C \cup \lbrace \infty \rbrace$, with the topology of the sphere. Our packings consist of oriented generalized circles, i.e. circles and lines, in $\hat{\C}$. Each generalized circle divides $\hat{\C}$ into two simply connected regions. Choosing an orientation for the circle is equivalent to choosing one of these regions to be the interior, and the other to be the exterior. When we refer to circles in this article, we always mean oriented generalized circles. Oriented generalized circles have an action by the group of holomorphic and antiholomorphic M\"{o}bius transfomations $\mathrm{M\ddot{o}b} \cong \mathrm{SL}_2(\C) \rtimes \Z/2\Z$. For more details on this setup, see \cite{LagariasMallowsWilks}.

For any collection of circles $B$, we can associate its tangency graph $G_B$ which has a vertex for every circle and an edge between each pair of tangent circles. We will say that a collection of circles accumulates at a point $x\in \hat{\C}$ if any open neighborhood of $x$ contains infinitely many circles from this collection. We now define the notions of base and dual circle configurations, which are the starting point for the packings we construct. 

\begin{definition}\label{basedual} Let $B$ and $\hat{B}$ be two collections of oriented generalized circles, with tangency graphs $G_B$ and $G_{\hat{B}}$, respectively. Then $B$ is called a base configuration and $\hat{B}$ is called a dual configuration if the following properties hold:

\begin{enumerate}
    \item The circles in $B$ are pairwise disjoint or tangent, with disjoint interiors, and the same holds for the circles in $\hat{B}$.
    \item The tangency graphs $G_{B}$ and $G_{\hat{B}}$ are each nontrivial, connected, and are duals of each other.
    \item If a circle in $B$ and a circle $\hat{B}$ intersect, they do so orthogonally and they correspond to a face-vertex pair in the tangency graphs. Otherwise, their interiors are disjoint. 
    \item $B \cup \hat{B}$ has at most one accumulation point.
\end{enumerate}
\end{definition}

\noindent Note that the roles of $B$ and $\hat{B}$ are interchangeable in this definition. An example of a base and dual configuration pair is shown in Figure \ref{fig:general}.

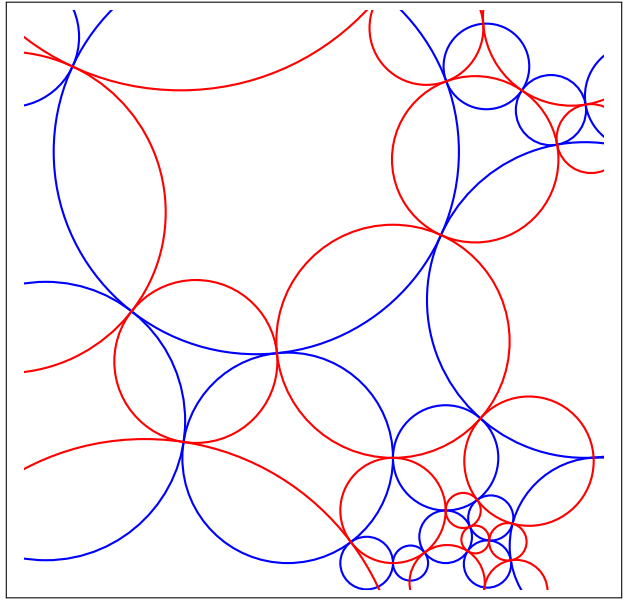
\begin{figure}[H]
\fbox{
\begin{tikzpicture}[scale=.7]
\clip (-8, -2.5) -- (3, -2.5) --(3, 8.5) -- (-8, 8.5);

\draw[thick, blue] (2.90215, -1.67837) circle(1.68539); 
\draw[thick, blue] (-1.5, -2.) circle(0.5); 
\draw[thick, blue] (2., 6.60936) circle(0.666667); 
\draw[thick, blue] (0., 0.) circle(1.); 
\draw[thick, blue] (-3.59569, 5.81749) circle(3.84791); 
\draw[thick, blue] (0.774475, 7.43953) circle(0.81357); 
\draw[thick, blue] (0., -1.5) circle(0.5);
\draw[thick, blue] (-3., 0.) circle(2.); 
\draw[thick, blue] (3.64568, 6.873) circle(1.); 
\draw[thick, blue] (0.857143, -1.14286) circle(0.428571); 
\draw[thick, blue] (-8.29483, 8.) circle(1.33333); 
\draw[thick, blue] (2.64575, 3.) circle(3.); 
\draw[thick, blue] (-7.59275, 0.699762) circle(2.64575); 
\draw[thick, blue] (-0.666667, -2.) circle(0.333333); 
\draw[thick, blue] (0.795004, -2.01821) circle(0.448987);

\draw[thick, red] (2.39202, 8.37676) circle(1.68314);
\draw[thick, red] (2.76183,   6.0684) circle(0.654654);
\draw[thick, red] (0.561685,   5.67477) circle(1.58043);
\draw[thick, red] (1.58301, -0.0627461) circle(1.22876);
\draw[thick, red] (-1.,   2.21525) circle(-2.21525);
\draw[thick, red] (0.333333, -1.) circle(0.333333);
\draw[thick, red] (-1., -1.) circle(-1.);
\draw[thick, red] (-4.74397, 1.83188) circle(-1.5483);
\draw[thick, red] (-8.37305,   4.66627) circle(-3.05648);
\draw[thick, red] (-5.71414, -4.5356) circle(-4.89267);
\draw[thick, red] (-0.364423,   8.16993) circle(1.08105);
\draw[thick, red] (1.18162, -1.59554) circle(0.35572);
\draw[thick, red] (0.563183, -1.55164) circle(0.264276);
\draw[thick, red] (0.0287017, -2.3716) circle(0.714503);
\draw[thick, red] (1.33409, -2.54085) circle(0.601807);
\draw[thick, red] (-5.03541, 11.8525) circle(4.86701);
\end{tikzpicture}
}
\caption{Base and dual circle configurations}
\label{fig:general}
\end{figure}

The tangency graphs $G_{B}$ and $G_{\hat{B}}$ are simple, and by (1) they have natural planar embeddings, placing each vertex at the center of its circle (or choosing an arbitrary interior point as the vertex if the circle is a line). The duality between graphs, with faces of $G_B$ corresponding to vertices of $G_{\hat{B}}$ and vice versa, is defined in this context. The four properties, and especially this duality, imply stronger geometric statements, for example:

\begin{proposition} \label{tangencypoint}
Suppose that two circles $c_1, c_2 \in B$ are tangent at a point $x \in \hat{\C}$. Then no other circles from $B$ go through this point. Moreover, if $x$ is not the accumulation point, then it is also a point of tangency for exactly two dual circles $d_1, d_2 \in \hat{B}$, which intersect $c_1, c_2$ orthogonally.
\end{proposition}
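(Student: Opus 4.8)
The plan is to prove the two assertions separately, reducing each to an elementary computation after moving the tangency point to infinity. Throughout I would apply a M\"{o}bius transformation $\varphi$ with $\varphi(x)=\infty$; since $c_1,c_2$ are tangent at $x$, their images $\varphi(c_1),\varphi(c_2)$ are disjoint parallel lines $L_1,L_2$, and the interior of each circle becomes the open half-plane on one side of the corresponding line. For the first assertion, suppose a third base circle $c_3\in B$ passed through $x$. By the first condition of Definition \ref{basedual} the circles $c_3$ and $c_1$ are disjoint or tangent; as they share the point $x$ they must be tangent, necessarily at $x$, so $c_3$ shares the common tangent line of $c_1,c_2$ at $x$. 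Hence $\varphi(c_3)$ is a third line $L_3$ parallel to $L_1,L_2$, and the interiors of $c_1,c_2,c_3$ map to three pairwise disjoint half-planes bounded by parallel lines. Each such half-plane opens in one of only two opposite directions, and two that open the same way are nested, hence not disjoint; by pigeonhole three pairwise disjoint ones are impossible. Thus at most two base circles pass through $x$.

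For the second assertion I would first read off the combinatorics. The tangency of $c_1,c_2$ is an edge $e$ of $G_B$, and by the duality of $G_B$ and $G_{\hat B}$ (the second condition of Definition \ref{basedual}) it corresponds to a dual edge $\hat e$ whose endpoints $d_1,d_2\in\hat B$ are the dual circles of the two faces $f_1,f_2$ of $G_B$ bordering $e$. These faces are distinct, for a single bordering face would make $\hat e$ a loop, contradicting the simplicity of $G_{\hat B}$. The hypothesis that $x$ is not the accumulation point guarantees, via the fourth condition, that only finitely many circles meet a neighborhood of $x$, so that $f_1,f_2$ are genuine faces incident to $x$ and $d_1,d_2$ are well-defined circles meeting this neighborhood. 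Since $c_1$ and $c_2$ both lie on the boundary of each of $f_1,f_2$, the third condition forces each $d_i$ to cross both $c_1$ and $c_2$ orthogonally.

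The geometry then finishes the argument. A generalized circle orthogonal to the parallel lines $L_1,L_2$ cannot be an ordinary circle, since orthogonality to a line places the circle's center on that line and $L_1,L_2$ do not meet; hence $\varphi(d_1),\varphi(d_2)$ are straight lines, each perpendicular to $L_1$ and $L_2$. As lines through $\infty$ their preimages $d_1,d_2$ pass through $x$, and being two distinct lines perpendicular to the same direction, $\varphi(d_1),\varphi(d_2)$ are parallel and meet only at $\infty$, so $d_1,d_2$ are tangent at $x$. Applying the first assertion to the dual configuration $\hat B$, whose roles are symmetric with those of $B$, shows that $d_1,d_2$ are the only dual circles through $x$, giving exactly two; the orthogonality to $c_1,c_2$ is the third condition.

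I expect the main obstacle to be the middle step: justifying rigorously that the dual circles $d_1,d_2$ attached to the faces adjacent to $e$ genuinely pass through $x$, equivalently that a face--vertex incidence forces an orthogonal crossing, which is the converse of the intersection statement in the third condition of Definition \ref{basedual}. This is precisely where the non-accumulation hypothesis is essential, since it rules out the degenerate local pictures in which infinitely many circles crowd the tangency point, the adjacent faces fail to be incident to $x$, or the relevant dual circles accumulate at $x$ rather than passing through it. Establishing this incidence carefully, rather than the concluding parallel-lines computation, is the crux.
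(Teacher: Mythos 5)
Your proof is correct and takes essentially the same route as the paper's: the same combinatorial step (the edge $c_1c_2$ corresponds to a dual edge joining two tangent dual circles), the same normalization sending $x$ to $\infty$ so that $c_1, c_2$ become parallel lines and any circle orthogonal to both is forced to be a perpendicular line tangent to its partner at $\infty$, and the same appeal to the disjoint-interiors property (1) (applied to $B$ and then to $\hat{B}$) for the two ``no further circles through $x$'' claims. The step you single out as the crux---that face--vertex incidence forces an orthogonal crossing, the converse of condition (3)---is not argued in the paper either: its proof asserts the orthogonal intersection of $d_1, d_2$ with $c_1, c_2$ at exactly the same point, implicitly reading conditions (2)--(3) of Definition \ref{basedual} as saying the graph duality is geometrically realized, so your attempt matches the paper's proof in both structure and level of rigor.
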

\begin{proof}
First, there cannot be any additional circle in $B$ tangent to $c_1, c_2$ at $x$ as this would violate the disjoint interiors property (1). Next, suppose that $x$ is not the accumulation point, and consider the edge connecting $c_1, c_2$ in $G_B$. This corresponds to an edge connecting some pair of tangent dual circles $d_1, d_2 \in G_{\hat{B}}$, which intersect $c_1, c_2$ orthogonally. After applying a M\"{o}bius transformation, we may assume that $c_1$ and $c_2$ are parallel horizontal lines which are tangent at the point $x=\infty$. Then it is clear that $d_1, d_2$ must be parallel vertical lines which are tangent at the same point $x$. Again, there cannot be any additional circle in $\hat{B}$ tangent to $d_1, d_2$ at $x$ as this would violate property (1).
\end{proof}

\begin{proposition} \label{ringed}
Each circle in $c \in B$ is orthogonal to at least three dual circles in $\hat{B}$. These dual circles can be labeled by elements of $\Z$ or $\Z/n\Z$ so that consecutive circles are tangent.
\end{proposition}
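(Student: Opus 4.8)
The plan is to work inside the disk bounded by the base circle $c$ and reduce the statement to the structure of non-crossing geodesics there, then match this with the combinatorial duality of the tangency graphs. First I would apply a M\"{o}bius transformation normalizing $c$ to the unit circle, so its interior $D$ becomes the Poincar\'{e} disk. By property (3), every dual circle is either orthogonal to $c$ or has interior disjoint from that of $c$, and the orthogonal ones restrict to complete hyperbolic geodesics of $D$. Property (1) forces these geodesics to be pairwise non-crossing, and a short computation shows that two orthogonal dual circles that are tangent must meet at a point of $\partial D = c$: an interior tangency would force the two geodesics to agree at a point with a common direction, hence to coincide. Thus the dual circles orthogonal to $c$ form a family of pairwise non-crossing geodesics whose only coincidences are shared endpoints on $c$, and the circular order of their endpoints along $c$ induces a cyclic (or linear) order on the family.

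Next I would match this geometric picture with the combinatorial duality. By property (3), the dual circles orthogonal to $c$ are exactly the vertices of $G_{\hat{B}}$ lying on the boundary of the face dual to the vertex $c$ of $G_B$; equivalently they correspond to the faces of $G_B$ meeting $c$. Reading the boundary of this dual face, consecutive orthogonal duals are joined by an edge of $G_{\hat{B}}$, which is dual to an edge of $G_B$ at $c$, i.e. to a point $x$ where $c$ is tangent to a neighbor $c' \in B$. Proposition \ref{tangencypoint} then tells me that at this point (unless it is the accumulation point) the two consecutive duals are themselves tangent and orthogonal to $c$. This shows simultaneously that the cyclic order of the geodesics along $c$ agrees with the order of the vertices around the dual face, and that consecutive circles in this order are tangent, which is the second assertion of the proposition.

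To pin down the index set I would split on whether the accumulation point lies on $c$. If it does not, the points where orthogonal duals cross $c$ cannot accumulate, since any accumulation would be a second accumulation point of $B \cup \hat{B}$, contradicting property (4); hence there are finitely many, the cyclic order closes up, and I obtain a labeling by $\Z/n\Z$. If it does lie on $c$, the tangency points accumulate only there, producing a bi-infinite sequence labeled by $\Z$. For the lower bound of three, the infinite case is immediate, so it remains to exclude $n \le 2$ in the finite case. A degree-one vertex $c$ of $G_B$ would make the dual face bounded by a single edge, forcing a loop in $G_{\hat{B}}$, and a degree-two vertex would make it a digon, forcing two parallel edges; both contradict simplicity of $G_{\hat{B}}$. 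Equivalently, one can argue purely geometrically: each non-accumulation endpoint of an orthogonal geodesic is shared by exactly two such geodesics by Proposition \ref{tangencypoint}, so one or two geodesics would leave some non-accumulation endpoint on a single dual circle, an impossibility given that there is at most one accumulation point.

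I expect the main obstacle to be the bookkeeping in the second and third paragraphs: one must verify that \emph{every} endpoint of an orthogonal geodesic (other than the accumulation point) genuinely arises as a base tangency point, so that Proposition \ref{tangencypoint} applies and pairs the endpoints two at a time, and one must keep the finite ($\Z/n\Z$) and infinite ($\Z$) cases cleanly separated by the location of the single accumulation point. The disk normalization makes ``consecutive circles are tangent'' transparent, but reconciling the geometric cyclic order along $c$ with the combinatorial adjacency around the dual face, uniformly in both cases, is the delicate part of the argument.
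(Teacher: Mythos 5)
Your proposal is correct in substance, but it takes a noticeably heavier, more geometric route than the paper, whose proof is purely combinatorial. The paper argues: the vertex $c$ has degree at least three because a degree-one or degree-two vertex would force a loop or a double edge in the simple dual graph (your final paragraph reproduces exactly this step); the faces incident to $c$ correspond to dual circles via property (3) of Definition \ref{basedual}; these circles are \emph{distinct} because a dual circle orthogonal to $c$ is determined by its two intersection points with $c$; and the cyclic labeling with consecutive tangencies then follows at once from the standard fact that the faces incident to a vertex of a planar graph can be ordered cyclically so that consecutive faces are adjacent---adjacency of two such faces is an edge of $G_{\hat{B}}$, which by the very definition of the tangency graph \emph{is} a tangency of the two dual circles. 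So the paper never needs Proposition \ref{tangencypoint}, the disk normalization, or the non-crossing geodesic picture: you derive the circular order geometrically and then recover tangency of consecutive duals through \ref{tangencypoint}, whereas the paper gets both the order and the tangencies for free from planar duality. What your approach buys is a concrete picture (geodesics orthogonal to $c$, feet paired on the boundary) that makes the $\Z$ versus $\Z/n\Z$ dichotomy and the role of the accumulation point transparent; the paper handles that subtlety only in a remark following the proof, so your third paragraph is in effect a justification of that remark.

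Two loose ends in your write-up are worth flagging. First, when you identify the boundary walk of the dual face with your family of geodesics, you implicitly assume that distinct faces incident to $c$ yield distinct dual circles; without this, a circle could recur in the cyclic order and break the labeling. The paper closes this with its one-line determination argument (two intersection points with $c$ pin down the orthogonal circle), which you should import. Second, your dichotomy is only half proved: you correctly show that if the accumulation point is not on $c$ then the ring is finite (feet cannot accumulate on the compact circle $c$), and the same reasoning gives that an infinite ring forces the accumulation point onto $c$; but the converse you assert---that the accumulation point lying on $c$ \emph{produces} a bi-infinite ring---is stated without argument and is not obvious. Fortunately the proposition permits either index set, so the operative split is simply finite versus infinite, and your argument covers both cases; the unproved implication can be dropped without loss.
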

\begin{proof}
The vertex corresponding to $c$ in $G_{B}$ must be connected to the rest of the graph by at least three edges; if it were connected by two edges, the dual graph would have a double edge, and if it were connected by one edge, then the dual graph would have a loop. Thus the vertex is incident to at least three faces. These faces must correspond to distinct dual circles because each dual circle is uniquely determined by two points of intersection with $c$. Then the rest of the statement follows from the fact that the faces incident to a vertex in a planar graph can be ordered cyclically so that any two consecutive faces are adjacent. 
\end{proof}
We will describe the situation of Proposition \ref{ringed} by saying that $c$ is ringed by circles from $\hat{B}$. One subtlety in the proof is the possibility that $c$ could be ringed by infinitely many circles. This can happen if the unique accumulation point lies on $c$, and in this case, the orthogonal circles can be labeled by the integers so that any two consecutive circles are tangent. 

\begin{proposition} \label{cover}
The circles in $B$, $\hat{B}$, and their interiors cover all points in $\hat{\C}$ other than the accumulation point.
\end{proposition}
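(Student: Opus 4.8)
The plan is to show that any point $p \neq a$ (where $a$ is the unique accumulation point, if it exists) that lies neither on a base circle nor in the interior of a base disk must lie in the interior of some dual disk; by the symmetric roles of $B$ and $\hat{B}$ this suffices. Since $p$ is not an accumulation point of $B \cup \hat{B}$, some neighborhood of $p$ meets only finitely many circles, so near $p$ the union of the closed base disks is a finite union of pairwise disjoint closed disks. Hence $p$ lies in a connected component $W$ of the open set $\Omega = \hat{\C} \setminus \bigcup_{c \in B}\overline{\mathrm{int}(c)}$, a ``gap'' of the base configuration, and it is this gap that I will locate inside a dual circle.

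First I would analyze the boundary of such a gap $W$. Its boundary is a union of arcs of base circles, and two consecutive arcs can meet only where the corresponding base circles are tangent, since disjointness of the base interiors in Definition \ref{basedual}(1) forbids transverse crossings. Thus $W$ is a curvilinear polygon whose vertices are tangency points $x_1, x_2, \dots$ and whose sides are arcs of base circles $c_1, c_2, \dots$ that ring $W$ consecutively, in the sense of Proposition \ref{ringed}. The edges of $G_B$ recording the tangencies $x_i$ all lie on the boundary of a single face of the embedded tangency graph, and by the duality in Definition \ref{basedual}(2) this face corresponds to a unique dual circle $d$. By Definition \ref{basedual}(3), $d$ is orthogonal to every $c_i$, and by Proposition \ref{tangencypoint} each tangency point $x_i$ of $c_i, c_{i+1}$ lies on $d$, since $d$ is one of the two dual circles meeting at $x_i$.

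Next I would pin down the geometry by a M\"{o}bius normalization sending $d$ to the unit circle. Each ringing base circle $c_i$ then becomes a circle orthogonal to the unit circle through the two boundary points $x_{i-1}, x_i$; inside the disk it is a hyperbolic geodesic, and the base disks appear as the lunes cut off toward the boundary arcs. The gap $W$ is then exactly the open ideal polygon bounded by these geodesics, which is contained in the open unit disk, and since $c_i$ and $c_{i+1}$ are externally tangent at each ideal vertex $x_i$, the gap does not leak outside the unit disk. It remains to check that the unit disk is $\mathrm{int}(d)$ rather than its complement: dual circles belonging to faces adjacent across an edge are tangent at the corresponding tangency point, and the disjointness of dual interiors in Definition \ref{basedual}(1) forces each interior to be the bounded disk side. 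Hence $\mathrm{int}(d)$ is the unit disk and $W \subseteq \mathrm{int}(d)$, so $p$ is covered.

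The main obstacle is the bookkeeping in the infinite setting. When the accumulation point $a$ lies on $\overline{W}$, the gap may be ringed by infinitely many base circles and the associated dual circle may degenerate to a line; I would handle this with the integer-indexed version of Proposition \ref{ringed}, noting that the normalization still presents $W$ as an ideal polygon inside the (possibly half-plane) interior of $d$, and that excluding the single point $a$ is precisely what lets every $p \in W$ be covered. A secondary point to verify is that no base circle lies strictly inside $\mathrm{int}(d)$: such a circle would be disjoint from $d$ yet have interior contained in $\mathrm{int}(d)$, contradicting Definition \ref{basedual}(3). This guarantees that $W$ is the entire gap inside $d$, so the local picture is complete and the covering conclusion follows.
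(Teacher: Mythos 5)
Your proposal is correct, and it is essentially the paper's own argument run in the contrapositive direction: both proofs hinge on the same two ingredients, namely the face--dual-circle correspondence and Proposition \ref{tangencypoint}, which places the tangency points of the ring on the dual circle $d$. The paper fixes a face of the embedded graph $G_B$ (normalized so the face avoids $\infty$ and no surrounding circle is a line), views it as a polygon with vertices at the circle centers, and observes that $d$ is inscribed in this polygon, so the face is covered by $d$, its interior, and pieces lying in the base interiors; you instead start from a hypothetical uncovered point, form the gap $W$ in the complement of the closed base disks, and normalize $d$ to the unit circle so the ring circles become hyperbolic geodesics and $W$ becomes the ideal polygon inside $\mathrm{int}(d)$. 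Your version handles the infinite-ring and degenerate-line cases more smoothly and makes explicit two points the paper leaves implicit (the orientation of $d$, and that no base circle intrudes into the face region), at the cost of having to justify the gap-to-face identification, which the paper sidesteps by working with combinatorial faces of the embedding directly. One caution: your stated reason for the orientation of $d$ --- that disjointness of tangent dual interiors ``forces each interior to be the bounded disk side'' --- is not M\"{o}bius-invariant as phrased and tacitly assumes you already know on which side of $d$ the adjacent dual circle lies; the cleaner route is the one you give afterwards, applying Definition \ref{basedual}(3) to $d$ and any base circle not in the ring (such a circle exists since $G_B$ is $3$-connected with more vertices than any single face), which forces $\mathrm{int}(d)$ to be the unit-disk side and simultaneously rules out intruding base circles, removing the slight circularity between those two steps.
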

\noindent The accumulation point may or may not be covered, depending on whether or not it lies on a circle.
\begin{proof}
It suffices to show that all points on one face in the planar embedding of $G_B$ are covered. After a M\"{o}bius transformation, we may assume that this face does not contain the point $\infty$, and that none of the generalized circles in $B$ surrounding it are lines. By Proposition \ref{ringed}, the face is a simply connected polygon (possibly with infinitely many sides), with vertices at the centers of circles in $B$. Each edge goes through a point of tangency between two circles in $B$, and is orthogonal to both circles. Then by Proposition \ref{tangencypoint}, the dual circle in $\hat{B}$ corresponding to the face is tangent to each edge, i.e. it is inscribed in the polygon. Removing the dual circle and its interior leaves one connected component for each vertex, and each connected component is contained in the interior of the corresponding circle. 
\end{proof}

A graph is is said to be $n$-connected if after removing any $n-1$ vertices and their adjacent edges, the graph remains connected. It is said to be $n$-edge connected if after removing any $n-1$ edges, the graph remains connected. Because $G_{B}$ and $G_{\hat{B}}$ are a pair of dual simple planar graphs, they are necessarily 3-edge connected. If removing an edge could disconnect $G_B$, then $G_{\hat{B}}$ would have to contain a loop, and if removing two edges could disconnect $G_B$, then $G_{\hat{B}}$ would have to contain a double edge. In fact, we can make an even stronger connectedness statement:

\begin{proposition} \label{3Connected}
The tangency graphs $G_{B}$ and $G_{\hat{B}}$ are $3$-connected.
\end{proposition}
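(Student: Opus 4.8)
The plan is to argue by contradiction, ruling out vertex cuts of size one and two, with the essential input being geometric rather than purely combinatorial. This is necessary: the preceding paragraph only extracts $3$-edge-connectivity, and one can exhibit simple, $3$-edge-connected planar graphs with simple duals and minimum degree three that nonetheless have a $2$-separator. (Glue two copies of $K_4$ minus an edge along the two endpoints $u,v$ of the deleted edges; then $\{u,v\}$ separates the two copies, yet all of the above properties hold.) So the orthogonality and disjoint-interior conditions of Definition \ref{basedual} must be used in an essential way.

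The key lemma I would isolate first bounds the number of common orthogonal circles: if two circles $c,c'\in B$ are disjoint then at most one circle of $\hat B$ is orthogonal to both, and if they are tangent then at most two are. To prove it I would normalize by a M\"obius transformation. In the disjoint case, send $c,c'$ to concentric circles; a generalized circle orthogonal to two distinct concentric circles must be a line through their common center, and two distinct such lines cross transversally, contradicting property (1) for $\hat B$. In the tangent case, send the tangency point to $\infty$, so that $c,c'$ become parallel lines; a circle orthogonal to both must be a line perpendicular to them, and any three such lines are parallel, so they cannot have pairwise disjoint interiors. Hence at most two lie in $\hat B$, extending Proposition \ref{tangencypoint}. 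Note that this argument relies only on the global property (1), so it is insensitive to the accumulation point.

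Next I would show $G_B$ is $2$-connected. By Proposition \ref{cover}, each face of $G_B$ is realized as a simply connected polygon whose vertices are centers of the surrounding circles, with the dual circle inscribed and tangent to every edge. If the boundary walk of a face repeated a vertex $w$, then the polygon would be pinched at the center of $c_w$, and a single inscribed circle could not be tangent to the edges meeting at both occurrences, since these lie in different wedges at the pinch. Thus every face boundary is a simple cycle, which for a connected plane graph with at least three vertices forces $2$-connectivity; in particular there are no cut vertices.

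Finally, assuming $2$-connectivity, I would rule out a $2$-separator $\{u,v\}$. Using the standard structure of a separation pair in a plane graph---the edges at $u$, and at $v$, to the two sides $A,C$ of the separation form contiguous arcs in the rotation system, leaving exactly two \emph{transition} faces, each incident to both $u$ and $v$---I obtain at least two distinct faces incident to both $u$ and $v$. If $uv\notin E(G_B)$, then $c_u,c_v$ are disjoint and the two transition dual circles give two common orthogonals, contradicting the lemma. If $uv\in E(G_B)$, then $c_u,c_v$ are tangent, and the two faces bordering the edge $uv$ together with the transition face give three distinct common orthogonals, again contradicting the lemma. I expect the main obstacle to be making this separation-pair combinatorics fully rigorous in the possibly infinite setting, where a face may have infinitely many sides and a vertex may be incident to infinitely many faces near the accumulation point; the geometric lemma and property (1) are unaffected, but the contiguity of the arcs and the existence and distinctness of the transition faces need to be justified carefully in that regime.
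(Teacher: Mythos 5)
Your proposal shares its geometric kernel with the paper's proof but wraps it in a genuinely different combinatorial argument, and the difficulty you flag at the end is a real gap rather than a formality. The kernel first: your lemma bounding common orthogonals (at most one for a disjoint pair of circles in $B$, at most two for a tangent pair) is precisely the paper's key input. For two non-adjacent vertices the paper normalizes $c_1,c_2$ to concentric circles and observes that any common orthogonal is a line through the center, so property (1) allows at most one --- your disjoint case verbatim; your tangent case via parallel lines is the content of Proposition \ref{tangencypoint}. Where you diverge is the superstructure. The paper never touches rotation systems or separation pairs: it deletes one or two vertices and shows directly that the union of the faces incident to the deleted vertices, glued along shared edges and at the vertices, remains a simply connected face, then deduces connectivity topologically. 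You instead prove face boundaries are simple, infer 2-connectivity, and then count transition faces at a putative 2-separator to produce too many common orthogonals. In the finite case both routes work; yours is more standard plane-graph theory, and your example showing that 3-edge-connectivity plus simplicity of the dual does not imply 3-connectivity is correct and a nice demonstration that the geometry must be used.

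The gap is the infinite case, and it is exactly where you predicted, but it is worse than a matter of ``justifying contiguity carefully'': the relevant claims are \emph{false as stated} in that regime. If the accumulation point lies on the circle $c_u$, then by the remark following Proposition \ref{ringed} the ring of faces around $u$ is indexed by $\Z$, so the rotation at $u$ has no cyclic structure and one of your two transition corners simply does not exist. Dually, a face may have infinitely many sides, in which case its boundary walk is a bi-infinite simple path rather than a cycle; then ``every face boundary is a simple cycle'' fails, the inference to 2-connectivity breaks, and the standard separation-pair step collapses too: the argument that a transition face's boundary, leaving $u$ into the two sides $A$ and $C$, ``must return through $v$'' fails because the two ends of a bi-infinite boundary walk can each escape toward the accumulation point, one on each side of the separation, without ever meeting $v$. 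So in the presence of an accumulation point on a circle, your steps 2 and 3 both need genuinely new arguments, not just careful bookkeeping. The paper's gluing formulation is designed to be insensitive to this: an infinite chain of simply connected faces glued along consecutive shared edges is an increasing union of simply connected open sets, hence simply connected, and no ``last edge'' of a rotation is ever invoked. A workable patch for your route: observe that if the accumulation point lies on no circle of $B\cup\hat{B}$, then every ring, every vertex rotation, and every face boundary is finite (an infinite ring forces the accumulation point onto the ringed circle), and your argument then goes through as in the finite case; the remaining case, where the accumulation point lies on some circle, must be handled separately --- and that is precisely the case the paper's face-gluing absorbs without extra effort.
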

\begin{proof}
It suffices to show that if one or two vertices and their adjacent edges are removed from $G_{B}$, then all faces in the planar embedding of $G_{B}$ remain simply connected. This implies that $G_{B}$ remains connected. 

If a single vertex $c_1$ is removed, then the faces incident to that vertex are all identified. By Proposition \ref{ringed}, these faces correspond to the ring of dual circles around $c_1$. Because these faces are all distinct and simply connected, gluing them at the vertex and along their common edges results in a larger simply connected face.

If two non-adjacent vertices $c_1, c_2$ are removed, then at each vertex the incident faces are identified. If $c_1, c_2$ have no face in common, then the argument is exactly the same as above. If they have a face in common, there can only be one such face. Indeed, after applying a M\"{o}bius transformation, we may assume that $c_1, c_2$ are concentric circles centered at the origin. Any dual circle orthogonal to both must be a line through the origin. By the disjoint interiors property (1), $\hat{B}$ can contain at most one such line. In this case, removing $c_1$ and $c_2$ creates one new larger face, the union of all faces incident to $c_1$ or $c_2$. Because the union of faces incident to $c_1$ is simply connected, the union of faces incident to $c_2$ is simply connected, and they overlap in a unique simply connected face, the new face must be simply connected.

Finally, if two adjacent vertices $c_1, c_2$ are removed, by Proposition \ref{tangencypoint}, there are exactly two faces incident to these two vertices, meeting along the edge from $c_1$ to $c_2$. The union of these two faces is simply connected. Removing $c_1$ and $c_2$ creates one new larger face, the union of all faces incident to $c_1$ or $c_2$. Because the union of faces incident to $c_1$ is simply connected, the union of faces incident to $c_2$ is simply connected, and they overlap in a simply connected union of two faces, the new face must be simply connected. 
\end{proof}

By the compactness of $\hat{\C}$, $B$ is finite if an only if it has no accumulation point. In this case $G_B$ is a finite 3-connected simple planar graph, so by Steinitz's theorem, it is the graph of a polyhedron.

One further property we might ask for in base and dual configurations is periodicity:
\begin{definition} 
For $n=1, \, 2$, $B$ is $n$-periodic, i.e. periodic under an $n$-dimensional lattice, if there exist $v_1, \dots, v_n \in \C$, linearly independent over $\R$, such that $B + v_1 = \dots = B + v_n = B$.
\end{definition}
\noindent If $B$ is $1$-periodic or $2$-periodic, it necessarily has an accumulation point at $\infty$. The tangency graph of a $2$-periodic $B$ is a $2$-periodic tiling of the plane. 

We are now ready to define the class of circle packings that we will study. For any circle $d$, let $\sigma_d$ denote the reflection across $d$, a M\"{o}bius transformation. 
\begin{definition} \label{packing}
The packing $\mathscr{P}$ is the orbit of $B$ under the group generated by reflections $\sigma_d$ across circles $d \in \hat{B}$. The dual packing $\hat{\mathscr{P}}$ is the orbit of $\hat{B}$ under the same group.

The superpacking is the orbit of $B$ under the group generated by reflections across circles in $B$ and $\hat{B}.$
\end{definition}

Note that under this definition, the dual packing and superpacking contain two oppositely-oriented copies of each circle. It is possible to assign a single orientation to circles in the dual packing and the superpacking in a consistent way, but since we will not use the orientations of these circles in an important way, we do not pursue this. We will only be concerned with the orientations of the circles in $\mathscr{P}$.

For each packing $\mathscr{P}$, we define the following symmetry groups:
\begin{enumerate}
    \item $\Gamma=\textsc{Sym}(\mathscr{P},\hat{\mathscr{P}})$: the group of M\"{o}bius transformations that preserve both the packing and the dual packing;
    \item $\Gamma_1=\langle \sigma_d \, : \, d \in \hat{B} \rangle$:  the group generated by reflections across the dual circles;
    \item $\Gamma_2=\textsc{Sym}(B,\hat{B})$: the group of M\"{o}bius transformations that preserve both the base configuration and the dual configuration.\end{enumerate}

Some fundamental geometric properties of $\mathscr{P}$ can be deduced directly from the definition.

\begin{proposition}\label{dualOrInside}
Every circle in $\mathscr{P}$ is either a base circle in $B$ or inside some dual circle, and every circle in $\hat{\mathscr{P}}$ is either a dual circle in $\hat{B}$ or inside some dual circle.
\end{proposition}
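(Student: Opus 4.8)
The plan is to argue by induction on word length in $\Gamma_1$, in the style of a ping-pong argument. Every circle in $\mathscr{P}$ has the form $g(c)$ with $c \in B$ and $g \in \Gamma_1$, and every circle in $\hat{\mathscr{P}}$ has the form $g(d_0)$ with $d_0 \in \hat{B}$. Writing $g$ as a reduced word $\sigma_{d_1}\cdots\sigma_{d_k}$ in the generators (one with no two consecutive letters equal, obtained by cancelling $\sigma_d\sigma_d = \mathrm{id}$), I would prove by induction on $k$ the sharper statement: $g(c)$ is either a base circle in $B$ or lies inside $d_1$ (and likewise $g(d_0)$ is either a dual circle in $\hat{B}$ or lies inside $d_1$). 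Here ``inside $d$'' means contained in the interior region of the dual circle $d$, and the engine of the whole argument is that each reflection $\sigma_d$ interchanges the interior and exterior of $d$.

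First I would record the level-zero geometry from Definition \ref{basedual}. For a base circle $c \in B$ and a dual circle $d \in \hat{B}$, property (3) forces exactly one of two alternatives: either $c \perp d$, in which case $\sigma_d$ fixes $c$ setwise (a reflection across a circle orthogonal to $c$ preserves $c$ and its interior); or the interiors of $c$ and $d$ are disjoint, so $c$ lies in the exterior of $d$ and $\sigma_d(c)$ lies inside $d$. For two dual circles $d_0, d \in \hat{B}$, property (1) gives disjoint interiors, so either $d_0 = d$ and $\sigma_d$ fixes $d_0$, or $d_0$ lies in the exterior of $d$ and $\sigma_d(d_0)$ lies inside $d$. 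The third and final geometric input is that any circle already nested inside a dual circle $d_2 \ne d_1$ lies in the exterior of $d_1$, again because dual circles have disjoint interiors; reflecting it across $d_1$ therefore moves it inside $d_1$.

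With these facts the induction is routine. Writing $g = \sigma_{d_1} h$ with $h = \sigma_{d_2}\cdots\sigma_{d_k}$ and $d_1 \ne d_2$, the inductive hypothesis says $h(c)$ is either a base circle or inside $d_2$. If $h(c)$ is a base circle, applying $\sigma_{d_1}$ and the base-circle dichotomy above yields either a base circle (when $h(c) \perp d_1$) or a circle inside $d_1$. If instead $h(c)$ lies inside $d_2$, then since $d_2 \ne d_1$ it lies in the exterior of $d_1$, so $\sigma_{d_1}$ carries it inside $d_1$. Either way $g(c)$ is a base circle or inside $d_1$, completing the step; the dual-circle statement follows by the identical argument with $\hat{B}$ in place of $B$.

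The point requiring the most care is the interplay between the orthogonality alternative and the use of reduced words. The orthogonality case is exactly why the conclusion must be phrased as a dichotomy (``base circle or inside a dual circle'') rather than a clean nesting: a nontrivial product of reflections in dual circles orthogonal to $c$ can fix $c$, returning it to $B$. Correspondingly, in the nesting case I rely on $d_1 \ne d_2$, so the word must be reduced before the first letter is read off; this is harmless since $g(c)$ depends only on the group element $g$, not on the chosen word. I would also take care to confirm that ``inside $d$'' is preserved under the induction in the strong sense that the whole circle together with its interior disk sits in the open interior of $d$, so that the nesting hypothesis feeds correctly into the next reflection.
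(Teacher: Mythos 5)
Your proposal is correct and takes essentially the same route as the paper's proof: both run the identical ping-pong induction along the word, using property (3) for the orthogonal-versus-disjoint dichotomy when a base circle meets a dual circle and property (1) to pass the nesting from one dual circle to the next, the only cosmetic difference being that the paper normalizes up front by dropping trailing reflections orthogonal to $c$ (so the non-base case ends cleanly inside $d_1$) while you carry the ``base circle or inside $d_1$'' dichotomy through the induction. One small correction: your closing plan to verify containment in the \emph{open} interior of $d$ cannot succeed for the dual-packing half, since two dual circles may be tangent and the reflected circle then touches the boundary at the tangency point; but the closed convention the paper adopts immediately after the proposition (``inside'' meaning contained in the union of the circle and its interior) is exactly what your inductive step uses, so nothing else in your argument needs to change.
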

Note that when we say circle $c_1$ is inside circle $c_2$, we only mean that $c_1$ is contained in the union of $c_2$ and its interior, not necessarily that the two interiors are nested.

\begin{proof}
By definition, each circle in $\mathscr{P}$ is in $B$ or it is $\sigma_{d_1}\cdots\sigma_{d_k}(c)$ for some $c \in B$, $d_1, \ldots d_k \in \hat{B}$, with consecutive $d_i$ distinct. We may assume that $c$ is not orthogonal to $d_k$, because then $d_k$ could be dropped from this expression. Then by property (3), $c$ is outside $d_k$, so $\sigma_{d_k}(c)$ is inside $d_k$. By property (1), this implies that $\sigma_{d_k}(c)$ is outside $d_{k-1}$, so $\sigma_{d_{k-1}}\sigma_{d_k}(c)$ is inside $d_{k-1}$. Repeating the argument inductively, we see that $\sigma_{d_1}\cdots\sigma_{d_k}(d)$ is inside $d_1$. The proof of the second statement is similar.
\end{proof}

\begin{proposition} \label{disjointinteriors}
The circles in $\mathscr{P}$ are pairwise disjoint or tangent, with disjoint interiors.
\end{proposition}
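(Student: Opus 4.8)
The plan is to induct on word length. By Definition \ref{packing}, every circle in $\mathscr{P}$ has the form $g(b)$ with $b\in B$ and $g=\sigma_{d_1}\cdots\sigma_{d_m}\in\Gamma_1$; after cancelling repeated adjacent letters (using $\sigma_d^2=1$) and deleting any trailing letter $\sigma_{d_m}$ with $b$ orthogonal to $d_m$ (which fixes $b$ and shortens the word), we may represent each packing circle by a word of minimal total length. Write $\ell(c)$ for this minimal length. I will prove by strong induction on $\ell(c_1)+\ell(c_2)$ that any two distinct circles $c_1,c_2\in\mathscr{P}$ have disjoint interiors. This suffices: two circles whose interiors are disjoint cannot cross transversally, so they are automatically disjoint or tangent, giving the full statement.

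The geometric input is a refinement of Proposition \ref{dualOrInside} that tracks orientations. I claim that if $g=\sigma_{d_1}\cdots\sigma_{d_m}$ with $m\ge 1$, consecutive letters distinct, and $b$ not orthogonal to $d_m$, then $\mathrm{int}(g(b))\subseteq\mathrm{int}(d_1)$. Indeed, property (3) forces $\mathrm{int}(b)\subseteq\mathrm{ext}(d_m)$, so, since $\sigma_{d_m}$ swaps the interior and exterior of $d_m$ and the orbit $\mathscr{P}=\Gamma_1 B$ carries orientations equivariantly, $\mathrm{int}(\sigma_{d_m}b)=\sigma_{d_m}(\mathrm{int}(b))\subseteq\mathrm{int}(d_m)$. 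Because $d_{m-1}\neq d_m$, property (1) gives $\mathrm{int}(d_m)\subseteq\mathrm{ext}(d_{m-1})$, and reflecting once more places the interior inside $d_{m-1}$. Iterating yields $\mathrm{int}(g(b))\subseteq\mathrm{int}(d_1)$. Minimality of the representation is exactly what guarantees $b\not\perp d_m$, so this lemma always applies to our words.

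With the lemma available, write $c_1=g_1(b_1)$, $c_2=g_2(b_2)$ in minimal form. If both words are trivial, then $c_1,c_2\in B$ and property (1) finishes. Otherwise suppose $c_2$ has first letter $d$. In the two ``generic'' situations the interiors are disjoint immediately: (i) if $c_1=b_1\in B$ and $b_1$ is not orthogonal to $d$, then by property (3) $\mathrm{int}(b_1)\cap\mathrm{int}(d)=\emptyset$, and since $\mathrm{int}(c_2)\subseteq\mathrm{int}(d)$ we get $\mathrm{int}(b_1)\cap\mathrm{int}(c_2)=\emptyset$; (ii) if $g_1,g_2$ are both nonempty with distinct first letters $d\neq e$, then $\mathrm{int}(c_1)\subseteq\mathrm{int}(d)$ and $\mathrm{int}(c_2)\subseteq\mathrm{int}(e)$, which are disjoint by property (1).

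The remaining case is when the two circles ``agree at the top'': either $c_1=b_1\in B$ is orthogonal to the first letter $d$ of $c_2$, or $g_1$ and $g_2$ share the first letter $d$. This is the one delicate point, since a base circle orthogonal to $d$ does meet $\mathrm{int}(d)$, so naive nesting fails. The remedy is to apply $\sigma_d$: being an element of $\Gamma_1$, it preserves $\mathscr{P}$ together with all orientations, hence sends interiors to interiors and preserves disjointness, so it suffices to prove the claim for $\sigma_d(c_1),\sigma_d(c_2)$. Now $\sigma_d$ deletes the leading $\sigma_d$ from each word in which it occurs, and fixes $b_1$ when $b_1\perp d$; in either configuration the total length strictly decreases, so the induction hypothesis applies. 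I expect this reflect-and-reduce step, in tandem with the observation that minimal representations carry no trailing orthogonal letter (so the nesting lemma is always in force), to be the main obstacle requiring care; the rest is bookkeeping with properties (1) and (3).
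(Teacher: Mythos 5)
Your proposal is correct and takes essentially the same route as the paper's proof: your nesting lemma (interiors of reflected circles lie inside the leading dual circle, via properties (3) and (1) of Definition \ref{basedual}) is exactly the paper's mechanism, and your reflect-and-reduce step in the shared-first-letter/orthogonal case is the paper's inversion across $d_1$. The only difference is organizational---you run a strong induction on total minimal word length where the paper normalizes one circle into $B$ and argues by descent to a contradiction---which amounts to the same shortening argument.
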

\begin{proof}
Suppose that we have two circles in $\mathscr{P}$ with overlapping interiors. After applying a M\"{o}bius transformation, we may assume that one of the circles, $c_1$ is in $B$.  We may write the other circle as $\sigma_{d_1} \cdots \sigma_{d_k}(c_2)$ for $c_2 \in B$, $d_1, \ldots d_k \in \hat{B}$, with consecutive $d_i$ distinct. If $d_k$ is orthogonal to $c_2$, then we can shorten this expression to $\sigma_{d_1} \cdots \sigma_{d_{k-1}}(c_2)$. Otherwise, by property (3) of Definition \ref{basedual}, $d_k$ and $c_2$ have disjoint interiors, and after inversion, the interior of $\sigma_{d_k}(c_2)$ is contained inside the interior of $d_k$. After the remaining inversions, by property (1), the interior of $\sigma_{d_1} \cdots \sigma_{d_k}(c_2)$ is contained inside the interior of $d_1$. By property (3), in order for $c_1$ to intersect $\sigma_{d_1} \cdots \sigma_{d_k}(c_2)$, $c_1$ must be orthogonal to $d_1$. Then inverting across $d_1$, we find that $c_1$ also intersects $\sigma_{d_2} \cdots \sigma_{d_k}(c_2)$. In either case, we have shortened the string $\sigma_{d_1} \cdots \sigma_{d_k}$. Repeating this process, we eventually find two circles $c_1, c_2 \in B$ with overlapping interiors, contradicting property (1).
\end{proof}

\begin{proposition} \label{dense}
The interiors of the circles in $\mathscr{P}$ are dense in $\hat{\C}$.
\end{proposition}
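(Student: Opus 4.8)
The plan is to show the equivalent statement that $\Lambda=\hat{\C}\setminus\overline{U}$ is empty, where $U=\bigcup_{c\in\mathscr{P}}\mathrm{int}(c)$ is the union of interiors of packing circles. Two preliminary observations organize the argument. First, since $\mathscr{P}$ is the $\Gamma_1$-orbit of $B$ (Definition \ref{packing}) and $\Gamma_1$ acts on oriented circles, every $g\in\Gamma_1$ permutes the interiors of circles of $\mathscr{P}$, so $gU=U$ and $g\Lambda=\Lambda$; in particular $\sigma_{d}\in\Gamma_1$ for every dual-packing circle $d$, being a conjugate of a generator. Second, the interior of every base circle lies in $U$, hence the closed disk of every circle of $\mathscr{P}$ lies in $\overline{U}$ and is therefore disjoint from $\Lambda$.

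I would then run a descent. Suppose $\Lambda\neq\emptyset$ and fix a round ball $V\subseteq\Lambda$. By Proposition \ref{cover}, $\hat{\C}$ minus the accumulation point is covered by the closed disks of $B\cup\hat{B}$; as $V$ meets no base disk, $V$ is covered by the closed disks of $\hat{B}$. These have pairwise disjoint interiors by property (1) of Definition \ref{basedual}, and a connected open set meeting no filled region cannot surround a tangency point of two such disks, so after shrinking $V$ it lies in the interior of a single dual disk $D_{d_{0}}$ with $d_{0}\in\hat{B}$. Now reflect across $d_{0}$: applying Proposition \ref{cover} to the configuration $\sigma_{d_{0}}(B\cup\hat{B})$ covers $\mathrm{int}(D_{d_{0}})$ by the interiors of the reflected base circles $\sigma_{d_{0}}(c)$ (which lie in $U$) together with the disks of the reflected dual circles $\sigma_{d_{0}}(e)$, $e\in\hat{B}\setminus\{d_{0}\}$; by property (1) each such $e$ has interior disjoint from $D_{d_{0}}$, so $\sigma_{d_{0}}(e)$ lies strictly inside $D_{d_{0}}$. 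Since $V\subseteq\Lambda$ avoids the filled part, the same straddling argument places $V$ inside one of these holes $D_{d_{1}}$. Iterating yields a nested chain of dual-packing disks $D_{d_{0}}\supseteq D_{d_{1}}\supseteq\cdots$, all containing $V$, in which each $d_{i+1}$ is the reflection across $d_{i}$ of a dual-packing circle $e_{i}'$ whose interior is disjoint from $D_{d_{i}}$.

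The crux is that these holes shrink geometrically, and this is the step I expect to carry the argument. If a circle $C$ of radius $s$ has interior disjoint from a disk $D$ of radius $r$, its center lies at distance $\ell\geq r+s$ from the center of $D$, and reflecting $C$ across $\partial D$ produces a circle inside $D$ of radius $\dfrac{r^{2}s}{\ell^{2}-s^{2}}\leq\dfrac{rs}{r+2s}<\dfrac{r}{2}$, the value $r/2$ being approached only as $C$ degenerates to a tangent line. Applying this to $e_{i}'$ and $d_{i}$ gives $\mathrm{rad}(D_{d_{i+1}})\leq\tfrac12\,\mathrm{rad}(D_{d_{i}})$, so $\mathrm{rad}(D_{d_{i}})\to 0$. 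But $V\subseteq D_{d_{i}}$ forces $\mathrm{rad}(V)\leq\mathrm{rad}(D_{d_{i}})$ for all $i$, which is impossible since $\mathrm{rad}(V)>0$. Hence $\Lambda=\emptyset$. The accumulation point is itself a limit of circles of $\mathscr{P}$ and so lies in $\overline{U}$, completing the proof. The main obstacle is getting the descent bookkeeping exactly right, namely verifying that no dual circle is orthogonal to $d_{i}$ (which holds because dual circles have disjoint interiors and orthogonal disks overlap), so that every other dual circle reflects to a strictly smaller interior hole and the uniform factor $\tfrac12$ genuinely applies at each stage.
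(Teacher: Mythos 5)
Your overall strategy---trap a hypothetical ball $V\subseteq\hat{\C}\setminus\overline{U}$ in a nested chain of dual-packing disks and force their radii to zero---is the same descent the paper runs (the paper does it pointwise, producing the nested sequence $\tilde{d}_1, \tilde{d}_2,\ldots$ and showing it converges to $x$). But your quantitative step contains a genuine gap. The claim that every hole inside $D_{d_i}$ is the reflection across $d_i$ of a circle bounding a \emph{bounded} disk of radius $s$ whose center lies at distance $\ell\geq r+s$ from that of $D_{d_i}$ is false for exactly one hole: the back-reflection $\sigma_{d_i}(d_{i-1})$. At stage $i+1$ the circles being reflected are elements of $\hat{\mathscr{P}}$, not of $\hat{B}$, and among them is $d_{i-1}$ itself with reversed orientation; its ``interior'' is the unbounded complement of $D_{d_{i-1}}$, and $D_{d_i}$ sits \emph{inside} the bounded region it excludes. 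There your inversion estimate fails: if $D_{d_i}$ has radius $r$ and is internally tangent to $d_{i-1}$ of radius $s$, then $\sigma_{d_i}(d_{i-1})$ has radius $\frac{rs}{2s-r}=\frac{r}{2-r/s}>\frac{r}{2}$, and as the chain alternates between the two circles the successive ratios tend to $1$, with no uniform bound below $1$. This is precisely the case $d_{k+1}=d_{k-1}$ that the paper's proof isolates and handles separately: after normalization the nested radii in the tangent configuration are $\frac{1}{1+k(1/r-1)}$, decaying only harmonically, not geometrically. Your closing remark that ``no dual circle is orthogonal to $d_i$'' checks the same-level circles but not the enclosing circle $d_{i-1}$, which is where the missed case lives.

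The good news is that your contradiction only needs the radii to tend to $0$, not geometric decay, so the argument is repairable exactly along the paper's lines: if $d_{k+1}\neq d_{k-1}$ at infinitely many steps, your factor $\tfrac{1}{2}$ genuinely applies at those steps (both the reflected circle and $d_i$ then lie inside $D_{d_{i-1}}$, so the image avoids the center of $D_{d_i}$); if instead the chain is eventually alternating, normalize the two circles and compute directly---disjoint case: concentric after a M\"{o}bius transformation, radii $r^k$; tangent case: the harmonic formula above. With that case analysis substituted for the uniform halving claim, your proof goes through. The rest of your setup---covering $V$ by dual disks via Proposition \ref{cover}, the connectedness argument locating $V$ in a single hole (tangency points of dual disks lie in $\overline{U}$ by Proposition \ref{tangencypoint}, so $V$ avoids them), and the reduction showing closed disks of packing circles lie in $\overline{U}$---is sound and consistent with the paper's framework.
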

\begin{proof}
For $x\in \hat{\C}$, by Proposition \ref{cover}, either $x$ is the accumulation point, $x$ is in some circle in $B$, or $x$ is in some circle in $\hat{B}$. It follows that either $x$ is in the closure of the interiors of the circles in $B$ or $x$ is in the interior of some circle in $\hat{B}$. If $x$ is in the interior of a dual circle $d_1$, reflect it across $d_1$. Then either $\sigma_{d_1}(x)$ is in the closure of the interiors of the circles in $B$, which means that $x$ is in the closure of the interiors of the circles in $\sigma_{d_1}(B)$, or $\sigma_{d_1}(x)$ is in the interior of some circle $d_2\in\hat{B}$, which means $x$ is in the interior of $\sigma_{d_1}(d_2)$. In the latter case, reflect $\sigma_{d_1}(x)$ across $d_2$ and repeat the process. If, at some step, we find that $x$ is in the closure of the interiors of the circles in $\sigma_{d_1}\cdots\sigma_{d_k}(B)$, then since these are circles in $\mathscr{P}$, $x$ is in the closure of the interiors of the circles in $\mathscr{P}$ as desired. 

Otherwise, this construction produces an infinite sequence
$$\tilde{d}_1 = d_1, \, \tilde{d}_2 = \sigma_{d_1}(d_2), \, \tilde{d}_3=\sigma_{d_1}\sigma_{d_2}(d_3), \ldots$$
with consecutive $d_i$ distinct, of nested circles in $\hat{\mathscr{P}}$ whose interiors contain $x$. We will show that these circles converge to $x$ in $\hat{\C}$, in the sense that they eventually lie within any open neighborhood of $x$. After applying a M\"{o}bius transformation, we may assume that $\tilde{d}_1$ is not a line and is oriented inward so that its interior does not contain $\infty$. Since the circles $\tilde{d}_k$ are nested, they all have these properties.

We have that
$$\tilde{d}_{k+1} =\sigma_{\tilde{d}_{k}}(\sigma_{d_1}\cdots \sigma_{d_{k-1}}(d_{k+1}))$$
by Lemma \ref{image_of_dual} below. If $d_{k+1} \neq d_{k-1}$, then $\sigma_{d_1}\cdots \sigma_{d_{k-1}}(d_{k+1})$ and $\tilde{d}_k$ are both contained inside $\tilde{d}_{k-1}$, so neither one contains infinity. Thus, the reflection of $\sigma_{d_1}\cdots \sigma_{d_{k-1}}(d_{k+1})$ across $\tilde{d}_k$ does not contain the center of $\tilde{d}_k$, so the radius of $\tilde{d}_{k+1}$ is at most half the radius of $\tilde{d}_k$. Therefore, if $d_{k+1} \neq d_{k-1}$ for infinitely many values of $k$, then the radii must approach $0$, so the circles must approach $x$. 

If $d_{k+1} = d_{k-1}$, then $\tilde{d}_{k+1} =\sigma_{\tilde{d}_{k}}(\tilde{d}_{k-1})$ (with the orientation reversed). If this holds for all but finitely many values of $k$, then there exists some $K \in \N$ such that for all $k \geq K$, $\tilde{d}_{k+1} =\sigma_{\tilde{d}_{k}}(\tilde{d}_{k-1})$. Suppose that $\tilde{d}_{K}$ and $\tilde{d}_{K+1}$ are disjoint. Then, after another M\"{o}bius transformation, we may assume that $\tilde{d}_{K}$ has radius $1$, $\tilde{d}_{K+1}$ has radius $r<1$, and they are concentric. An inductive argument shows that the radius of $\tilde{d}_{K+k}$ is $r^k$. These radii approach $0$, so the circles must approach $x$.

On the other hand, suppose that $\tilde{d}_{K}$ and $\tilde{d}_{K+1}$ are tangent. After a M\"{o}bius transformation, we may assume that $\tilde{d}_{K}$ has radius $1$, and $\tilde{d}_{K+1}$ has radius $r<1$. In this case, an inductive argument shows that the radius of $\tilde{d}_{K+k}$ is $\frac{1}{1+k(1/r-1)}$. These radii approach $0$, so the circles must approach $x$.

Since the circles $\tilde{d}_k \in \hat{\mathscr{P}}$ approach $x$, there is a corresponding sequence of circles $\tilde{c}_k \in \mathscr{P}$, with $\tilde{c}_k$ orthogonal to $\tilde{d}_k$, such that the interiors of the circles $\tilde{c}_k$ come arbitrarily close to $x$.
\end{proof}

The residual set of $\mathscr{P}$ is the set of points not in the interior of any circle. 

\subsection{Relation to other Packing Definitions}

In \cite{KontorovichNakamura} and \cite{KapovichKontorovich}, definitions are given for the related notions of Kleinian, crystallographic, and polyhedral packings. To compare our definition to these, we give sufficient conditions for our packings to be polyhedral, crystallographic, or Kleinian.

The definition of a polyhedral packing coincides with our definition, with the stricter additional assumption that $G_B$ and $G_{\hat{B}}$ are the graphs of a convex polyhedron and its dual. By the Koebe-Andreev-Thurston theorem, every polyhedron gives rise to a polyhedral packing. Proposition \ref{3Connected} and Steinitz's theorem imply the following:
\begin{proposition}\label{polyhedralcondition}
If $B$ and $\hat{B}$ are finite circle configurations satisfying the conditions of Definition \ref{basedual}, then they give rise to a polyhedral circle packing $\mathscr{P}$. 
\end{proposition}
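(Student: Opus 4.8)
The plan is to verify that the finiteness hypothesis, combined with the structural results already established, places $G_B$ and $G_{\hat B}$ exactly within the scope of Steinitz's theorem. First I would observe that since $B$ and $\hat B$ are finite, the tangency graphs $G_B$ and $G_{\hat B}$ are finite as well. We have already noted in the text that these graphs are simple, and property (1) of Definition \ref{basedual} endows them with natural planar embeddings, obtained by placing each vertex at the center of its circle. Thus $G_B$ and $G_{\hat B}$ are finite, simple, and planar.

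Next I would invoke Proposition \ref{3Connected}, which guarantees that $G_B$ and $G_{\hat B}$ are $3$-connected. Steinitz's theorem asserts that a graph is the $1$-skeleton of a convex $3$-polytope precisely when it is finite, simple, planar, and $3$-connected. Applying this to $G_B$ produces a convex polyhedron $P$ whose graph is $G_B$, and applying it to $G_{\hat B}$ produces a convex polyhedron whose graph is $G_{\hat B}$.

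The one point requiring a little care is to confirm that $G_{\hat B}$ is the graph of the dual polyhedron $P^{*}$, and not merely an abstract planar dual. Here I would appeal to the fact that a $3$-connected planar graph has an essentially unique planar embedding (Whitney's theorem), so that its combinatorial planar dual is well defined and coincides with the graph of the dual polytope. Since property (2) of Definition \ref{basedual} states that $G_B$ and $G_{\hat B}$ are planar duals of one another, it follows that $G_{\hat B}$ is the graph of $P^{*}$. Consequently $B$ and $\hat B$ realize the graphs of a convex polyhedron and its dual, which is exactly the additional hypothesis distinguishing polyhedral packings from the general packings of Definition \ref{basedual}; the packing $\mathscr{P}$ they generate is therefore polyhedral.

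I expect no serious obstacle in this argument: its substance is already contained in Proposition \ref{3Connected}, and the remaining work is the bookkeeping needed to match the planar-dual notion used in Definition \ref{basedual} with the polytope-dual produced by Steinitz's theorem. The mild subtlety worth flagging explicitly is this identification of the two notions of duality, for which the uniqueness of the embedding of a $3$-connected planar graph is the key input.
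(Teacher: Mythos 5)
Your argument is correct and takes essentially the same route as the paper, which justifies the proposition exactly by combining Proposition \ref{3Connected} with Steinitz's theorem applied to the finite, simple, planar tangency graphs. Your extra step invoking Whitney's uniqueness theorem to identify the combinatorial planar dual of Definition \ref{basedual} with the polytope dual supplied by Steinitz's theorem is a sound elaboration of a point the paper leaves implicit.
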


The definitions of crystallographic and Kleinian packings are more general, and relate the packing to a discrete group action on a higher-dimensional hyperbolic space. The two-sphere $\hat{\C}$ is identified with the boundary of three-dimensional hyperbolic space $\mathbb{H}^3$, and M\"{o}bius transformations are viewed as isometries of $\mathbb{H}^3$. The definitions of crystallographic and Kleinian circle packings are as follows:
\begin{definition}
Let $\mathscr{P}$ denote a collection of circles whose interiors are disjoint and dense in $\hat{\C}$. $\mathscr{P}$ is a crystallographic packing if its residual set is the limit set of a geometrically finite reflection group of isometries of $\mathbb{H}^3$. It is Kleinian if its residual set is the limit set of any geometrically finite group of isometries of $\mathbb{H}^3$.
\end{definition}
\noindent All polyhedral packings are crystallographic, and all crystallographic packings are Kleinian. 

The following theorem gives sufficient conditions for a packing $\mathscr{P}$ to be crystallographic or Kleinian.
\begin{theorem} \label{Kleiniancondition}
A packing $\mathscr{P}$ satisfying Definition \ref{packing} is Kleinian if any of the following conditions hold:
\begin{enumerate}
\item $B \cup \hat{B}$ is finite.
\item After applying some M\"{o}bius transformation, $B\cup\hat{B}$ is a strip configuration (i.e. it contains two parallel lines) and $\Gamma_2$ contains a translation.
\item After applying some M\"{o}bius transformation, $\Gamma_2$ contains two linearly independent translations.
\end{enumerate}
Furthermore, $\mathscr{P}$ is crystallographic if any of the above conditions hold, with $\Gamma_2$ replaced by the maximal reflective subgroup of $\Gamma_2$.
\end{theorem}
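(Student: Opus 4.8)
The plan is to realize the residual set of $\mathscr{P}$ as the limit set of a discrete group of isometries of $\mathbb{H}^3$ built from the reflections that generate the packing, and then to verify geometric finiteness of that group under each of the three hypotheses. Throughout I would identify $\hat{\C}$ with $\partial\mathbb{H}^3$ and extend every M\"{o}bius transformation to an isometry of $\mathbb{H}^3$; to each circle $c$ I associate the totally geodesic plane $\Pi_c\subset\mathbb{H}^3$ bounded by $c$, so that $\sigma_c$ becomes the hyperbolic reflection across $\Pi_c$. For the Kleinian statement I would take $G=\langle\Gamma_1,\Gamma_2\rangle$, and for the crystallographic statement the reflection group generated by $\{\sigma_c:c\in B\cup\hat B\}$ together with the reflections in the maximal reflective subgroup of $\Gamma_2$.

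The first task is to identify $\Lambda(G)$ with the residual set $R$. Since $\Gamma_1\le G$ and $G$ preserves $\mathscr{P}$, the limit set $\Lambda(G)$ is $G$-invariant and contained in $R$; conversely, the nested shrinking circles $\tilde d_k\in\hat{\mathscr{P}}$ constructed in the proof of Proposition \ref{dense} exhibit every residual point as a limit of $\Gamma_1$-images of a fixed circle, so $R\subseteq\Lambda(\Gamma_1)\subseteq\Lambda(G)$, and with Propositions \ref{cover} and \ref{disjointinteriors} this gives $\Lambda(G)=R$. The second task is discreteness: the angle conditions in Definition \ref{basedual} force any two of the planes $\Pi_c$, $c\in B\cup\hat B$, to be disjoint, tangent at infinity, or orthogonal, so all dihedral angles are $\pi/2$ or $0$ and Poincar\'{e}'s polyhedron theorem applies to the chamber they cut out; the extension by $\Gamma_2$ remains discrete because $\Gamma_2$ permutes these planes.

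The heart of the matter is geometric finiteness, which I would establish through Bowditch's characterization: $G$ is geometrically finite exactly when every limit point is conical or a bounded parabolic fixed point. Generic residual points are radial limits of the shrinking circles above, hence conical; the parabolic fixed points are the tangency points of the packing and, in the infinite cases, the accumulation point $\infty$. Under condition (1) the chamber has finitely many walls, so the fundamental polyhedron of $G$ is finite-sided and geometric finiteness is immediate. Under condition (3) the two independent translations make $\infty$ a rank-$2$ parabolic fixed point whose stabilizer acts cocompactly on a horosphere (a torus cross-section), so $\infty$ is bounded parabolic, while the translation lattice collapses the infinitely many walls into finitely many orbits. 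Under condition (2) the two parallel lines confine $R$ to a strip of bounded width and the translation makes it periodic along the strip, so the rank-$1$ cusp at $\infty$ again has compact cross-section; the tangency points fall into finitely many orbits in all three cases.

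The main obstacle I anticipate is precisely this third step in the infinite cases (2) and (3): showing that the translation symmetry genuinely reduces the fundamental domain to finitely many faces despite there being infinitely many reflecting walls, and, more delicately, verifying that $\infty$ and the tangency points are \emph{bounded} parabolic fixed points rather than merely parabolic. The strip hypothesis in (2) is exactly what supplies the transverse boundedness needed for the rank-$1$ cusp, and carefully ruling out sequences of circles that escape to infinity in the transverse direction is where the real work lies. Once geometric finiteness is in hand, the crystallographic refinement requires only the observation that each group above can be taken to be generated by reflections after $\Gamma_2$ is replaced by its maximal reflective subgroup.
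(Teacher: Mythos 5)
Your overall frame---identify the residual set with the limit set of $\Gamma=\langle\Gamma_1,\Gamma_2\rangle$ acting on $\mathbb{H}^3$ and then prove geometric finiteness---matches the paper's, but at the decisive step you take a different and much heavier route, and that is where the genuine gap sits. The paper never classifies limit points. It invokes the structure theorem $\Gamma\cong\Gamma_1\rtimes\Gamma_2$ (Theorems \ref{gamma2} and \ref{semidirectproduct}) and builds an explicit fundamental polyhedron: up to finite index, $\Gamma_2$ has fundamental domain all of $\hat{\C}$ in case (1), a strip transverse to the translation in case (2), and a compact parallelogram in case (3); each hypothesis guarantees that this domain meets only finitely many dual circles (in case (2) this is exactly what the two parallel lines buy---they trap all the other circles in a band of bounded width, so the translation strip meets only a compact region of the configuration, the sole accumulation point being at $\infty$). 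Bounding the chamber by vertical half-planes over the walls of the $\Gamma_2$-domain and hemispheres over those finitely many dual circles gives a finite-sided fundamental polyhedron for $\Gamma$, and geometric finiteness is immediate, with no discussion of conical or parabolic points required.

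Your route through Bowditch's criterion could in principle be completed, but as written it asserts precisely what has to be proved: that every non-parabolic residual point is conical, that $\infty$ and the tangency points are \emph{bounded} parabolic fixed points, and that the parabolic points fall into finitely many orbits---you flag these yourself as ``where the real work lies,'' and no argument is supplied for any of them. The conicality claim in particular does not follow from the nested-circle construction of Proposition \ref{dense} alone: nested shrinking disks containing $x$ yield conical approximation only with uniform control of the position of $x$ inside them, and the harmonic decay $1/(1+k(1/r-1))$ in the tangent case of that proof shows exactly the dichotomy (geometric shrinkage versus cusp-like harmonic shrinkage) that would have to be disentangled pointwise over the residual set. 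The missing idea is the semidirect-product decomposition itself, which lets one quotient the infinitely many reflecting walls by $\Gamma_2$ \emph{before} forming a fundamental domain, reducing everything to a finiteness count of dual circles in one $\Gamma_2$-cell. Two secondary, fixable points: your appeal to Poincar\'{e}'s polyhedron theorem needs a version allowing infinitely many faces, which is available here because property (4) of Definition \ref{basedual} gives local finiteness of the hemispheres in $\mathbb{H}^3$; and in the crystallographic case one should justify that the limit set is unchanged when $\Gamma_2$ is shrunk to its maximal reflective subgroup, which holds because $\Gamma_1$ remains an infinite normal subgroup, so the limit set is still $\Lambda(\Gamma_1)$, the residual set.
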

\begin{proof}
By Propositions \ref{disjointinteriors}, \ref{dense}, the interiors of circles in $\mathscr{P}$ are disjoint and dense in $\hat{\C}$. We will show that each condition (1)-(3) implies that $\Gamma$, viewed as a group of isometries of $\mathbb{H}^3$, is geometrically finite. This is essentially a consequence of Theorems \ref{gamma2}, \ref{semidirectproduct}. Up to finite index, we have the following fundamental domains for $\Gamma_2$: in case (1) all of $\hat{\C}$, in case (2) a strip, and in case (3) a compact parallelogram. Each condition implies that this fundamental domain only intersects finitely many dual circles in $\hat{B}$. Using the half-space model of $\mathbb{H}^3$, we may define a half-plane for each wall of the fundamental domain for $\Gamma_2$, and a hemisphere for each dual circle in this fundamental domain. These walls bound a geometrically finite fundamental domain for $\Gamma$. The limit set of $\Gamma$ is the residual set of $\mathscr{P}$. Thus the packing is Kleinian. The proof in the crystallographic case works similarly. 
\end{proof}

Our construction gives rise to packings which are polyhedral (in fact, all polyhedral packings), packings which are crystallographic but not polyhedral, packings which are Kleinian but not crystallographic, and packings which are none of the above. In Section 6, the wallpaper groups generated by reflections give rise to crystallographic packings; the wallpaper groups containing translations but no reflections give rise to Kleinian packings. And the same refinement method in the proof of Theorem \ref{wallpaper} can produce aperiodic packings, which are not Kleinian. These examples can be constructed to have integrality or superintegrality properties. 

Note that Proposition \ref{polyhedralcondition} and Theorem \ref{Kleiniancondition} give sufficient but not necessary conditions for $\mathscr{P}$ to be polyhedral, crystallographic, or Kleinian. Their proofs involve the structure of the group $\Gamma=\textsc{Sym}(\mathscr{P},\hat{\mathscr{P}})$. But $\mathscr{P}$ may have additional symmetry not detected by $\Gamma$; in general, $\textsc{Sym}(\mathscr{P},\hat{\mathscr{P}}) \neq \textsc{Sym}(\mathscr{P})$. We might make different choices of base and dual configuration $B$, $\hat{B}$, which give rise to the same packing $\mathscr{P}$ but a different dual packing $\hat{\mathscr{P}}$, and thus a different $\Gamma$. For example, working with the classical Apollonian packing, we could select any collection of circles defining a 3-connected subgraph of the full tangency graph as $B$. With a nonstandard choice of base and dual configuration, we would find a smaller symmetry group $\Gamma$, and the packing would not be immediately identifiable as polyhedral, crystallographic, or Kleinian. 

\section{Existence and Uniqueness of Packings and a Generalized Koebe-Andreev-Thurston Theorem}

In this section, we recall different versions of the Koebe-Andreev-Thurston theorem that imply the existence and uniqueness up to conformal automorphism of our packings in many cases. We conjecture a generalization which would imply existence and uniqueness in all cases. 

To state the problem precisely: let $G$ and $\hat{G}$ be a pair of simple 3-connected plane graphs corresponding to the 1-skeleton of a cellular decomposition of the sphere $\hat{\C}$ or the plane $\C$, and the 1-skeleton of the dual cellular decomposition. Note that a cellular decomposition of the plane can be viewed as a cellular decomposition of the sphere with a unique accumulation point at $\infty$. Do there exist circle configurations $B$ and $\hat{B}$, satisfying the conditions of Definition \ref{basedual}, such that $G_B \cong G$ and $G_{\hat{B}} \cong \hat{G}$? The $\cong$ symbol here means an isomorphism of graphs and of the associated cellular decompositions. Moreover, are the configurations $B$ and $\hat{B}$ unique up to conformal automorphism of $\hat{\C}$ or $\C$? A conformal automorphism of $\hat{\C}$ is a M\"{o}bius transformation; a conformal automorphism of $\C$ is a M\"{o}bius transformation which fixes infinity, i.e. a similarity $z \mapsto az+b$ or $z \mapsto a \bar{z}+b$. 

The Koebe-Andreev-Thurston theorem answers these questions in the affirmative when $G$ and $\hat{G}$ are finite graphs. A version of the theorem closely aligned with this article appears in \cite{BrightwellScheinerman}. Restated in our language, their Theorem 6 is as follows:

\begin{theorem} \label{KAT1}
Let $G$ and $\hat{G}$ be a pair of finite, simple, 3-connected plane graphs, corresponding to a cellular decomposition of $\hat{\C}$ and its dual. Then there exist circle configurations $B$, $\hat{B}$, satisfying Definition \ref{basedual}, such that $G_B \cong G$ and $G_{\hat{B}} \cong \hat{G}$, and these configurations are unique up to M\"{o}bius transformation.
\end{theorem}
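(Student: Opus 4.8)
The plan is to deduce both existence and uniqueness from the classical primal--dual form of the Koebe-Andreev-Thurston theorem, i.e.\ Brightwell--Scheinerman's Theorem 6, and then to check that the configuration it produces satisfies, verbatim, the four conditions of Definition \ref{basedual}. Concretely, I would first use Steinitz's theorem to realize the given pair $G$, $\hat G$ as the edge graph of a convex polyhedron $P$ and its dual $P^{*}$, and then invoke the midsphere (cage) form of the theorem: $P$ can be realized with every edge tangent to the unit sphere $S^2 \subset \R^3$, with the dual edge of $P^{*}$ tangent to $S^2$ at the same point and perpendicular to it. Identifying $S^2$ with $\hat\C$ by stereographic projection turns the projective transformations of $\R^3$ preserving $S^2$ into the Möbius group of $\hat\C$, so uniqueness of this realization up to such projective maps is exactly uniqueness up to Möbius transformation.

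From the midscribed polyhedron I would read off the two configurations. For each vertex $v$ of $P$ the tangent cone from $v$ meets $S^2$ in a circle $c_v$; I take $B = \{c_v\}$, oriented so that the interior is the cap cut off on the side away from the center $O$. For each face $f$ the supporting plane $\pi_f$ meets $S^2$ in a circle $\hat c_f$ inscribed in $f$; I take $\hat B = \{\hat c_f\}$, oriented the same way. Two vertices (resp.\ faces) are adjacent exactly when the corresponding circles share the tangent point $p_e$ of the common edge $e$ and are tangent there, so $G_B \cong G$ and $G_{\hat B}\cong \hat G$ as embedded graphs, giving condition (2). For the incidence half of condition (3) I would use the standard polar calculation: two circles cut from $S^2$ by planes are orthogonal exactly when the pole of one plane lies on the other, the plane of $c_v$ is the polar plane of $v$ (whose pole is $v$), and, $P$ being convex and midscribed, $v$ lies on $\pi_f$ iff $v$ is a vertex of $f$. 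Condition (4) is automatic, since a finite configuration has no accumulation point.

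The step that needs genuine care is the disjointness/orientation content of conditions (1) and (3): that the chosen caps have pairwise disjoint interiors, tangent only along edges, and that a non-incident pair $c_v, \hat c_f$ has disjoint interiors rather than crossing. I would verify this from the convexity of $P$. The disks bounded by the $c_v$ form a Koebe circle packing on $S^2$ realizing $G$, so they have pairwise disjoint interiors and are tangent exactly along edges, and the same holds for the $\hat c_f$. Non-incidence $v \notin f$ corresponds to $v$ lying strictly on the $O$-side of $\pi_f$ (all vertices of $P$ off $f$ do, since $f$ is a face of a convex body), which after the polar correspondence forces $c_v$ and $\hat c_f$ to be strictly separated on the sphere rather than to cross. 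This translation of combinatorial incidence into cap geometry is where I expect to spend the most effort; everything else is bookkeeping once the midsphere model is in hand.

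For uniqueness I would not reprove rigidity from scratch but invoke the uniqueness clause of Brightwell--Scheinerman / the midsphere theorem: any two configurations meeting Definition \ref{basedual} with tangency graphs $G$, $\hat G$ reconstruct midscribed polyhedra of the same combinatorial type, and these are projectively equivalent by a transformation preserving $S^2$, hence the configurations are Möbius-equivalent. The main obstacle throughout is really the existence half---the midsphere theorem itself, established by a variational or fixed-point argument---but since this is precisely the content of the cited Theorem 6, the work left to me is the faithful dictionary between their primal--dual circle representation and Definition \ref{basedual}, with the cap-disjointness verification above as its only nontrivial entry.
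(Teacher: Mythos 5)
Your proposal is correct and follows essentially the same route as the paper: the paper offers no independent proof of Theorem \ref{KAT1}, presenting it simply as a restatement ``in our language'' of Theorem 6 of Brightwell--Scheinerman, which is exactly the result you invoke. Your additional work---the midsphere/polar dictionary translating their primal--dual circle representation into the four conditions of Definition \ref{basedual}, including the cap-disjointness check for non-incident vertex--face pairs---is a sound and somewhat more careful spelling-out of the translation the paper leaves implicit.
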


As discussed in the introduction, this theorem has been extended to some infinite graphs. Theorem 4.3, the ``Discrete Uniformization Theorem'' in \cite{Stephenson} implies the following:

\begin{theorem} \label{KAT2}
Let $G$ be an infinite simple, 3-connected plane graph, corresponding to a triangulation of $\C$. Then there exist circle configurations $B$, $\hat{B}$, satisfying Definition \ref{basedual}, such that $G_B \cong G$, and these configurations are unique up to similarity.
\end{theorem}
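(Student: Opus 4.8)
The plan is to take the base configuration $B$ straight from the cited Discrete Uniformization Theorem, to build the dual configuration $\hat{B}$ canonically out of $B$, and then to verify the four conditions of Definition \ref{basedual} one at a time, deducing uniqueness from the rigidity half of the cited theorem together with the canonicity of the construction. First I would invoke Theorem 4.3 of \cite{Stephenson}: since $G$ triangulates $\C$ and is therefore of parabolic type, it is realized by a locally finite circle packing $B$ that fills $\C$, with $G_B \cong G$, and this packing is unique up to similarity. This provides existence of $B$ and the rigidity needed later; because the realization is faithful, the circles of $B$ are tangent exactly along edges of $G$ and have pairwise disjoint interiors, which is property (1) for $B$.

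Next I would construct $\hat{B}$. As $G$ is a triangulation, every face is a triple of mutually tangent base circles $\{c_a,c_b,c_x\}$ bounding an interstice. To each such face I associate the unique generalized circle $d$ through the three pairwise tangency points, oriented so that its interior contains the interstice. A short computation shows that the circle through the three tangency points meets each of $c_a, c_b, c_x$ orthogonally (so it is degenerate, a line, only when the three tangency points happen to be collinear). Letting $\hat{B}$ be the collection of these face circles, indexed by the faces of $G$, gives a bijection between $\hat{B}$ and the vertices of the dual graph $\hat{G}$.

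The third and central step is verifying properties (1)--(4), and the engine for all of it is the containment claim that the interior of each face circle $d$ is contained in its interstice together with the three circular segments that $d$ cuts inside its own base circles $c_a, c_b, c_x$; this follows because an orthogonal circle meets each $c$ in exactly two points, so the arc of $d$ between two consecutive tangency points is the one bulging into the corresponding base disk. Granting this, property (2) holds because two face circles are tangent exactly when their faces share an edge: at the common tangency point $p$ both face circles are orthogonal to the two base circles meeting at $p$, which forces them to share the tangent line perpendicular to those circles, hence to be tangent at $p$; this gives $G_{\hat{B}} \cong \hat{G}$ with the correct duality. For property (3), a base circle $c$ and a face circle $d$ meet iff the vertex lies on the face, in which case they are orthogonal by construction; when the vertex is not on the face, the containment claim puts the whole curve $d$ inside the closed base disks of its own face except at its three tangency points, none of which $c$ passes through (by Proposition \ref{tangencypoint}), so $c$ and $d$ are disjoint. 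The same containment yields property (1) for $\hat{B}$: adjacent face circles are tangent with interiors on opposite sides of the common tangent line, and face circles sharing only a base circle occupy distinct arc-sectors of that circle, so all interiors are disjoint. Property (4) is then immediate: $B$ fills $\C$ and is locally finite, each face circle is trapped within its three base circles and their interstice, so $B \cup \hat{B}$ accumulates only at $\infty$.

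For uniqueness I would note that the passage from $B$ to $\hat{B}$ is canonical and commutes with similarities, since a similarity carries tangent triples to tangent triples and the circle through the tangency points to the circle through their images. Thus if $(B', \hat{B}')$ is any other pair realizing $G$, the rigidity clause of Theorem 4.3 supplies a similarity $\phi$ with $\phi(B) = B'$, and canonicity forces $\phi(\hat{B}) = \hat{B}'$, so the pair is unique up to similarity. The main obstacle I anticipate is not a single hard estimate but the careful bookkeeping of interiors and orientations underlying the containment claim, since that is exactly what rules out the unwanted overlaps in properties (1) and (3); by contrast the analytic input, namely that the triangulation packs $\C$ rather than the hyperbolic disc and does so in a locally finite way, is supplied entirely by the cited theorem.
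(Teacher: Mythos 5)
Your proposal is correct and takes essentially the same approach as the paper, which gives no standalone proof: it derives the theorem directly from Theorem 4.3 of \cite{Stephenson} and remarks only that for triangulations ``the existence and uniqueness of $B$ immediately imply the existence and uniqueness of $\hat{B}$.'' Your canonical construction of $\hat{B}$ as the orthogonal circles through the tangency points of each face, the verification of Definition \ref{basedual}, and the canonicity argument for uniqueness simply make that remark explicit (the one caveat, that the triangulation must be of parabolic type so that the maximal packing fills $\C$ rather than the disc, is glossed over in the paper's statement just as in your first step).
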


\noindent Stephenson's statement of the theorem includes triangulations of other surfaces as well. The restriction to triangulations, not general cellular decompositions, seems to be a convenient simplification rather than an essential restriction. In this case, the existence and uniqueness of $B$ immediately imply the existence and uniqueness of $\hat{B}$. But if $G$ is not a triangulation, then $B$ and $\hat{B}$ must be constructed together, and both are needed to ensure uniqueness. 

Given these two versions of the Koebe-Andreev-Thurston theorem, it is natural to conjecture the following common extension:

\begin{conjecture}\label{KATconjecture}
Let $G$ and $\hat{G}$ be a pair of simple, 3-connected plane graphs, corresponding to a cellular decomposition of $\C$ and its dual. Then there exist circle configurations $B$, $\hat{B}$, satisfying Definition \ref{basedual}, such that $G_B \cong G$ and $G_{\hat{B}} \cong \hat{G}$, and these configurations are unique up to similarity.
\end{conjecture}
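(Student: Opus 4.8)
The plan is to reduce the statement to an existence-and-uniqueness question for \emph{orthogonal circle patterns} and then to attack that question by the exhaustion-and-limit method that underlies Theorem~\ref{KAT2}. First I would encode the combinatorial data $(G,\hat{G})$ in a single auxiliary graph. Let $Q=Q(G,\hat{G})$ be the bipartite graph on vertex set $V(G)\sqcup V(\hat{G})$ in which a primal vertex $v$ and a dual vertex $f$ are joined whenever $v$ lies on the face of $G$ corresponding to $f$. Because $G$ and $\hat{G}$ are dual plane graphs, each edge $e=\{v,w\}$ of $G$, bordering faces $f,g$, produces a quadrilateral face $v\,f\,w\,g$ of $Q$, so $Q$ is a quadrangulation of $\C$. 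A configuration $B,\hat{B}$ satisfying Definition~\ref{basedual} with $G_B\cong G$ and $G_{\hat{B}}\cong\hat{G}$ is then exactly the data of a realization of $Q$ by an orthogonal circle pattern: a circle for each primal vertex and each dual vertex, with the two circles across each diagonal of a quadrilateral tangent (primal--primal along one diagonal, dual--dual along the other), the two circles across each quadrilateral \emph{edge} crossing orthogonally, and all four circles sharing the common tangency/crossing point (Proposition~\ref{tangencypoint}). The whole difficulty of leaving the triangulated case is visible here: when every face of $G$ is a triangle the kites of $Q$ are already rigid and the dual circle around a face is forced by its three tangencies (the mechanism behind the remark following Theorem~\ref{KAT2}); once a face has degree $k\ge 4$ the circle orthogonal to the $k$ surrounding primal circles is overdetermined, so $B$ and $\hat{B}$ genuinely must be solved for together.

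For existence I would run the standard discrete-uniformization scheme. Exhaust $Q$ by an increasing sequence of finite, simply connected subcomplexes; cap off the boundary of each to obtain a finite, simple plane graph together with its dual, arranging that the capped graph and its dual stay $3$-connected (so that Steinitz's theorem and Proposition~\ref{3Connected} apply) and invoke the finite Koebe-Andreev-Thurston theorem, Theorem~\ref{KAT1}, to realize each as a finite orthogonal configuration $B_n,\hat{B}_n$. Normalize each $B_n$ by a M\"{o}bius transformation pinning a fixed base flag---one primal circle, one incident dual circle, and their orthogonal crossing point---to a standard position and radius. The heart of the argument is then a compactness step: one needs a \emph{ring lemma} for orthogonal patterns, that is, a bound depending only on combinatorial distance to the base flag on the ratio of radii of any two circles of $B_n$, in order to extract a subsequence converging to a nondegenerate limiting pattern that is locally finite away from one point, realizes $Q$, and has its single accumulation point placed at $\infty$. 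Properties (1)--(4) of Definition~\ref{basedual} then pass to the limit.

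For uniqueness up to similarity I would argue by rigidity, following \cite{Schramm:Rigidity}. Given two realizations of $Q$ on $\C$, form the function on the circles assigning the ratio of the two radii; a discrete maximum principle, or Schramm's uniqueness machinery for infinite circle patterns in \cite{Schramm:ExistenceandUniqueness, Schramm:Rigidity}, shows that on a parabolic-type complex this ratio must be constant, and matching the two patterns at a single flag then upgrades the constant-ratio statement to equality up to a Euclidean similarity. This is consistent with the ``up to similarity'' rather than ``up to M\"{o}bius'' conclusion, which already signals that the relevant realization is the plane-filling (parabolic) one rather than a realization inside a round disk.

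The main obstacle---and the reason the statement is posed as a conjecture rather than a theorem---is precisely the pair of facts that the triangulated theory gets for free but that are open in this generality: the \emph{type problem} and the ring lemma. The uniqueness statement presumes that the combinatorics of $Q$ forces the pattern to fill all of $\C$ (parabolic type) rather than a disk (hyperbolic type); for triangulations this dichotomy and its combinatorial criteria are well understood \cite{BeardonStephenson, Stephenson}, but for orthogonal patterns on quadrangulations with \emph{unbounded} vertex and face degrees neither a clean ring lemma nor a combinatorial type criterion is available. If one restricts to bounded degree, I expect the exhaustion-plus-rigidity argument above to go through essentially as stated and to recover the conjecture in that case; the unbounded-degree situation, which is exactly where the new plane-tiling examples of this paper live, is where genuinely new analytic input is required.
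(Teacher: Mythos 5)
You should note at the outset that the paper itself does not prove this statement: it is posed as Conjecture \ref{KATconjecture}, supported only by special cases in the literature (the triangular lattice via Appendix 1 of \cite{RodinSullivan}, the square lattice via \cite{Schramm:SquareGrid}) and by the authors' stated intention to pursue the methods of \cite{Stephenson} in future work. So there is no paper proof to compare against step by step, and your proposal must be judged as a proof program. As a program it is the natural one and matches what the paper envisions: your quad-graph encoding of $(G,\hat{G})$ as an orthogonal circle pattern is correct (your condition that the four circles around each quadrilateral share a common tangency/orthogonality point is exactly Proposition \ref{tangencypoint}), your use of Theorem \ref{KAT1} on capped finite exhaustions with a pinned flag is the standard discrete-uniformization scheme behind Theorem \ref{KAT2}, your rigidity argument for uniqueness is the right appeal to \cite{Schramm:ExistenceandUniqueness, Schramm:Rigidity}, and your diagnosis of why the non-triangulated case is genuinely harder---the dual circle around a face of degree $k\ge 4$ is overdetermined, so $B$ and $\hat{B}$ must be solved for jointly---is precisely the paper's remark following Theorem \ref{KAT2}.

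That said, what you have written is a strategy with two named holes, not a proof, and you concede as much: the compactness step (a ring-lemma analogue for orthogonal patterns, uniform over the exhaustion, in the presence of unbounded vertex and face degrees) and the type problem (showing the combinatorics force a parabolic, plane-filling realization, which the uniqueness-up-to-similarity claim presupposes) are exactly the open ingredients, and this is why the statement is a conjecture rather than a theorem. One further subtlety deserves emphasis beyond your discussion: the hypothesis ``cellular decomposition of $\C$'' is purely topological, and an open disc is homeomorphic to $\C$, so hyperbolic-type combinatorics satisfy the hypothesis as literally stated; for such $G$ the natural realization lives in a disc and accumulates along an entire boundary circle, violating property (4) of Definition \ref{basedual}, and rigidity would then exclude any realization with a single accumulation point---so existence can genuinely fail unless parabolicity is either proved or built into the hypothesis. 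Any complete proof must confront this, and your bounded-degree restriction is indeed where one would first expect the exhaustion-plus-rigidity program to close, via He--Schramm-type criteria. Verdict: your route is consistent with the paper's intended one and your reductions are sound, but neither you nor the paper supplies a proof; the ring lemma and the type problem remain the concrete gaps.
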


\noindent This would imply the existence and uniqueness of the whole class of packings we study. We hope to prove this conjecture in future work, following the methods of \cite{Stephenson}. As further evidence for the conjecture, we remark that several special cases have been studied carefully in the literature. When $G$ is the triangular lattice, the uniqueness of the associated circle configuration is a crucial step in Rodin and Sullivan's celebrated proof of the convergence of circle packings to the Riemann mapping \cite[Appendix 1]{RodinSullivan}. This means that the triangular and hexagonal packings studied in Section 5 are unique up to similarity. When $G$ is the square lattice, the uniqueness of the associated circle configuration is the main theorem of \cite{Schramm:SquareGrid}. So the square packing in Section 5 is also unique up to similarity. 

It should also be possible to extend Conjecture \ref{KATconjecture} to the hyperbolic plane and other surfaces, but we have not investigated this. 

\section{Group Structure Theorems}

In this section, we give a complete algebraic description of the symmetry groups associated to packings $\mathscr{P}$. We begin with the structure of the groups $\Gamma_1$ and $\Gamma_2$. Then, via an examination of the action on $\mathscr{P}$, we show that $\Gamma=\Gamma_1 \rtimes \Gamma_2$. 

\begin{theorem}\label{gamma1}
$\Gamma_1$ is a free Coxeter group generated by $\sigma_d$ for $d \in \hat{B}$, where the only relations are $\sigma_d^2=1$. 
\end{theorem}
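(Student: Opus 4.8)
The claim has two parts: the relations $\sigma_d^2 = 1$ hold (which is automatic, since each $\sigma_d$ is a reflection), and these are the *only* relations. So the real content is freeness: I must show that no nontrivial reduced word in the generators — meaning a product $\sigma_{d_1}\cdots\sigma_{d_k}$ with each $d_i \in \hat{B}$ and consecutive letters distinct — equals the identity. The plan is to exploit the geometric "nesting" already established in the earlier propositions, which is exactly the standard ping-pong / table-tennis strategy for proving a group is a free product of copies of $\Z/2\Z$.

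Let me sketch the mechanism. For each dual circle $d \in \hat{B}$, let $D_d$ denote its interior (oriented so interiors are disjoint, per property (1) of Definition \ref{basedual}). The key geometric fact, extracted from the proofs of Propositions \ref{dualOrInside} and \ref{disjointinteriors}, is that if $d \neq d'$ then $\sigma_d$ maps the \emph{exterior} of $d$ into $D_d$, and since $D_{d'}$ lies in the exterior of $d$ (disjoint interiors), we get $\sigma_d(D_{d'}) \subseteq D_d$. More generally, anything living outside $d$ gets pushed strictly inside $D_d$. Now take a reduced word $w = \sigma_{d_1}\cdots\sigma_{d_k}$ and apply it to a test point $x$ chosen outside every dual circle — such a point exists since the interiors are disjoint and, by Proposition \ref{cover}, the configuration does not fill everything except possibly the accumulation point; concretely I can pick $x$ to be (or to lie very near) the accumulation point, or any point lying outside all $D_d$. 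Reading the word from right to left: $\sigma_{d_k}(x) \in D_{d_k}$ because $x$ is outside $d_k$; then $\sigma_{d_{k-1}}$ sends this into $D_{d_{k-1}}$ provided $\sigma_{d_k}(x)$ is outside $d_{k-1}$, which holds because $d_{k-1} \neq d_k$ forces $D_{d_k} \subseteq$ exterior of $d_{k-1}$. Iterating, $w(x) \in D_{d_1}$, so $w(x)$ lies strictly inside the first dual circle. Since $x$ was chosen outside all dual circles, $w(x) \neq x$, and hence $w \neq 1$. This is precisely the ping-pong argument specialized to reflections, and it shows the only relations are $\sigma_d^2 = 1$, i.e. $\Gamma_1 \cong \ast_{d \in \hat{B}} \, \Z/2\Z$.

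The main obstacle I anticipate is the \emph{bookkeeping around tangency and the accumulation point}, rather than the core idea. When $d_{k-1}$ and $d_k$ are tangent, $\sigma_{d_k}(x)$ may approach the tangency point rather than sitting comfortably in the open interior of $d_{k-1}$, so I must phrase the containment carefully — using closed interiors and arguing that the tangency point is fixed only by reflections through circles passing through it, which by Proposition \ref{tangencypoint} are limited. I would handle this by choosing the base point $x$ generically (outside all dual circles and distinct from every tangency point and from the accumulation point), and by checking that the nesting $\sigma_{d_i}(\overline{D_{d_{i+1}}}) \subseteq \overline{D_{d_i}}$ together with injectivity of Möbius maps keeps the image of a generic interior point in the open interior at each stage. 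The second subtlety is that $\hat{B}$ may be infinite, so $\Gamma_1$ is an infinite free product; but ping-pong works verbatim for arbitrarily many factors since each reduced word involves only finitely many letters, so no compactness or finiteness of $\hat{B}$ is needed. I would close by remarking that the argument uses only properties (1) and (3) of Definition \ref{basedual} — disjoint interiors and orthogonality/disjointness between base and dual circles — which are exactly what guarantee the clean nesting behavior the ping-pong lemma requires.
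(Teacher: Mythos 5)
Your proposal is correct and is essentially the paper's own proof: both apply a reduced word $\sigma_{d_1}\cdots\sigma_{d_k}$ to a point $x$ chosen outside every dual circle and use the disjoint-interiors property (1) to nest the image into the open interior of $d_1$, forcing $w(x)\neq x$. Your extra bookkeeping about tangency points is actually unnecessary, since a point in the \emph{open} interior of $d_{i+1}$ is automatically in the open exterior of the distinct circle $d_i$ even when the two are tangent (they share only a boundary point), which is exactly why the paper's shorter version goes through.
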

\begin{proof}
We must show that the relations $\sigma_d^2=1$ are the only ones. Choose a point $x \in \hat{\C}$, outside of every circle in $\hat{B}$. Suppose that a string $\sigma_{d_1} \cdots \sigma_{d_k} \in \Gamma_1$, with consecutive $d_i$ distinct, is applied to $x$. Since $x$ is outside of $d_k$, $\sigma_{d_k}(x)$ is in the interior of $d_k$. Repeating this process, by property (1) from Definition \ref{basedual}, we find that $\sigma_{d_1} \cdots \sigma_{d_k}(x)$ is in the interior of $d_1$. Thus it is not equal to $x$, and $\sigma_{d_1} \cdots \sigma_{d_k}$ is not the identity.
\end{proof}

\begin{theorem} \label{gamma2}
If $B$ is finite, then $\Gamma_2$ is the group of symmetries of a polyhedron. If $B$ is infinite, then $\Gamma_2$ is conjugate to a discrete group of isometries of the plane: a cyclic group, dihedral group, frieze group, or wallpaper group. 
\end{theorem}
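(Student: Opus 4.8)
The plan is to prove Theorem~\ref{gamma2} by separating the finite and infinite cases, and in the infinite case reducing the problem to the classification of discrete subgroups of the isometry group of the Euclidean plane. The finite case is immediate: if $B$ is finite then by the discussion following Proposition~\ref{3Connected}, $G_B$ is the graph of a polyhedron, $G_{\hat B}$ is the graph of its dual, and $\Gamma_2 = \textsc{Sym}(B,\hat B)$ acts as the symmetry group of this polyhedron (in the finite setting, the Koebe-Andreev-Thurston theorem of Theorem~\ref{KAT1} identifies $\Gamma_2$ with the combinatorial automorphism group of the polyhedral graph, realized geometrically as M\"obius transformations preserving the configuration). So the substance is entirely in the infinite case.

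For the infinite case, the first step is to show that $\Gamma_2$ fixes the unique accumulation point of $B \cup \hat B$. By property (4) of Definition~\ref{basedual}, an infinite $B$ has exactly one accumulation point $x_0$; any element $\gamma \in \Gamma_2$ preserves $B \cup \hat B$ and therefore must carry accumulation points to accumulation points, so $\gamma(x_0) = x_0$. After conjugating by a M\"obius transformation sending $x_0$ to $\infty$, every element of $\Gamma_2$ is a M\"obius transformation fixing $\infty$, i.e.\ a Euclidean similarity $z \mapsto az+b$ or $z \mapsto a\bar z + b$. The second step is to rule out genuine scalings: I would argue that $\Gamma_2$ acts by Euclidean \emph{isometries}, not merely similarities. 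The point is that the circles in $B$ have radii bounded away from $0$ and $\infty$ away from the accumulation point; more precisely, a similarity with $|a| \neq 1$ would either shrink or expand the configuration, and iterating it would force circles of arbitrarily large or arbitrarily small radius to cluster at a second point, contradicting that $\infty$ is the \emph{only} accumulation point (a scaling fixing $\infty$ with $|a|\neq 1$ has a second fixed point in $\C$ which would become an accumulation point of the orbit of any circle). Hence $|a| = 1$ and $\Gamma_2$ is conjugate to a group of Euclidean isometries.

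The third step is discreteness. I would show $\Gamma_2$ is discrete in the isometry group by using that the circles of $B$ have interiors that are pairwise disjoint (property (1)) and that, restricted to any compact region of $\C$ avoiding $\infty$, only finitely many circles of $B$ appear with radius above a fixed threshold. A nontrivial sequence $\gamma_n \to \mathrm{id}$ would move a fixed base circle $c$ by smaller and smaller amounts; since distinct elements of $\Gamma_2$ must send $c$ to distinct base circles (the configuration has disjoint interiors, so no isometry other than a stabilizer element fixes $c$ setwise unless it is a symmetry of finite order), the images $\gamma_n(c)$ would be infinitely many distinct base circles accumulating at the interior point of $c$, violating property (4). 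This establishes that $\Gamma_2$ is a discrete group of plane isometries, and the classical classification of such groups (by the rank of their translation lattice: rank $0$ gives finite groups, which are cyclic or dihedral; rank $1$ gives frieze groups; rank $2$ gives wallpaper groups) yields the stated list.

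The main obstacle I expect is the second step --- ruling out nontrivial scalings and carefully handling the stabilizer of the accumulation point. The delicate points are that the argument must use property (4) (\emph{one} accumulation point) in an essential way to exclude similarities, and that one must confirm the fixed-point and discreteness arguments behave correctly when some circles of $B$ are lines or pass through the accumulation point. I would also need to verify that the orbit structure genuinely produces the full translation lattice when $B$ is $1$- or $2$-periodic, so that the frieze and wallpaper cases actually occur and are not degenerate; this amounts to checking that periodicity of the configuration is reflected faithfully in $\Gamma_2$, which follows once discreteness and the isometry property are in hand.
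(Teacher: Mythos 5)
Your proposal is correct and follows essentially the same route as the paper's proof: conjugate the unique accumulation point to $\infty$, observe that every element of $\Gamma_2$ is then a similarity $z\mapsto az+b$ or $z\mapsto a\bar z+b$, exclude $|a|\neq 1$ because the second fixed point would be an attracting fixed point for $f$ or $f^{-1}$ and hence a forbidden second accumulation point, deduce discreteness from the absence of accumulation points in $\C$, and invoke the classification of discrete groups of plane isometries. The only step the paper makes explicit that you gloss is in the finite case: to conclude that $\Gamma_2\cong\textsc{Aut}(G_B)$ is literally the symmetry group of a polyhedron (a group of rigid motions), the paper uses the canonical midsphere embedding of the polyhedron, in which every graph automorphism is realized as a rigid motion preserving the sphere.
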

\noindent A frieze group is a discrete group of isometries of the plane which contains translations in one direction; a wallpaper group is a discrete group of isometries of the plane which contains translations in two linearly independent directions.
\begin{proof}
If $B$ is finite, then we have shown that $G_B$ is the graph of a polyhedron. By the Koebe-Andreev-Thurston theorem, the circle configuration $B$ with this graph is unique up to M\"{o}bius transformations. Moreover, any graph automorphism of $G_B$ gives rise to a permutation of the circles in $G_B$, which must be realized by a M\"{o}bius transformation in $\Gamma_2$. Conversely, any element of $\Gamma_2$ determines a graph automorphism of $G_B$. So $\Gamma_2 \equiv \textsc{Aut}(G_B)$. There is a three-dimensional realization of the polyhedron $G_B$, called the canonical embedding, with all edges tangent to the unit sphere, such that every automorphism of $G_B$ is realized as a rigid motion of this polyhedron preserving the sphere \cite[Thm. 4.13]{Ziegler}. Thus $\Gamma_2$ is the group of symmetries of a polyhedron, or a finite group of isometries of the sphere.

If $B$ is infinite, then we may apply a M\"{o}bius transformation to place the unique accumulation point of $B$ at $\infty$. All symmetries of $B$ must map the accumulation point to itself, so they must have the form $f(z)= az+b$ or $f(z)= a \bar{z}+b$. If $|a| \neq 1$, then such a map has a fixed point in $\C$, which will be an attracting fixed point for $f$ or $f^{-1}$. This produces an additional accumulation point for $B$, a contradiction. Thus $\Gamma_2$ consists of maps $f(z)= az+b$ or $f(z)= a \bar{z}+b$ with $|a|=1$, which are isometries of the plane $\C$. Again because $B$ has no accumulation points in $\C$, $\Gamma_2$ must be a discrete group of isometries of $\C$. The rest of the theorem follows from the classification of these groups, see \cite{Coxeter}.
\end{proof} 

In Section 6, we will show that each of the above possibilities for $\Gamma_2$ is in fact realized by an appropriate choice of $B$.

In order to understand the interactions of $\Gamma_1$ and $\Gamma_2$, we need further geometric information about their action on packings. Any circle in $\mathscr{P}$ is $c=\sigma_{d_1}\cdots\sigma_{d_k}(c_0)$ for some $c_0 \in B$, $d_1, \ldots d_k \in \hat{B}$. Define the height $\mathrm{ht}(c)$ as the minimum $k$ for which such an expression exists. A circle has height $0$ if and only if it is in $B$. 

\begin{lemma} \label{heightdecrease}
Suppose $c \in \mathscr{P}$ has $\mathrm{ht}(c)>0$. For $d \in \hat{B}$, $\mathrm{ht}(\sigma_d(c))<\mathrm{ht}(c)$ if and only if $c$ is inside $d$. 
\end{lemma}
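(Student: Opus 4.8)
The plan is to prove both directions at once by identifying, for each circle $c \in \mathscr{P}$ with $\mathrm{ht}(c)=k>0$, a single dual circle that ``detects'' $c$. Fix a minimal expression $c=\sigma_{d_1}\cdots\sigma_{d_k}(c_0)$ with $c_0\in B$ and consecutive $d_i$ distinct. First I would recall, from the proof of Proposition \ref{dualOrInside} together with the interior-nesting bookkeeping in the proof of Proposition \ref{disjointinteriors}, that the interior of $c$ is contained in the interior of $d_1$. The first substantive step is a uniqueness claim: $d_1$ is the \emph{only} circle in $\hat{B}$ whose interior contains the interior of $c$. This follows because the circles of $\hat{B}$ have pairwise disjoint interiors by property (1) of Definition \ref{basedual}; if the interior of $c$ lay in the interiors of two distinct dual circles, those interiors would meet. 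Consequently the leading letter $d_1$ is independent of the chosen minimal word, and for $d\in\hat{B}$ we have that $c$ is inside $d$ if and only if $d=d_1$.

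The second step controls the height. For $d=d_1$, the relation $\sigma_{d_1}^2=1$ (Theorem \ref{gamma1}) gives $\sigma_{d_1}(c)=\sigma_{d_2}\cdots\sigma_{d_k}(c_0)$, so $\mathrm{ht}(\sigma_{d_1}(c))\le k-1$; combined with the general inequality $\mathrm{ht}(c)\le \mathrm{ht}(\sigma_{d_1}(c))+1$, this forces $\mathrm{ht}(\sigma_{d_1}(c))=k-1<k$. Thus both sides of the biconditional hold when $d=d_1$. For $d\neq d_1$ I would show both sides fail. Since the interior of $c$ lies in the interior of $d_1$, which is disjoint from the interior of $d$, the interiors of $c$ and $d$ are disjoint; hence $c$ is not inside $d$ (the right-hand side is false), and in fact $c$ lies outside $d$, so $\sigma_d(c)$ lies inside $d$. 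To see the height cannot drop, suppose for contradiction that $\mathrm{ht}(\sigma_d(c))<k$. Because $\sigma_d(c)$ lies inside $d$ it has positive height, so by the uniqueness step its own minimal word must begin with $\sigma_d$; applying $\sigma_d$ to recover $c$ and cancelling the leading $\sigma_d$ would then exhibit $c$ with height at most $\mathrm{ht}(\sigma_d(c))-1<k$, a contradiction. Hence $\mathrm{ht}(\sigma_d(c))\ge k$ and the left-hand side is false as well, completing the equivalence.

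The step I expect to be the main obstacle is the geometric bookkeeping underlying the containments: verifying that the chosen interior of $c$ genuinely nests inside the interior of $d_1$ with the correct orientation, and that ``$c$ outside $d$'' reflects to ``$\sigma_d(c)$ inside $d$.'' These are precisely the nesting facts already exploited in Propositions \ref{dualOrInside} and \ref{disjointinteriors}, so I would reuse that machinery rather than re-derive it, taking care only to track orientations. One minor edge case to dispatch is ruling out that $c$ coincides, as an unoriented circle, with a circle of $\hat{B}$; this is excluded by the orthogonality-or-disjointness dichotomy built into property (3) of Definition \ref{basedual}, since a positive-height circle strictly inside $d_1$ can neither be orthogonal to nor equal to any dual circle.
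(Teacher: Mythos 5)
Your proof is correct and takes essentially the same route as the paper's: a minimal word $\sigma_{d_1}\cdots\sigma_{d_k}(c_0)$ nests $c$ inside $d_1$, disjointness of dual interiors (property (1)) makes $d_1$ the unique dual circle containing $c$, and cancelling the leading reflection lowers the height while reflecting in any $d\neq d_1$ raises it. Your contradiction argument in the $d\neq d_1$ case is the paper's one-line observation $\mathrm{ht}(c)=\mathrm{ht}(\sigma_d\sigma_d(c))<\mathrm{ht}(\sigma_d(c))$ stated contrapositively, so the two proofs coincide in substance.
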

\begin{proof}
Say that $c=\sigma_{d_1}\cdots\sigma_{d_k}(c_0)$ and that this expression is minimal. Then $c_0$ is not orthogonal to $d_k$, so it is outside $d_k$. Thus $\sigma_{d_k}(c_0)$ is inside $d_k$, and hence outside $d_{k-1}$. Repeating inductively, we conclude that $c$ is inside $d_1$. Since the circles of $\hat{B}$ have disjoint interiors, $d_1$ is the unique circle in $\hat{B}$ whose interior contains $c$. If we reflect through $d_1$, the height of $c$ will be lowered. 

On the other hand, if we reflect through any other circle $d \in \hat{B}$, $\sigma_d(c)$ will be inside $d$. Then $\mathrm{ht}(c) = \mathrm{ht}(\sigma_d\sigma_d(c))<\mathrm{ht}(\sigma_d(c))$, so the height of $c$ will be raised.
\end{proof}

\begin{lemma}\label{relativePosTrans}
For any $g\in\Gamma$ and $d\in\hat{B}$, $g(B)$ is either completely inside $d$ or completely outside $d$. (Some circles of $g(B)$ are allowed to be orthogonal to $d$ in either case.)
\end{lemma}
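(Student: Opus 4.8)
The plan is to transport the problem back to the base configuration. Since $g\in\Gamma$ preserves $\hat{\mathscr{P}}$ and $d\in\hat{B}\subseteq\hat{\mathscr{P}}$, the circle $\tilde d:=g^{-1}(d)$ again lies in $\hat{\mathscr{P}}$. As $g$ is a homeomorphism of $\hat{\C}$ carrying $B$ to $g(B)$ and the two sides of $\tilde d$ to the two sides of $d$ (and preserving orthogonality), the claim that $g(B)$ is entirely inside or entirely outside $d$, orthogonal circles excepted, is equivalent to the statement that \emph{$\tilde d$ does not separate $B$}: every base circle not orthogonal to $\tilde d$ lies weakly on a single one of the two sides of $\tilde d$. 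It therefore suffices to prove this for an arbitrary $\tilde d\in\hat{\mathscr{P}}$.

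First I would establish a trichotomy: each $b\in B$ is inside $\tilde d$, outside $\tilde d$, or orthogonal to $\tilde d$; equivalently, no base circle meets $\tilde d$ transversally at an angle other than $\pi/2$. The key observation is that reflection across $\tilde d$ itself lies in $\Gamma_1$: writing $\tilde d=w(e_0)$ with $w\in\Gamma_1$ and $e_0\in\hat{B}$, we have $\sigma_{\tilde d}=w\,\sigma_{e_0}\,w^{-1}\in\Gamma_1\subseteq\Gamma$, so $\sigma_{\tilde d}(b)\in\sigma_{\tilde d}(B)\subseteq\mathscr{P}$. If some $b$ crossed $\tilde d$ transversally at an angle $\theta\neq\pi/2$, then $\sigma_{\tilde d}(b)$ would be a \emph{distinct} circle of $\mathscr{P}$ passing through the crossing points with reflected tangent directions, hence meeting $b$ transversally there; two transversally crossing circles necessarily have overlapping interiors, contradicting Proposition \ref{disjointinteriors}. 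Thus a crossing can only be orthogonal, and otherwise $b$ lies entirely on one side of $\tilde d$.

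It then remains to rule out separation, for which I would invoke Proposition \ref{dualOrInside}: either $\tilde d\in\hat{B}$, or $\tilde d$ lies inside some $e\in\hat{B}$. In the first case property (3) of Definition \ref{basedual} forces each base circle to be orthogonal to $\tilde d$ or to have interior disjoint from that of $\tilde d$, i.e.\ to lie outside $\tilde d$, so all non-orthogonal base circles share a side. In the second case, let $Y$ be the component of $\hat{\C}\setminus\tilde d$ not containing the connected set $\operatorname{ext}(e)$; since $\tilde d$ lies inside $e$ we have $Y\subseteq\operatorname{int}(e)$. Here I would use the sub-lemma that \emph{no base circle is contained in the closed disk $\overline{\operatorname{int}(e)}$}: if $b\subseteq\overline{\operatorname{int}(e)}$ then either $b$ meets $e$, which by property (3) would force an orthogonal hence transversal crossing, impossible for a circle confined to $\overline{\operatorname{int}(e)}$; or $b$ is disjoint from $e$, whence $b\subseteq\operatorname{int}(e)$ and an elementary check shows $\operatorname{int}(b)$ meets $\operatorname{int}(e)$, again violating property (3). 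Consequently no base circle lies in $\overline{Y}$, so by the trichotomy every non-orthogonal base circle lies on the opposite side; $\tilde d$ does not separate $B$. Applying $g$ yields the lemma.

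I expect the trichotomy to be the delicate step: the genuine danger is a base circle slicing obliquely across $\tilde d$, and excluding this is precisely what the identity $\sigma_{\tilde d}\in\Gamma_1$, combined with the disjoint-interiors property of $\mathscr{P}$, accomplishes. The rest — verifying that an oblique crossing produces genuinely overlapping interiors, and the elementary sub-lemma about $\overline{\operatorname{int}(e)}$ — is routine planar topology, and notably the argument needs no separate treatment of the accumulation point, since it never relies on Proposition \ref{tangencypoint} or on finiteness.
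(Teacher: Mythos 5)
Your proof is correct, and its skeleton is the same as the paper's: both arguments pull $d$ back to $\tilde d = g^{-1}(d)\in\hat{\mathscr{P}}$, invoke Proposition \ref{dualOrInside} to place $\tilde d$ either in $\hat{B}$ or inside some $e\in\hat{B}$, observe that all of $B$ then lies weakly on one side of $\tilde d$, and push forward by $g$. The genuine difference is your trichotomy step, which the paper does not have. The paper's proof is three lines: it declares orthogonal circles to count as both inside and outside, notes that every circle of $B$ is ``completely outside'' the enclosing dual circle by property (3) of Definition \ref{basedual}, and concludes that $B$ is outside $g^{-1}(d)$ as well. That inference is immediate for base circles lying in the closed exterior of the enclosing circle $e$, but it silently passes over base circles \emph{orthogonal} to $e$, whose arcs dip into the disk of $e$ where $g^{-1}(d)$ lives and could a priori cross $g^{-1}(d)$ obliquely. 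Your observation that $\sigma_{\tilde d}=w\sigma_{e_0}w^{-1}\in\Gamma_1$ (via Lemma \ref{image_of_dual}), so that an oblique crossing of $\tilde d$ by some $b\in B$ would produce two distinct circles of $\mathscr{P}$ meeting transversally at two points, contradicting Proposition \ref{disjointinteriors}, closes exactly this case; it in fact proves the stronger reusable fact that no circle of $\mathscr{P}$ meets any circle of $\hat{\mathscr{P}}$ at an angle other than $\pi/2$. As for what each approach buys: the paper's version is shorter because the oriented-interior bookkeeping of Definition \ref{basedual} does the topological work for it (disjoint interiors of $b$ and a dual circle already force $b$ into that circle's closed exterior), whereas your unoriented argument with the component $Y$ and the sub-lemma that no base circle is contained in $\overline{\operatorname{int}(e)}$ is more laborious but self-contained, avoids assigning an orientation to $g^{-1}(d)$ (which the paper itself notes is ambiguous for circles of the dual packing), and makes explicit the separation argument that the paper compresses into a single containment claim.
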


\begin{proof}
For convenience in this proof, we consider a circle orthogonal to $d$ as being both inside and outside $d$. Since $d$ is a dual circle and $g$ preserves $\hat{\mathscr{P}}$, we know that $g^{-1}(d)\in\hat{\mathscr{P}}$. By Proposition \ref{dualOrInside}, 
we know that for some $\Tilde{d}\in\hat{B}$,  $g^{-1}(d)$ is either $\Tilde{d}$ 
or inside $\Tilde{d}$. All circles in $B$ are completely outside $\Tilde{d}$ by definition, so they are outside $g^{-1}(d)$ as well. Applying $g$ to both $B$ and $g^{-1}(d)$, 
we have that all circles in $g(B)$ are completely inside or completely outside $d$.
\end{proof}

\begin{lemma}\label{image_of_dual}
Let $g$ be a M\"{o}bius transformation, $d$ be a circle, and $\sigma_d$ be the reflection across $d$. We have $g\sigma_d g^{-1} = \sigma_{g(d)}$.
\end{lemma}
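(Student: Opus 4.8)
The plan is to prove the conjugation identity $g\sigma_d g^{-1} = \sigma_{g(d)}$ by the standard strategy of characterizing a reflection abstractly and then verifying that the left-hand side satisfies that characterization. The key observation is that the reflection $\sigma_d$ is the unique nontrivial M\"{o}bius transformation that fixes every point of the circle $d$ pointwise and is an involution (equivalently, it is the unique anticonformal involution whose fixed-point set is exactly $d$). So it suffices to show that $g\sigma_d g^{-1}$ is an involution fixing $g(d)$ pointwise, and then invoke uniqueness.

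First I would verify that $h := g\sigma_d g^{-1}$ is an involution: since $\sigma_d^2 = \mathrm{id}$, we have $h^2 = g\sigma_d g^{-1} g \sigma_d g^{-1} = g\sigma_d^2 g^{-1} = g g^{-1} = \mathrm{id}$. Next I would check the fixed-point set. Take any point $w$ on the circle $g(d)$; write $w = g(z)$ where $z = g^{-1}(w)$ lies on $d$. Because $\sigma_d$ fixes every point of $d$, we have $\sigma_d(z) = z$, and therefore
\[
h(w) = g\sigma_d g^{-1}(w) = g\sigma_d(z) = g(z) = w.
\]
Thus $h$ fixes $g(d)$ pointwise. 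I would also note that $h$ is anticonformal (it is the conjugate of the anticonformal map $\sigma_d$ by the conformal or anticonformal map $g$, and conjugation preserves the conformal/anticonformal type appropriately, so $h$ reverses orientation), which guarantees $h \neq \mathrm{id}$ and pins it down as a reflection rather than some other involution.

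The final step is to conclude by the uniqueness of reflections: among M\"{o}bius transformations, the reflection $\sigma_{g(d)}$ across the circle $g(d)$ is the unique anticonformal involution whose fixed locus is precisely the circle $g(d)$. Since $h = g\sigma_d g^{-1}$ is an anticonformal involution fixing $g(d)$ pointwise, and an anticonformal involution is determined by its circle of fixed points, we get $h = \sigma_{g(d)}$, as desired. I expect no serious obstacle here; the only point requiring a little care is confirming that $h$ is genuinely anticonformal (so that it is a reflection and not the identity or a conformal involution), and that the fixed-point set of an anticonformal M\"{o}bius involution is exactly a generalized circle, which is the uniqueness fact being used. This is a standard property of $\mathrm{M\ddot{o}b}$ and can be cited or checked by transporting to the normalized case where $d$ is the real line and $\sigma_d$ is complex conjugation.
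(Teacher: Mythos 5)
Your proof is correct and is essentially the paper's argument in disguise: the paper simply observes that $g\sigma_d g^{-1}\sigma_{g(d)}$ is a holomorphic M\"{o}bius transformation fixing the circle $g(d)$ pointwise, hence the identity, which is precisely the uniqueness fact your final step invokes. The only difference is cosmetic --- your involution check $h^2=\mathrm{id}$ is superfluous, since anticonformality of $h$ together with its fixing $g(d)$ pointwise already makes $h\sigma_{g(d)}$ a holomorphic map fixing a circle pointwise, hence the identity.
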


\begin{proof}
The map $g \sigma_d g^{-1} \sigma_{g(d)}$ is a holomorphic M\"{o}bius transformation which fixes the circle $g(d)$, so it is the identity. Thus $g\sigma_d g^{-1} = \sigma_{g(d)}$.
\end{proof}

\begin{proposition}\label{Gamma}
For a circle packing $\mathscr{P}$ with base configuration $B$ and dual configuration $\hat{B}$, we have $\Gamma=\left\langle \Gamma_{1},\Gamma_{2}\right\rangle$.
\end{proposition}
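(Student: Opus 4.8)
The plan is to prove the two inclusions separately. The inclusion $\langle \Gamma_1, \Gamma_2\rangle \subseteq \Gamma$ is the routine direction. Each generator $\sigma_d$ with $d\in\hat B$ permutes $\mathscr P = \Gamma_1\cdot B$ and $\hat{\mathscr P}=\Gamma_1\cdot\hat B$ by construction, so $\Gamma_1\subseteq\Gamma$. For $g\in\Gamma_2$ I would use Lemma \ref{image_of_dual}: since $g$ preserves $\hat B$, conjugation $g\sigma_d g^{-1}=\sigma_{g(d)}$ permutes the generators of $\Gamma_1$, so $g\Gamma_1 g^{-1}=\Gamma_1$; combined with $g(B)=B$ and $g(\hat B)=\hat B$ this gives $g(\mathscr P)=g\Gamma_1 g^{-1}\cdot g(B)=\Gamma_1\cdot B=\mathscr P$, and likewise $g(\hat{\mathscr P})=\hat{\mathscr P}$. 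Hence $\Gamma_2\subseteq\Gamma$ and therefore $\langle\Gamma_1,\Gamma_2\rangle\subseteq\Gamma$.

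For the reverse inclusion, fix $g\in\Gamma$ and a single base circle $c_0\in B$. The idea is to use reflections from $\Gamma_1$ to drive $g(c_0)$ down to height $0$, and then to show this automatically forces the whole configuration $g(B)$ back onto $B$. Concretely, if $\mathrm{ht}(g(c_0))>0$, then by the proof of Lemma \ref{heightdecrease} there is a unique $d\in\hat B$ with $g(c_0)$ strictly inside $d$, and $\mathrm{ht}(\sigma_d g(c_0))<\mathrm{ht}(g(c_0))$. Replacing $g$ by $\sigma_d g\in\Gamma$ and iterating, after finitely many steps I obtain $w\in\Gamma_1$ with $h:=wg\in\Gamma$ satisfying $h(c_0)\in B$. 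Here the essential point, and the reason a single circle controls the argument, is Lemma \ref{relativePosTrans}: $g(B)$ lies entirely on one side of each dual circle, so the reflection that lowers the height of $g(c_0)$ is forced and is consistent with the rest of $g(B)$.

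It then remains to promote $h(c_0)\in B$ to $h(B)=B$. Since $h(c_0)$ is a base circle it is disjoint from or orthogonal to every $d\in\hat B$, hence not strictly inside any of them; by Lemma \ref{relativePosTrans} the whole configuration $h(B)$ lies outside (or orthogonal to) every dual circle, so no circle of $h(B)$ is strictly interior to any $d$. By Lemma \ref{heightdecrease} every such circle has height $0$, i.e. lies in $B$, giving $h(B)\subseteq B$. Applying the same reasoning to $h^{-1}\in\Gamma$, which sends the base circle $h(c_0)$ to $c_0\in B$, yields $h^{-1}(B)\subseteq B$, and the two inclusions force $h(B)=B$. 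Finally, a M\"obius transformation preserving $B$ preserves the planar tangency complex $G_B$ and its faces; since by Proposition \ref{cover} each dual circle is the circle inscribed in a face and orthogonal to the surrounding base circles, $h$ permutes the dual circles and $h(\hat B)=\hat B$. Thus $h\in\textsc{Sym}(B,\hat B)=\Gamma_2$ and $g=w^{-1}h\in\langle\Gamma_1,\Gamma_2\rangle$.

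I expect the main obstacle to be the passage from one circle to the entire configuration in the last paragraph---verifying rigorously that ``$h(c_0)\in B$'' upgrades to ``$h(B)=B$,'' and in particular handling the borderline orthogonal circles correctly in the inside/outside dichotomy of Lemma \ref{relativePosTrans}. The claim that preserving $B$ automatically preserves $\hat B$ also deserves care, since it relies on the dual circle of each face being uniquely determined by the base configuration (via Proposition \ref{cover}); for infinite $B$ one should confirm this canonical determination holds face-by-face.
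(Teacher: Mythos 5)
Your easy direction and your height descent on a single circle are both sound, and your double-inclusion idea for upgrading $h(B)\subseteq B$ to $h(B)=B$ is a legitimate alternative to the face-gluing argument the paper uses at the corresponding point. But the step you yourself flagged as the main obstacle is a genuine gap, and it is fatal as written: from $h(c_0)\in B$ you cannot conclude via Lemma \ref{relativePosTrans} that $h(B)$ lies outside every dual circle. The dichotomy in that lemma explicitly counts circles orthogonal to $d$ as being on \emph{either} side, so for any dual circle $d$ orthogonal to $h(c_0)$ --- and by Proposition \ref{ringed} every base circle is orthogonal to at least three dual circles --- the position of $h(c_0)$ gives no information about which alternative holds for $h(B)$. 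Concretely, take $g=\sigma_d$ for some $d\in\hat{B}$ and choose $c_0\in B$ orthogonal to $d$. Then $g(c_0)=c_0$ already has height $0$, so your descent terminates immediately with $w=1$ and $h=\sigma_d$; yet $h(B)=\sigma_d(B)$ is completely \emph{inside} $d$, your claim that $h(B)$ avoids the interior of every dual circle fails, and your chain of deductions would end with the false conclusion $\sigma_d\in\Gamma_2$. A single tracked circle therefore cannot control the whole configuration.

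The paper closes exactly this loophole by tracking a finite set $S$ of circles of $g(B)$ chosen so that no circle in $\hat{\C}$ is orthogonal to all of them (three circles ringing a common dual circle together with a fourth outside the ring). At each reflection the height of every circle of $S$ weakly decreases and at least one strictly decreases, since height can stall only for circles orthogonal to the reflecting dual circle and $d$ cannot be orthogonal to all of $S$; so the descent terminates with all of $S$ in $B$, and at that stage, for each dual circle $d$, some member of $S$ is \emph{strictly} outside $d$, which pins the dichotomy of Lemma \ref{relativePosTrans} to the outside alternative for all of $h(B)$. If you replace $c_0$ by such a set $S$, the rest of your outline can be repaired: once $h(B)\subseteq B$ is secured, note that $B\subseteq h^{-1}(B)$, so $h^{-1}(B)$ contains, for each dual circle $d$, a base circle strictly outside $d$; Lemma \ref{relativePosTrans} applied to $h^{-1}\in\Gamma$ then puts $h^{-1}(B)$ outside every dual circle, Proposition \ref{dualOrInside} gives $h^{-1}(B)\subseteq B$, and the two inclusions yield $h(B)=B$ without any second descent. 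Your final identification $h(\hat{B})=\hat{B}$, via the uniqueness of the circle orthogonal to each ring of base circles, matches the paper's Case 1 and is fine.
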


\begin{proof}
By definition, we have $\Gamma_{1}\leq\Gamma$. For any $g\in\Gamma_{2}$, 
we know $g$ preserves $\hat{\mathscr{P}}$ because it preserves $\hat{B}$, 
and we also know $g$ preserves $\mathscr{P}$ because it preserves both $B$ and $\hat{B}$. Hence, we have $\Gamma_{2}\leq\Gamma$ and thus $\left\langle \Gamma_{1},\Gamma_{2}\right\rangle \subseteq\Gamma$. 

We now want to show that $\Gamma \subseteq \langle \Gamma_1, \Gamma_2 \rangle$. By Lemma \ref{relativePosTrans}, for any $g\in\Gamma$, we know $g(B)$ is either outside all dual circles or is completely inside some dual circle. 

Case 1: If $g(B)$ is outside all dual circles, then $g(B)\subseteq B$. Let $G_{B}$ be the tangency graph of $B$, and let $G_{g(B)}$ be the subgraph of $G_{B}$ that is also the tangency graph of $g(B)$. The faces of $G_{g(B)}$ are either faces of $G_B$ or unions of these faces. We will show that each face of $G_{g(B)}$ is a face of $G_B$, implying that $g(B)=B$. Notice that every face in $G_{g(B)}$ corresponds to a circle $g(\Tilde{d})\in\hat{\mathscr{P}}$ for some $\Tilde{d}\in\hat{B}$. Since $g(\Tilde{d})$ is ringed by circles from $g(B)$ and thus from $B$, we know $g(\Tilde{d})$ cannot be inside any dual circle. By Proposition \ref{dualOrInside}, then, we know that $g(\Tilde{d})$ is a dual circle, which implies that $g(B)=B$. Because $g$ preserves $G_{B}$, we know $g$ also preserves the dual graph of $G_{B}$, which is the tangency graph of $g(\hat{B})$. Let $G_{\hat{B}}$ be the dual graph of $G_{B}$. Since there is a unique circle orthogonal to a ring of circles, every vertex in $G_{\hat{B}}$ must correspond to a circle in $\hat{B}$. Therefore, we have $g(\hat{B})=\hat{B}$ and thus $g\in\Gamma_2$.

Case 2: If $g(B)$ is completely inside some dual circle $d_1 \in\hat{B}$, reflect across $d_1$ and apply Lemma \ref{relativePosTrans} to the configuration $\sigma_{d_1}g(B)$. If this configuration is outside all dual circles, then by Case 1, we conclude that $\sigma_{d_1}g\in\Gamma_2$. Otherwise, it lies inside some other dual circle $d_2$, and we can reflect across $d_2$ and repeat the argument. It suffices to show that this process eventually terminates, i.e. that $\sigma_{d_k}\cdots \sigma_{d_1}g(B)$ is outside all the dual circles for some $k$.

Consider a finite set $S$ of circles in $g(B)$ such that no circle in $\hat{\C}$ is orthogonal to all of them. Such a set can be obtained starting from three circles ringing a common dual circle, and then choosing a fourth which is not part of this ring. At each step of reflecting through a dual circle, the height of each circle in $S$ decreases or stays constant by Lemma \ref{heightdecrease}. Moreover, the latter possibility can only occur if a circle is orthogonal to the dual, so at least one circle's height decreases at each step. The process terminates when all the circles in $S$ reach height $0$, so they are mapped to $B$. When this occurs, the circles in $S$ are all orthogonal to or outside each circle in $\hat{B}$, and at least one is outside each dual circle. By Lemma \ref{relativePosTrans}, all circles in $g(B)$ are then outside or orthogonal to each dual circle.
\end{proof}

\begin{cor}\label{g1g2}
For any $g\in\Gamma$, there exists $g_1\in\Gamma_1$ and $g_2\in\Gamma_2$ such that $g=g_1g_2$.
\end{cor}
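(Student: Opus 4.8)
The plan is to extract the decomposition directly from the constructive proof of Proposition \ref{Gamma}, where the argument already produces exactly a product of a $\Gamma_1$-element and a $\Gamma_2$-element. The point of the corollary, beyond Proposition \ref{Gamma} itself, is that $\langle \Gamma_1, \Gamma_2 \rangle$ need not a priori consist of products of this simple two-factor form; a general element of a group generated by two subgroups is an arbitrarily long alternating word. So the content is that Proposition \ref{Gamma}'s generation can always be arranged with all of $\Gamma_1$ on the left and a single element of $\Gamma_2$ on the right. No new geometric input is needed.

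First I would recall that for any $g \in \Gamma$, the proof of Proposition \ref{Gamma} produces a finite sequence of dual circles $d_1, \ldots, d_k \in \hat{B}$ such that, after reflecting across them one at a time, the configuration $\sigma_{d_k} \cdots \sigma_{d_1} g(B)$ lies outside all dual circles. By Case 1 of that proof, this forces $\sigma_{d_k} \cdots \sigma_{d_1} g(B) = B$, and hence $\sigma_{d_k} \cdots \sigma_{d_1} g \in \Gamma_2$.

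Next I would set $g_2 := \sigma_{d_k} \cdots \sigma_{d_1} g \in \Gamma_2$. Since each $\sigma_{d_i}$ is an involution by Theorem \ref{gamma1}, inverting the word $\sigma_{d_k} \cdots \sigma_{d_1}$ produces $\sigma_{d_1} \cdots \sigma_{d_k}$, and solving for $g$ yields $g = \sigma_{d_1} \cdots \sigma_{d_k}\, g_2$. Setting $g_1 := \sigma_{d_1} \cdots \sigma_{d_k} \in \Gamma_1$ then gives the desired decomposition $g = g_1 g_2$.

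The only step requiring care is the termination of the reflection process, but this has already been carried out inside Proposition \ref{Gamma} via the height-decrease argument of Lemma \ref{heightdecrease} applied to a finite \emph{rigid} set $S$ of base circles admitting no common orthogonal circle. Because that finiteness argument is already in hand, here there is no genuine obstacle: the corollary simply records the normal form implicit in the termination step, so the proof amounts to naming $g_1$ and $g_2$ and invoking the involution relations of $\Gamma_1$.
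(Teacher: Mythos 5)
Your proposal is correct and follows essentially the same route as the paper: the paper's proof likewise extracts the factorization $g=\sigma_{d_1}\cdots\sigma_{d_k}\,g_2$ with $g_2\in\Gamma_2$ directly from the termination of the reflection process in the proof of Proposition \ref{Gamma}, and sets $g_1=\sigma_{d_1}\cdots\sigma_{d_k}\in\Gamma_1$. Your write-up simply makes explicit the inversion of the word via the involution relations, which the paper leaves implicit.
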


\begin{proof}
By the proof of Theorem \ref{Gamma}, every $g\in\Gamma$ is of the form $g=\sigma_{d_1}\cdots\sigma_{d_k}g_2$ for some sequence of $d_i\in\hat{B}$ and $g_2\in\Gamma_2$. Let $g_1= \sigma_{d_1}\cdots\sigma_{d_k}$. By definition, we know that $g_1\in\Gamma_1$, and $g = g_1g_2$.
\end{proof}

\begin{proposition}\label{normal}
$\Gamma_1$ is a normal subgroup of $\Gamma$.
\end{proposition}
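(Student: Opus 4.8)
The plan is to build on the structural results already in hand, namely $\Gamma = \langle \Gamma_1, \Gamma_2 \rangle$ from Proposition \ref{Gamma}, together with Lemma \ref{image_of_dual}. Since normality of a subgroup can be tested on any generating set of the ambient group, and $\Gamma$ is generated by $\Gamma_1$ together with $\Gamma_2$, it suffices to check that conjugating $\Gamma_1$ by an arbitrary element of $\Gamma_1$ or of $\Gamma_2$ sends it back into $\Gamma_1$. Conjugation by an element of $\Gamma_1$ is immediate, because $\Gamma_1$ is a subgroup and conjugation by one of its own elements permutes it. So the whole content of the proof reduces to conjugation by elements of $\Gamma_2$.

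For this, I would fix $g_2 \in \Gamma_2$ and a generator $\sigma_d$ with $d \in \hat{B}$, and apply Lemma \ref{image_of_dual} to rewrite $g_2 \sigma_d g_2^{-1} = \sigma_{g_2(d)}$. The key observation is that $g_2 \in \Gamma_2 = \textsc{Sym}(B, \hat{B})$ preserves the dual configuration $\hat{B}$, so $g_2(d)$ is again a circle of $\hat{B}$. Hence $\sigma_{g_2(d)}$ is itself one of the defining generators of $\Gamma_1$, giving $g_2 \sigma_d g_2^{-1} \in \Gamma_1$ and therefore $g_2 \Gamma_1 g_2^{-1} \subseteq \Gamma_1$.

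To promote this containment to the equality required for genuine normality, I would observe that $g_2^{-1}$ also lies in $\Gamma_2$, so running the same argument with $g_2^{-1}$ yields $g_2^{-1} \Gamma_1 g_2 \subseteq \Gamma_1$, equivalently $\Gamma_1 \subseteq g_2 \Gamma_1 g_2^{-1}$; combining the two inclusions gives $g_2 \Gamma_1 g_2^{-1} = \Gamma_1$ for every $g_2 \in \Gamma_2$. Together with the trivial invariance under $\Gamma_1$-conjugation, this shows $\Gamma_1$ is stable under conjugation by a full generating set of $\Gamma$, which yields $\Gamma_1 \trianglelefteq \Gamma$. I do not expect a real obstacle here: the only point requiring any care is the standard reduction of normality to a check on generators, and the accompanying need to use that $\Gamma_2$ is closed under inverses so that both inclusions, rather than just one, are available.
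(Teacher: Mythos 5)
Your proof is correct and follows essentially the same route as the paper: both reduce via Proposition \ref{Gamma} to checking that $\Gamma_2$ normalizes $\Gamma_1$ on generators, then apply Lemma \ref{image_of_dual} together with the fact that $\Gamma_2$ preserves $\hat{B}$ to conclude $g\sigma_d g^{-1}=\sigma_{g(d)}\in\Gamma_1$. Your extra step of running the argument with $g^{-1}$ to upgrade the inclusion to an equality is a small point of care that the paper leaves implicit, but the substance is identical.
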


\begin{proof}
By Proposition \ref{Gamma}, it suffices to show that $\Gamma_2$ normalizes $\Gamma_1$. Let $g \in \Gamma_2$, and let $\sigma_d$ be a generator of $\Gamma_1$. By Lemma \ref{image_of_dual}, $g\sigma_d g^{-1} = \sigma_{g(d)}$. Since $g(d) \in \hat{B}$, this is an element of $\Gamma_1$. Thus $\Gamma_2$, and hence $\Gamma$, normalizes $\Gamma_1$. 
\end{proof} 

In general, $\Gamma_2$ is not a normal subgroup of $\Gamma$. 
Let $g_2$ be an element in $\Gamma_2$ that sends some dual circle $d\in\hat{B}$ to a different dual circle in $\hat{B}$. The element $\sigma_{d}g_2\sigma_{d}^{-1}\in\Gamma$ sends $d$ to a circle inside $d$, which means that $\sigma_{d}g_2\sigma_{d}^{-1}$ doesn't preserve $\hat{B}$ and thus $\sigma_{d}g_2\sigma_{d}^{-1}\not\in\Gamma_2$.

\begin{proposition}\label{intersection}
The intersection of $\Gamma_1$ and $\Gamma_2$ is trivial.
\end{proposition}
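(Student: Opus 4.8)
The plan is to take an arbitrary $g \in \Gamma_1 \cap \Gamma_2$ and show it must be the identity, using the height machinery from Lemmas \ref{heightdecrease} and \ref{relativePosTrans} together with the reduced-word structure from Theorem \ref{gamma1}. Since $g \in \Gamma_1$, I can write $g = \sigma_{d_1} \cdots \sigma_{d_k}$ with consecutive $d_i \in \hat{B}$ distinct; by Theorem \ref{gamma1} this is a reduced word, so if $g \neq 1$ then $k \geq 1$. The goal is to produce a base circle $c_0 \in B$ whose image $g(c_0)$ lands strictly inside the dual circle $d_1$, hence has positive height and cannot belong to $B$. This contradicts $g \in \Gamma_2 = \textsc{Sym}(B,\hat{B})$, which forces $g(B) = B$ and in particular $g(c_0) \in B$.

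The central input is a base circle $c_0$ that is \emph{not} orthogonal to the last reflection $d_k$. I would establish existence of such a $c_0$ by contradiction: if every circle in $B$ were orthogonal to $d_k$, then after a M\"{o}bius transformation sending $d_k$ to the real line, every base circle would be centered on the real axis and cross it perpendicularly. By the disjoint-interiors property (1), the real-axis intervals cut out by these circles must be disjoint or tangent, so the circles form a linear chain and each is tangent to at most two others. Then $G_B$ would have maximum degree $2$, contradicting $3$-connectivity (Proposition \ref{3Connected}). Hence some $c_0 \in B$ is not orthogonal to $d_k$, and by property (3) its interior is disjoint from the interior of $d_k$, i.e.\ $c_0$ lies outside $d_k$.

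With $c_0$ in hand, the argument is exactly the inductive reflection computation from the proof of Proposition \ref{dualOrInside}. Since $c_0$ is outside $d_k$, the interior of $\sigma_{d_k}(c_0)$ is genuinely nested inside the interior of $d_k$; by property (1) it then lies outside $d_{k-1}$ (using $d_{k-1} \neq d_k$), so $\sigma_{d_{k-1}}\sigma_{d_k}(c_0)$ is nested inside $d_{k-1}$, and so on. Iterating, the interior of $g(c_0)$ is strictly contained in the interior of $d_1$. But by property (3), every base circle either crosses $d_1$ orthogonally (interiors only partially overlapping) or has interior disjoint from that of $d_1$; neither case allows the interior to be fully nested inside $d_1$. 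Therefore $g(c_0) \notin B$, contradicting $g(B) = B$. This forces $k = 0$, i.e.\ $g = 1$, so $\Gamma_1 \cap \Gamma_2$ is trivial.

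I expect the one genuinely nontrivial step to be the existence of a base circle not orthogonal to $d_k$; the reflection/nesting induction is a direct reuse of machinery already proved, but care is needed there to distinguish genuine interior-nesting (which $g(c_0)$ achieves) from the weaker notion of ``inside'' flagged in the remark after Proposition \ref{dualOrInside}, since it is precisely the strict nesting that rules out $g(c_0)$ being a base circle.
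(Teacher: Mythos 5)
Your proof is correct in essence, but it routes the contradiction through the base configuration where the paper routes it through the dual configuration, and the paper's choice is strictly cheaper. The paper applies the reduced word $\sigma_{d_1}\cdots\sigma_{d_k}$ to a dual circle $d \neq d_k$: since the circles of $\hat{B}$ are pairwise disjoint or tangent with disjoint interiors (property (1) of Definition \ref{basedual}), no orthogonality can occur between dual circles, so $\sigma_{d_k}(d)$ is automatically nested strictly inside $d_k$; the same inductive nesting you borrow from Proposition \ref{dualOrInside} then pushes the image strictly inside $d_1$, so the word fails to preserve $\hat{B}$ and hence lies outside $\Gamma_2$. By targeting a dual circle instead of a base circle, the step you correctly flag as the only nontrivial one --- producing a $c_0 \in B$ not orthogonal to $d_k$ --- disappears entirely: any $d \neq d_k$ works, and such a $d$ exists because $G_{\hat{B}}$ is nontrivial and connected. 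Your route is sound but pays for an existence lemma the paper never needs; your closing observation about strict interior-nesting versus the weak ``inside'' of Proposition \ref{dualOrInside} is exactly the right point in both versions.

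Your existence argument itself has two small blemishes worth fixing. First, after normalizing $d_k$ to the real line, a base circle orthogonal to it need not be centered on the axis: it can be a vertical line, and two parallel vertical lines are tangent at $\infty$, so the configuration need not be a linear chain --- it can close into a cycle through $\infty$. For instance, the lines $x=0$ and $x=2$ together with the circle of radius $1$ centered at $(1,0)$ are pairwise tangent with disjoint interiors and all orthogonal to $\R$, realizing $G_B \cong K_3$. Second, under the paper's stated definition of $n$-connectivity (deleting any $n-1$ vertices leaves the graph connected), $K_3$ \emph{is} $3$-connected despite having all degrees equal to $2$, so ``maximum degree $2$ contradicts Proposition \ref{3Connected}'' does not quite exclude this case. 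The clean repair is to cite the degree bound established in the proof of Proposition \ref{ringed} instead: every vertex of $G_B$ is incident to at least three edges, since otherwise $G_{\hat{B}}$ would contain a loop or a double edge and could not be simple. That bound contradicts maximum degree $2$ outright, and with this substitution your argument is complete.
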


\begin{proof}
Let $\sigma_{d_1}\cdots \sigma_{d_k} \in\Gamma_1$ with consecutive $d_i$ distinct. Choose a dual circle $d\neq d_k$ in $\hat{B}$. As in the proof of Prop. \ref{dualOrInside}, the map $\sigma_{d_1}\cdots \sigma_{d_k} $ sends $d$ to a circle inside $d_1$, so it doesn't preserve $\hat{B}$ and thus is not in $\Gamma_2$.
\end{proof}

As a a direct result of Proposition \ref{Gamma}, Proposition \ref{normal}, and Proposition \ref{intersection}, we have the following theorem:
\begin{theorem} \label{semidirectproduct}
$\Gamma\cong\Gamma_1\rtimes\Gamma_2$.
\end{theorem}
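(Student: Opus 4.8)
The plan is to invoke the standard recognition criterion for an internal semidirect product: if a group $G$ has subgroups $N$ and $H$ with $N$ normal in $G$, $N \cap H = \{1\}$, and $G = NH$, then $G \cong N \rtimes H$, where $H$ acts on $N$ by conjugation. Here I would take $G = \Gamma$, $N = \Gamma_1$, and $H = \Gamma_2$; the three hypotheses are furnished directly by the preceding results, so the bulk of the work is simply to assemble them.

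First I would record normality: Proposition \ref{normal} gives that $\Gamma_1$ is normal in $\Gamma$, so conjugation defines an action $\phi \colon \Gamma_2 \to \mathrm{Aut}(\Gamma_1)$ by $\phi(g_2)(g_1) = g_2 g_1 g_2^{-1}$, and Lemma \ref{image_of_dual} makes this action explicit, sending $\sigma_d \mapsto \sigma_{g_2(d)}$ for $d \in \hat{B}$. Second, Proposition \ref{intersection} gives $\Gamma_1 \cap \Gamma_2 = \{1\}$. Third, the product decomposition $\Gamma = \Gamma_1 \Gamma_2$ is exactly the content of Corollary \ref{g1g2}: every $g \in \Gamma$ can be written $g = g_1 g_2$ with $g_1 \in \Gamma_1$ and $g_2 \in \Gamma_2$. (This also follows from Proposition \ref{Gamma} together with normality, since $\Gamma_1 \Gamma_2$ is then a subgroup containing both generating sets and hence equals $\langle \Gamma_1, \Gamma_2 \rangle = \Gamma$.)

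With these three facts in hand, I would define the multiplication map $\mu \colon \Gamma_1 \rtimes_\phi \Gamma_2 \to \Gamma$ by $\mu(g_1, g_2) = g_1 g_2$ and verify that it is an isomorphism. That $\mu$ is a homomorphism is a direct computation from the definition of the external semidirect product and the fact that $\phi$ is conjugation. Surjectivity is precisely $\Gamma = \Gamma_1 \Gamma_2$. Injectivity follows from the trivial intersection: if $g_1 g_2 = g_1' g_2'$, then $(g_1')^{-1} g_1 = g_2' g_2^{-1} \in \Gamma_1 \cap \Gamma_2 = \{1\}$, forcing $g_1 = g_1'$ and $g_2 = g_2'$, so the decomposition is unique.

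Since every step reduces to an already-established proposition, I do not expect a substantial obstacle; the only point requiring care is confirming that the abstract action in the external semidirect product agrees with the conjugation action on $\Gamma_1$, which is immediate from Proposition \ref{normal} and the explicit formula of Lemma \ref{image_of_dual}.
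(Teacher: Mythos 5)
Your proposal is correct and matches the paper's own argument, which likewise obtains the theorem as a direct consequence of Proposition \ref{Gamma}, Proposition \ref{normal}, and Proposition \ref{intersection} (with Corollary \ref{g1g2} supplying the product decomposition). You have simply written out explicitly the standard internal-semidirect-product verification that the paper leaves implicit.
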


We conclude this section by sketching some results on the structure of the supergroup. Let $\hat{\Gamma}_1$ denote the group generated by reflections across the base circles $b\in B$. Recall that the superpacking $\mathscr{S}$ is defined as the orbit of $B$ under the group $\langle \Gamma_1, \hat{\Gamma}_1\rangle$. The dual superpacking $\hat{\mathscr{S}}$ is the orbit of $\hat{B}$ under this group. Define $\Gamma_{\mathscr{S}} = \textsc{Sym}(\mathscr{S}, \hat{\mathscr{S}})$, the group of M\"{o}bius transformations which preserve both the superpacking and the dual superpacking. One has the following:

\begin{theorem}
The group $\langle \Gamma_1, \hat{\Gamma}_1\rangle$ is a Coxeter group with generators $\sigma_c$ for $c \in B$, $\sigma_d$ for $d \in \hat{B}$, and relations $\sigma_c^2=1$ for all $c \in B$, $\sigma_d^2=1$ for all $d \in \hat{B}$, and $\sigma_c \sigma_d = \sigma_d \sigma_c$ for all pairs $c \in B$, $d \in \hat{B}$ intersecting orthogonally.
\end{theorem}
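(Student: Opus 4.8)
The plan is to identify $\langle \Gamma_1, \hat{\Gamma}_1\rangle$ with the abstract right-angled Coxeter group on the stated generators and relations by realizing it as a hyperbolic reflection group with an explicit fundamental Coxeter polyhedron. First I would check that all the listed relations genuinely hold: each $\sigma_c$ and $\sigma_d$ is an involution, and if $c \in B$, $d \in \hat{B}$ meet orthogonally then $\sigma_c \sigma_d$ is the half-turn fixing their two points of intersection, so $(\sigma_c\sigma_d)^2 = 1$, i.e. $\sigma_c\sigma_d = \sigma_d\sigma_c$. This yields a surjection $\phi \colon W \twoheadrightarrow \langle \Gamma_1, \hat{\Gamma}_1\rangle$ from the abstract Coxeter group $W$ with Coxeter matrix $m_{gg'} = 2$ when $g \in B$, $g' \in \hat{B}$ are orthogonal and $m_{gg'} = \infty$ for every other pair. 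There are no finite relations among base circles, nor among dual circles, since by Definition \ref{basedual}(1) any two are disjoint or tangent, giving a loxodromic or parabolic product of infinite order; likewise a base--dual pair that does not meet orthogonally has disjoint interiors by (3), giving again an infinite-order product. Everything then reduces to the injectivity of $\phi$, i.e. to showing these are the only relations.

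For injectivity I would pass to the half-space model of $\mathbb{H}^3$, as in Theorem \ref{Kleiniancondition}, and lift each circle $g \in B \cup \hat{B}$ to the geodesic hemisphere $H_g$ it bounds, so that $\sigma_g$ becomes the hyperbolic reflection in $H_g$. Let $P$ be the intersection of the open half-spaces lying outside every $H_g$; as an intersection of half-spaces it is convex. The key geometric input is that each circle contributes a genuine wall: for orthogonal circles the distance $\delta$ between centres satisfies $\delta^2 = r_c^2 + r_d^2 > r_d^2$, so the centre of any circle lies strictly outside every circle orthogonal to it, and it lies strictly outside all remaining circles by the disjoint-interiors conditions of Definition \ref{basedual}. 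Hence the vertical ray over the centre of $g$ meets $H_g$ and no other hemisphere, placing a piece of $H_g$ on $\partial P$. Next I would read off the dihedral angles of $P$: two walls $H_c, H_d$ meet in $\mathbb{H}^3$ only when the circles meet, the base--dual orthogonal crossings give dihedral angle $\pi/2$, while tangencies between two base (or two dual) circles, and between consecutive ringing duals, give ideal edges. Using the ringing structure of Proposition \ref{ringed} --- each base circle is surrounded by a cyclically ordered ring of orthogonal duals with consecutive members tangent --- one sees that the walls incident to $H_c$ are exactly these orthogonal duals and that every orthogonal pair genuinely shares a ridge of $P$, with no wall occluded. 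Thus $P$ is a Coxeter polyhedron all of whose finite dihedral angles equal $\pi/2$.

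With $P$ in hand I would invoke Poincaré's polyhedron theorem (equivalently Vinberg's theory of hyperbolic reflection groups): the reflections in the walls of a Coxeter polyhedron generate a discrete group with $P$ as fundamental domain, and its only relations are $\sigma_g^2 = 1$ together with $(\sigma_c\sigma_d)^{m_{cd}} = 1$ for walls meeting at angle $\pi/m_{cd}$, with no relation arising from disjoint or ideally tangent walls. Since the finite dihedral angles of $P$ are precisely the $\pi/2$ angles at orthogonal base--dual ridges, this is exactly the presentation in the statement, so $\phi$ is an isomorphism.

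The main obstacle is verifying the hypotheses of Poincaré's theorem in the present infinite, cusped setting. One must check local finiteness of the hemisphere arrangement --- which holds away from the single accumulation point of $B \cup \hat{B}$ guaranteed by Definition \ref{basedual}(4), the accumulation point producing one ideal vertex (a cusp) of $P$ --- together with the completeness and edge-cycle conditions at edges and ideal vertices. The most delicate point is the bookkeeping that every orthogonal pair contributes a ridge (so that no commuting relation is missing) while no two disjoint walls are forced to meet (so that no spurious relation appears); this is exactly where Proposition \ref{ringed} and the tangency analysis of Proposition \ref{tangencypoint} are needed. As an alternative that avoids Poincaré's theorem and stays closer to the elementary arguments of Section 2, one could instead prove injectivity directly by a ping-pong argument on $\hat{\C}$: given a word that is reduced in the right-angled sense, track the innermost circle containing the image of a suitable test point, generalizing the proof of Theorem \ref{gamma1}. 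Here the commuting relations are the complication, since one must arrange the reflections so that commuting letters, acting on disjoint regions, do not interfere before concluding that the reduced word moves the test point.
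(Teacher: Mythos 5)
The paper actually omits the proof of this theorem, stating only that it is ``similar to previous ones in this section''---that is, the intended argument is the elementary one you relegate to your final alternative: a ping-pong/nesting argument on $\hat{\C}$ generalizing Theorem \ref{gamma1} and Proposition \ref{intersection}, in which one takes a word reduced modulo the right-angled relations and tracks a test point (or test circle) through strictly nested interiors, using the commutation relations to arrange that successive non-commuting letters genuinely push the point deeper. Your main route via hyperbolic reflection groups is therefore a genuinely different proof, and a legitimate one: realizing $P$ as a Coxeter polyhedron with all finite dihedral angles equal to $\pi/2$ and invoking Poincar\'e--Vinberg buys more than the paper's approach (discreteness, a fundamental domain, and geometric finiteness statements that feed directly into Theorem \ref{Kleiniancondition}), at the cost of the heavier machinery and the verifications you correctly flag. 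Your identification of the delicate points is accurate: wall non-occlusion via the apex-over-center computation, ridge existence for each orthogonal pair (which can be checked by the same normalization trick as Proposition \ref{tangencypoint}---put $c$ on the real axis and $d$ as the unit circle; properties (1) and (3) of Definition \ref{basedual} then force any potentially occluding circle to project onto $\lbrace |x|\ge 1\rbrace$, so the geodesic over $(-1,1)$ survives), and local finiteness, which holds on all of $\mathbb{H}^3$ because the circles accumulate at a single boundary point, so the infinite-sided version of Poincar\'e's theorem applies.

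Two small corrections. First, your definition of $P$ as the intersection of half-spaces \emph{outside} every hemisphere is only right when every circle's interior is the bounded disk; in the finite case some circle (e.g.\ the outer Apollonian base circle) has unbounded interior, and $P$ must be cut out by the half-space on the side of each circle's \emph{exterior}, i.e.\ inside that hemisphere. In the infinite case, normalizing the accumulation point to $\infty$ makes your description literally correct. Second, your check that the Coxeter matrix has no unexpected finite entries (loxodromic or parabolic products for disjoint or tangent pairs) is welcome but redundant once Poincar\'e's theorem applies, since the presentation it produces reads off relations only from the ridges of $P$; what is essential is exactly the non-occlusion bookkeeping you highlighted, so your emphasis is well placed.
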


\begin{theorem}
$\Gamma_{\mathscr{S}} \cong \langle \Gamma_1, \hat{\Gamma}_1\rangle \rtimes \Gamma_2$. 
\end{theorem}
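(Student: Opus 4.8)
The plan is to reproduce, in the super-setting, the three-part argument that yielded Theorem \ref{semidirectproduct}, writing $W = \langle \Gamma_1, \hat{\Gamma}_1 \rangle$ for the Coxeter group whose presentation was just determined. I would verify the three conditions for an internal semidirect product $\Gamma_{\mathscr{S}} \cong W \rtimes \Gamma_2$: that both $W$ and $\Gamma_2$ lie in $\Gamma_{\mathscr{S}}$ and generate it, that $W$ is normal, and that $W \cap \Gamma_2 = \{1\}$. That $W \leq \Gamma_{\mathscr{S}}$ is immediate, since $\mathscr{S}$ and $\hat{\mathscr{S}}$ are by definition $W$-orbits. That $\Gamma_2 \leq \Gamma_{\mathscr{S}}$ follows because any $g \in \Gamma_2$ permutes $B \cup \hat{B}$, hence permutes the generators $\sigma_e$ of $W$ via Lemma \ref{image_of_dual}; thus it normalizes $W$ and therefore fixes both orbits $\mathscr{S} = W\cdot B$ and $\hat{\mathscr{S}} = W\cdot \hat{B}$ setwise. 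This same computation shows $\Gamma_2$ normalizes $W$, which combined with the generation step gives normality of $W$ in $\Gamma_{\mathscr{S}}$, exactly as in Proposition \ref{normal}.

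For the trivial-intersection condition I would argue as in Proposition \ref{intersection}: a nontrivial element of $W$, written as a reduced word in the generators $\sigma_e$ ($e \in B \cup \hat{B}$), carries some circle of $B \cup \hat{B}$ into the interior of the outermost reflecting circle and so cannot stabilize $B \cup \hat{B}$, whereas every element of $\Gamma_2$ does. Making ``outermost'' precise is where the right-angled relations $\sigma_c\sigma_d = \sigma_d\sigma_c$ for orthogonal pairs enter: unlike the dual generators of $\Gamma_1$, consecutive generators here may be orthogonal and hence commute, so I would first use the normal form supplied by the Coxeter structure established above to select an expression with a genuine non-orthogonal outermost reflection before running the nesting argument of Proposition \ref{dualOrInside}.

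The crux is the generation statement $\Gamma_{\mathscr{S}} \subseteq \langle W, \Gamma_2 \rangle$, the analogue of Proposition \ref{Gamma}. I would first prove analogues of Lemma \ref{relativePosTrans} and its Case~1 conclusion: for $g \in \Gamma_{\mathscr{S}}$ and $e \in B \cup \hat{B}$, the image $g(B \cup \hat{B})$ is, up to circles orthogonal to $e$, entirely inside or entirely outside $e$; and if $g(B \cup \hat{B})$ lies outside or orthogonal to every generator, then $g(B \cup \hat{B}) = B \cup \hat{B}$, using that each circle of $B \cup \hat{B}$ is ringed (Proposition \ref{ringed}) and so cannot lie strictly inside another. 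Then I would run the height-reduction: define the height of a circle in $\mathscr{S} \cup \hat{\mathscr{S}}$ as the minimal length of a word in the $\sigma_e$ carrying it from $B \cup \hat{B}$, prove the analogue of Lemma \ref{heightdecrease}, choose a finite set $S \subseteq g(B \cup \hat{B})$ with no common orthogonal circle, and reflect through generators of $W$ until every circle of $S$ reaches height zero. At that stage $w\, g(B \cup \hat{B})$ for the accumulated $w \in W$ lies outside or orthogonal to every generator, so by the Case~1 argument it equals $B \cup \hat{B}$; identifying $B$ (resp. $\hat{B}$) as the height-zero circles of $\mathscr{S}$ (resp. $\hat{\mathscr{S}}$) then shows $wg \in \Gamma_2$, whence $g \in \langle W, \Gamma_2\rangle$.

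I expect the height-reduction to be the main obstacle, for two reasons. First, the height-decrease lemma must accommodate orthogonal intersections: a positive-height circle may be orthogonal rather than interior to a generator, in which case the reflection fixes it, so the clean ``inside $\Rightarrow$ height drops, outside $\Rightarrow$ height rises'' dichotomy of Lemma \ref{heightdecrease} must be restated with orthogonal circles treated as simultaneously inside and outside, and with the commuting relations governing when the minimal expression actually shortens. Second, the termination of the reduction, which for $\Gamma$ rested on the disjoint interiors of the dual circles so that the nesting was strict, now unfolds inside the arrangement of $B \cup \hat{B}$, whose cells meet at right angles; here I would lean on the explicit Coxeter and reflection-group structure proved in the preceding theorem to guarantee that a reduced word has a well-defined outermost wall and that the height is strictly monotone under the reduction, so that the process halts.
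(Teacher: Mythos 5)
Your proposal is correct and takes essentially the same route as the paper: the paper explicitly omits this proof as ``similar to previous ones in this section,'' i.e.\ the three-part semidirect-product argument of Propositions \ref{Gamma}, \ref{normal}, and \ref{intersection} leading to Theorem \ref{semidirectproduct}, which is exactly what you adapt. Your added care about orthogonal (commuting) generators---using the Coxeter normal form to pick a genuine outermost reflecting circle in the trivial-intersection step and restating the height-decrease dichotomy with orthogonal circles treated as fixed---is precisely the detail the paper's omitted adaptation would have to supply.
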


We omit the proofs because they are similar to previous ones in this section.

\section{Examples}

This section introduces examples of our construction. We call our three main examples the triangular, square, and hexagonal packings. We will focus on the arithmetic properties of these examples: quadratic forms and linear relations satisfied by the curvatures, integrality and superintegrality. A ``typical'' packing satisfying our definition will have little arithmetic interest, but the highly symmetric nature of these examples adds more structure.

Some of these examples have appeared in the literature in other contexts, but their properties have not been explored in detail. The square packing is discussed in \cite[Figure 10.17]{MumfordSeriesWright} as the limit of the $1/n$ cusp groups in Maskit's slice. As discussed in the introduction, the triangular and hexagonal packings appear in Kontorovich and Nakamura's classification of superintegral crystallographic packings. And our examples are closely related to the limit packings in \cite{Ahmed2}. The limit of pyramid packings is the original Apollonian packing, the limit of prism packings is our square packing, and the limit of antiprism packings is our triangular packing. Other families have more complicated limits, which will be briefly discussed at the end of this section. 

Every packing has linear and quadratic forms satisfied by the curvatures, like the Descartes quadratic form for the Apollonian packing. The following definition characterizes packings with number-theoretic structure:
\begin{definition}
A packing $\mathscr{P}$ is integral if every circle in $\mathscr{P}$ has integral curvature. The packing is superintegral if every circle in the superpacking has integral curvature.
\end{definition}
\noindent We will also say that an equivalence class of packings under M\"{o}bius transformations is (super)integral if one packing in the class has this property. 

The main tool to find the linear and quadratic forms, and check (super)integrality, is an inversive coordinate system for oriented generalized circles in $\hat{\C}$. A circle is represented as $(\tilde{b}, b, h_1, h_2)^T \in \R^4$, where $b$ is the signed curvature, $\tilde{b}$ is the curvature after inversion through the unit circle, and $(h_1, h_2)$ are the coordinates of the center, multiplied by the curvature. Every circle satisfies the quadratic equation $h_1^2+h_2^2-b \tilde{b}=1$. The action of M\"{o}bius transformations on generalized circles becomes a linear action preserving the quadratic form in this coordinate system. This setup is well explained in \cite{LagariasMallowsWilks} and in \cite{Kocik2}. The article \cite{Ahmed1} gives a full set of linear and quadratic forms for all polyhedral packings. 

In each of the following examples, we begin with base and dual circle configurations. We check integrality and superintegrality using the inversive coordinate system. We give quadratic and linear relations sufficient to determine the curvatures of all the circles in the packing from a finite set of base circles (in fact, just three). The proofs of these relations are omitted because they are similar to the polyhedral case. We also relate these packings to others with commensurate symmetry groups. 

\subsection{Square Packing}

The base and dual configurations for this packing are shown in Figure \ref{fig:square} (A). The dual configuration is a translation of the base configuration. In these configurations, all the circles can be represented with coordinates $(\tilde{b}, b, h_1, h_2)^T \in \Z^4$, so the packing is superintegral. 

Quadratic and linear relations satisfied by curvatures in the square packing are shown in Figure \ref{fig:square2}. In the formulas, $b_i$ represents the curvature of circle $i$. Any image of one of these configurations under $\Gamma$ will satisfy the same relation. The symmetry group of the square superpacking is commensurate to the symmetry group of the Apollonian superpacking, as illustrated in Figure \ref{fig:square3}. The full packing is shown in Figure \ref{fig:square} (B). 

\begin{figure}[H]

\begin{subfigure}{\textwidth}
\begin{center}
\begin{tikzpicture}[scale = 0.5]
\draw(-2, 0) circle (1cm);
\node at (-2, 0) {1};
\draw(0, 0) circle (1cm);
\node at (0, 0) {2};
\draw(0, -2) circle (1cm);
\node at (0, -2) {3};
\draw(2, -2) circle (1cm);
\node at (2, -2) {4};
\node at (0, -4) {$(b_1-3b_2)^2 + (b_4-3b_3)^2 = 2(b_1+b_2)(b_3+b_4)$};
\end{tikzpicture}
\end{center}
\end{subfigure}

\vspace{.2in}

\begin{subfigure}{.3\textwidth}
\begin{tikzpicture}
\draw(-1, 0) circle (0.5cm);
\node at (-1, 0){1};
\draw(0, 0) circle (0.5cm);
\draw(1, 0) circle (0.5cm);
\node at (1, 0){3};
\draw(0, 1) circle (0.5cm);
\node at (0, 1){2};
\draw(0, -1) circle (0.5cm);
\node at (0, -1){4};
\node at (0,-2) {$b_1 + b_3 = b_2 + b_4$};
\end{tikzpicture} 
\end{subfigure}
\begin{subfigure}{.3\textwidth}
\begin{tikzpicture}
\draw(-0.5, 0.5) circle (0.5cm);
\node at (-0.5, 0.5) {5};
\draw(0.5, 0.5) circle (0.5cm);
\node at (0.5, 0.5) {6};
\draw(0.5, -0.5) circle (0.5cm);
\node at (0.5, -0.5){7};
\draw(-0.5, -0.5) circle (0.5cm);
\node at (-0.5, -0.5) {8};
\node[below] at (0,-1.5) {$b_5+b_7 = b_6 + b_8$};
\end{tikzpicture}
\end{subfigure}
\caption{Quadratic form and linear relations}
\label{fig:square2}
\end{figure}
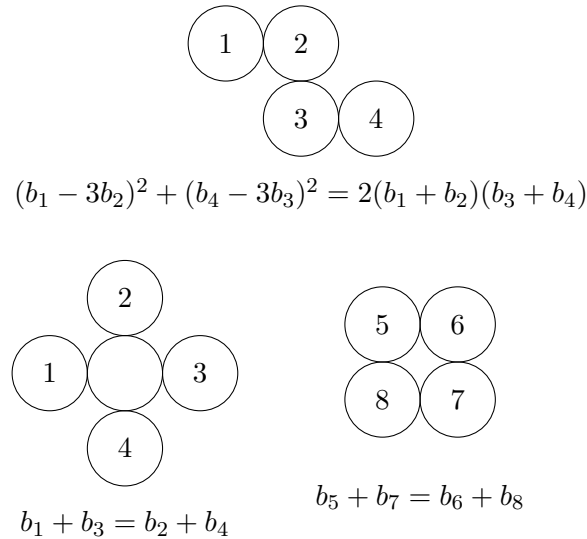

\begin{figure}[H]
\centering
\begin{subfigure}{.44\textwidth}
\begin{tikzpicture}[scale=1.3]
\draw (1,0) circle (1cm);

\draw[thick, blue] (0,1) circle (1cm);

\draw (-.5, 0) -- (1.5, 0);

\draw (0, -.5) -- (0, 1.5);

\draw (-.2, 1.2) -- (1.2, -.2);

\end{tikzpicture}
\caption{The orbit of the blue circle under the group generated by reflections across all the circles is the square superpacking.}
\end{subfigure}
\hspace{.1\textwidth}
\begin{subfigure}{.44\textwidth}
\begin{tikzpicture}[scale=1.3]
\draw (1,0) circle (1cm);

\draw[thick, blue] (0,1) circle (1cm);

\draw[thick, blue] (-.5, 0) -- (1.5, 0);

\draw (0, -.5) -- (0, 1.5);

\draw (-.5, 1) -- (1.5, 1);

\draw (1, -.5) -- (1, 1.5);

\end{tikzpicture}
\caption{The orbit of the blue circles under the group generated by reflections across all the circles is the Apollonian superpacking.}
\end{subfigure}
\caption{Commensurability of superpacking groups}
\label{fig:square3}
\end{figure}
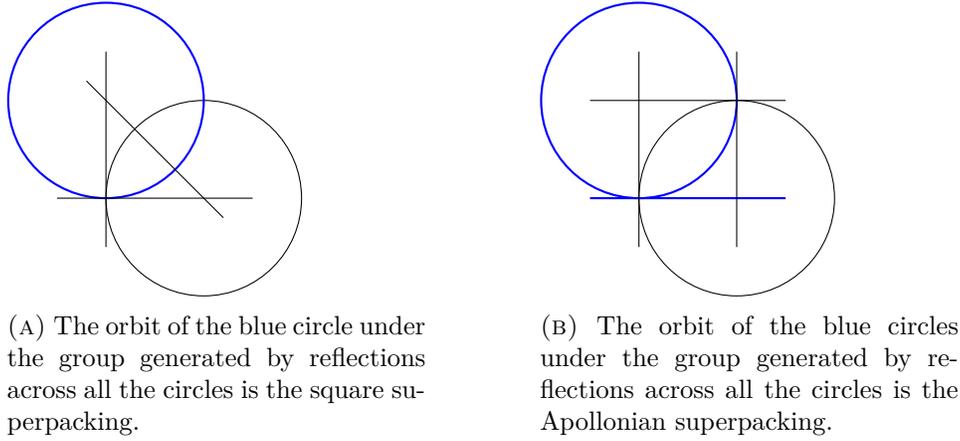

\subsection{Triangular and Hexagonal Packings}

The base configuration and dual configurations for the triangular packing are shown in Figure \ref{fig:triangular1}. In the hexagonal packing, the roles of base and dual configurations are reversed. Circles in the base and dual configuration for the triangular packing can be represented with coordinates in the sets 
\begin{align*}
&\left\lbrace (\tilde{b}, b, h_1, h_2)^T \in \R^4 \, \left| \, b, \tilde{b} \in \Z, \, h_1+h_2i \in 2 \Z\left[ \tfrac{1+i\sqrt{3}}{2} \right] \right. \right\rbrace, \\
&\left\lbrace (\tilde{b}, b, h_1, h_2)^T \in \R^4 \, \left| \, b, \tilde{b} \in \sqrt{3}\Z, \, h_1+h_2i \in 2i \Z\left[ \tfrac{1+i\sqrt{3}}{2} \right] \setminus 2\sqrt{3}\Z\left[ \tfrac{1+i\sqrt{3}}{2} \right] \right. \right\rbrace
\end{align*}
respectively. Reflection across the base and dual circles preserve these sets, so the triangular packing is superintegral, and rescaling by $\sqrt{3}$, we see that the hexagonal packing is superintegral as well. 

\begin{figure}[H]
\begin{center}
\fbox{
\begin{tikzpicture}[scale=1.4]
\clip (-2.5, {-sqrt(3)/2-.5}) -- (1.5, {-sqrt(3)/2-.5})-- (1.5, {sqrt(3)/2+.5}) -- (-2.5, {sqrt(3)/2+.5});

\draw[thick, blue](-2, 0) circle (0.5cm);
\draw[thick, blue](-1, 0) circle (0.5cm);
\draw[thick, blue](0, 0) circle (0.5cm);
\draw[thick, blue](1, 0) circle (0.5cm);
\draw[thick, blue](-5/2, {sqrt(3)/2}) circle (0.5cm);
\draw[thick, blue](-3/2, {sqrt(3)/2}) circle (0.5cm);
\draw[thick, blue](-1/2, {sqrt(3)/2}) circle (0.5cm);
\draw[thick, blue](1/2, {sqrt(3)/2}) circle (0.5cm);
\draw[thick, blue](3/2, {sqrt(3)/2}) circle (0.5cm);
\draw[thick, blue](-5/2, {-sqrt(3)/2}) circle (0.5cm);
\draw[thick, blue](-3/2, {-sqrt(3)/2}) circle (0.5cm);
\draw[thick, blue](-1/2, {-sqrt(3)/2}) circle (0.5cm);
\draw[thick, blue](1/2, {-sqrt(3)/2}) circle (0.5cm);
\draw[thick, blue](3/2, {-sqrt(3)/2}) circle (0.5cm);

\draw[thick, red](-2, {sqrt(3)/3}) circle ({sqrt(3)/6});
\draw[thick, red](-1, {sqrt(3)/3}) circle ({sqrt(3)/6});
\draw[thick, red](0, {sqrt(3)/3}) circle ({sqrt(3)/6});
\draw[thick, red](1, {sqrt(3)/3}) circle ({sqrt(3)/6});

\draw[thick, red](-2.5, {sqrt(3)/6}) circle ({sqrt(3)/6});
\draw[thick, red](-1.5, {sqrt(3)/6}) circle ({sqrt(3)/6});
\draw[thick, red](-.5, {sqrt(3)/6}) circle ({sqrt(3)/6});
\draw[thick, red](.5, {sqrt(3)/6}) circle ({sqrt(3)/6});
\draw[thick, red](1.5, {sqrt(3)/6}) circle ({sqrt(3)/6});

\draw[thick, red](-2.5, {-sqrt(3)/6}) circle ({sqrt(3)/6});
\draw[thick, red](-1.5, {-sqrt(3)/6}) circle ({sqrt(3)/6});
\draw[thick, red](-.5, {-sqrt(3)/6}) circle ({sqrt(3)/6});
\draw[thick, red](.5, {-sqrt(3)/6}) circle ({sqrt(3)/6});
\draw[thick, red](1.5, {-sqrt(3)/6}) circle ({sqrt(3)/6});

\draw[thick, red](-2, {-sqrt(3)/3}) circle ({sqrt(3)/6});
\draw[thick, red](-1, {-sqrt(3)/3}) circle ({sqrt(3)/6});
\draw[thick, red](0, {-sqrt(3)/3}) circle ({sqrt(3)/6});
\draw[thick, red](1, {-sqrt(3)/3}) circle ({sqrt(3)/6});

\draw[thick, red](-2, {2*sqrt(3)/3}) circle ({sqrt(3)/6});
\draw[thick, red](-1, {2*sqrt(3)/3}) circle ({sqrt(3)/6});
\draw[thick, red](0, {2*sqrt(3)/3}) circle ({sqrt(3)/6});
\draw[thick, red](1, {2*sqrt(3)/3}) circle ({sqrt(3)/6});

\draw[thick, red](-2, {-2*sqrt(3)/3}) circle ({sqrt(3)/6});
\draw[thick, red](-1, {-2*sqrt(3)/3}) circle ({sqrt(3)/6});
\draw[thick, red](0, {-2*sqrt(3)/3}) circle ({sqrt(3)/6});
\draw[thick, red](1, {-2*sqrt(3)/3}) circle ({sqrt(3)/6});

\draw[thick, red](-2.5, {5*sqrt(3)/6}) circle ({sqrt(3)/6});
\draw[thick, red](-1.5, {5*sqrt(3)/6}) circle ({sqrt(3)/6});
\draw[thick, red](-.5, {5*sqrt(3)/6}) circle ({sqrt(3)/6});
\draw[thick, red](.5, {5*sqrt(3)/6}) circle ({sqrt(3)/6});
\draw[thick, red](1.5, {5*sqrt(3)/6}) circle ({sqrt(3)/6});

\draw[thick, red](-2.5, {-5*sqrt(3)/6}) circle ({sqrt(3)/6});
\draw[thick, red](-1.5, {-5*sqrt(3)/6}) circle ({sqrt(3)/6});
\draw[thick, red](-.5, {-5*sqrt(3)/6}) circle ({sqrt(3)/6});
\draw[thick, red](.5, {-5*sqrt(3)/6}) circle ({sqrt(3)/6});
\draw[thick, red](1.5, {-5*sqrt(3)/6}) circle ({sqrt(3)/6});
\end{tikzpicture}}
\end{center}
\caption{Base and dual configurations for the triangular and hexagonal packings}
\label{fig:triangular1}
\end{figure}
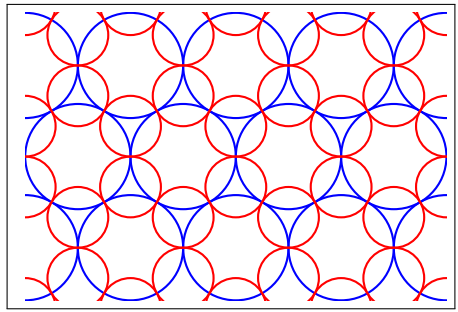

Quadratic and linear relations satisfied by curvatures in the triangular and hexagonal packings are shown in Figure \ref{fig:triangular2}. The symmetry group of the triangular and hexagonal superpackings is commensurate to the symmetry group of the limit of trapezohedral superpackings from \cite{Ahmed2}, as illustrated in Figure \ref{fig:triangular3}. We conjecture that this group is not commensurate to the symmetry group of any polyhedral superpacking. The full triangular packing is shown in Figure \ref{fig:triangular4} and the full hexagonal packing is shown in Figure \ref{fig:triangular5}. Note: the circles in  Figures \ref{fig:square} (B), \ref{fig:triangular4}, and \ref{fig:triangular5} were obtained by reflecting a subset of base circles across four generations of dual circles.

\begin{figure}[H]
\begin{center}
\begin{subfigure}{.6\textwidth}
\begin{center}
\begin{tikzpicture}[scale = 0.5]
\draw[label = "C1"] (0,0) circle (1cm);
\node at (0, 0) {1};
\draw[label = "C2"] (2,0) circle (1cm);
\node at (2, 0) {2};
\draw[label = "C3"] (1, {-sqrt(3)}) circle [radius = 1];
\node at (1, {-sqrt(3)}) {3};
\draw[label = "C4"] (-1, {-sqrt(3)}) circle [radius = 1];
\node at (-1, {-sqrt(3)}) {4};
\node at (0, -4) {$(3b_1-b_2+3b_3-b_4)^2 = 12b_1b_3+4b_2b_4$};
\end{tikzpicture}
\end{center}
\end{subfigure}

\vspace{.2in}

\begin{subfigure}{.59\textwidth}
\begin{center}
\begin{tikzpicture}[scale = 0.5]
\draw (0, 0) circle (1cm);
\node at (-2, 0){1};
\node at (1, {sqrt(3)}){2};
\node at (1, {-sqrt(3)}){3};
\node at (0, 0){4};
\draw (-2, 0) circle (1cm);
\draw (1, {sqrt(3)}) circle (1cm);
\draw (1, {-sqrt(3)}) circle (1cm);
\node at (0, -4) {$2b_1^2+2b_2^2+2b_3^2+10b_4^2=(b_1+b_2+b_3+b_4)^2$};
\end{tikzpicture}
\end{center}
\end{subfigure}
\hspace{.1\textwidth}
\begin{subfigure}{.29\textwidth}    
\begin{tikzpicture}[scale = 0.5]
\path (1, {sqrt(3)}) circle (1cm);
\draw[label = "C1"] (0,0) circle (1cm);
\node at (0, 0) {1};
\draw[label = "C2"] (2,0) circle (1cm);
\node at (2, 0) {2};
\draw(1, {-sqrt(3)}) circle (1cm);
\draw[label = "C3"] (3, {-sqrt(3)}) circle [radius = 1];
\node at (3, {-sqrt(3)}) {3};
\draw[label = "C4"] (-1, {-sqrt(3)}) circle [radius = 1];
\node at (-1, {-sqrt(3)}) {4};
\node at (1, -4) {$2b_1 - 2b_2 = b_4 - b_3$};
\end{tikzpicture}
\end{subfigure}
\end{center}
\caption{Quadratic forms and linear relation}
\label{fig:triangular2}
\end{figure}

\begin{figure}[H]
\centering
\begin{subfigure}{.44\textwidth}
\begin{tikzpicture}[scale=.9]
\path (0, 3) circle ({sqrt(3)});
\draw (-.5,0) -- ({sqrt(3)+.5},0);
\draw ({0-sqrt(3)/4},{1+1/4}) -- ({sqrt(3)+sqrt(3)/4},{0-1/4});
\draw (0,-.5) -- (0,1.5);
\draw[thick, blue] ({sqrt(3)}, 0) circle ({sqrt(3)});
\draw[thick, red] (0, 1) circle (1);
\end{tikzpicture}
\caption{The orbit of the blue circle is the triangular superpacking. The orbit of the red circle is the hexagonal superpacking.}
\end{subfigure}
\hspace{.1\textwidth}
\begin{subfigure}{.44\textwidth}
\begin{tikzpicture}[scale=.9]
\draw[thick, red] (-.5,0) -- ({sqrt(3)+.5},0);
\draw (0,-.5) -- (0,3.5);
\draw ({sqrt(3)},-.5) -- ({sqrt(3)},3.5);
\draw[thick, red] (-.5,3) -- ({sqrt(3)+.5},3);
\draw({sqrt(3)}, 0) circle ({sqrt(3)});
\draw(0,3) circle ({sqrt(3)});
\draw[thick, red] (0, 1) circle (1);
\draw[thick, red] ({sqrt(3)}, 2) circle (1);
\end{tikzpicture}
\caption{The orbit of the red circles is the limit of trapezohedral superpackings.\\}
\end{subfigure}
\caption{Commensurability of superpacking groups}
\label{fig:triangular3}
\end{figure}

\begin{figure}[H]
\begin{center}
\fbox{\includegraphics[width=.67\textwidth]{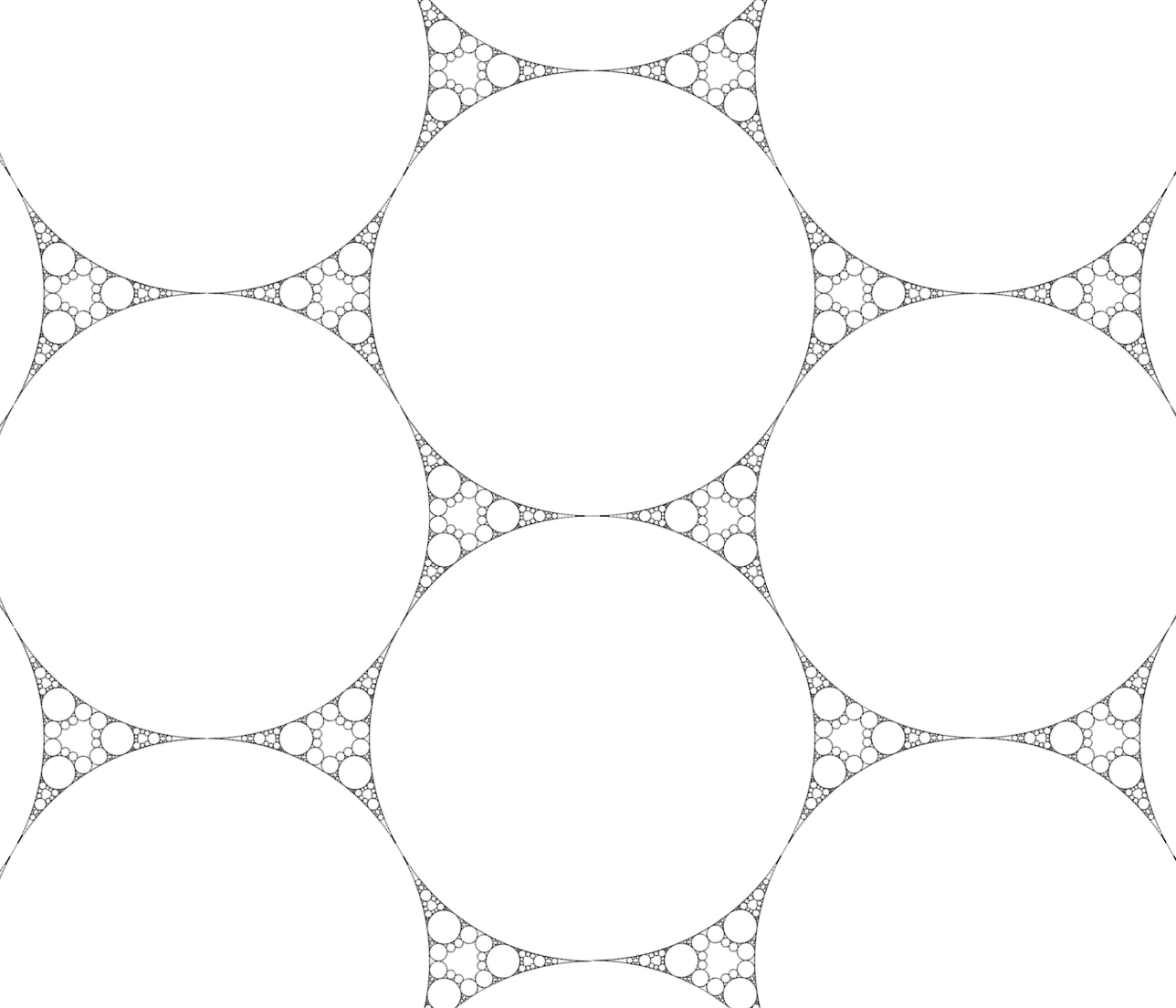}}
\end{center}
\caption{Triangular packing}
\label{fig:triangular4}
\end{figure}

\begin{figure}[H]\captionsetup{justification=centering,margin=2cm}

\begin{center}
\fbox{\includegraphics[width=.67\textwidth]{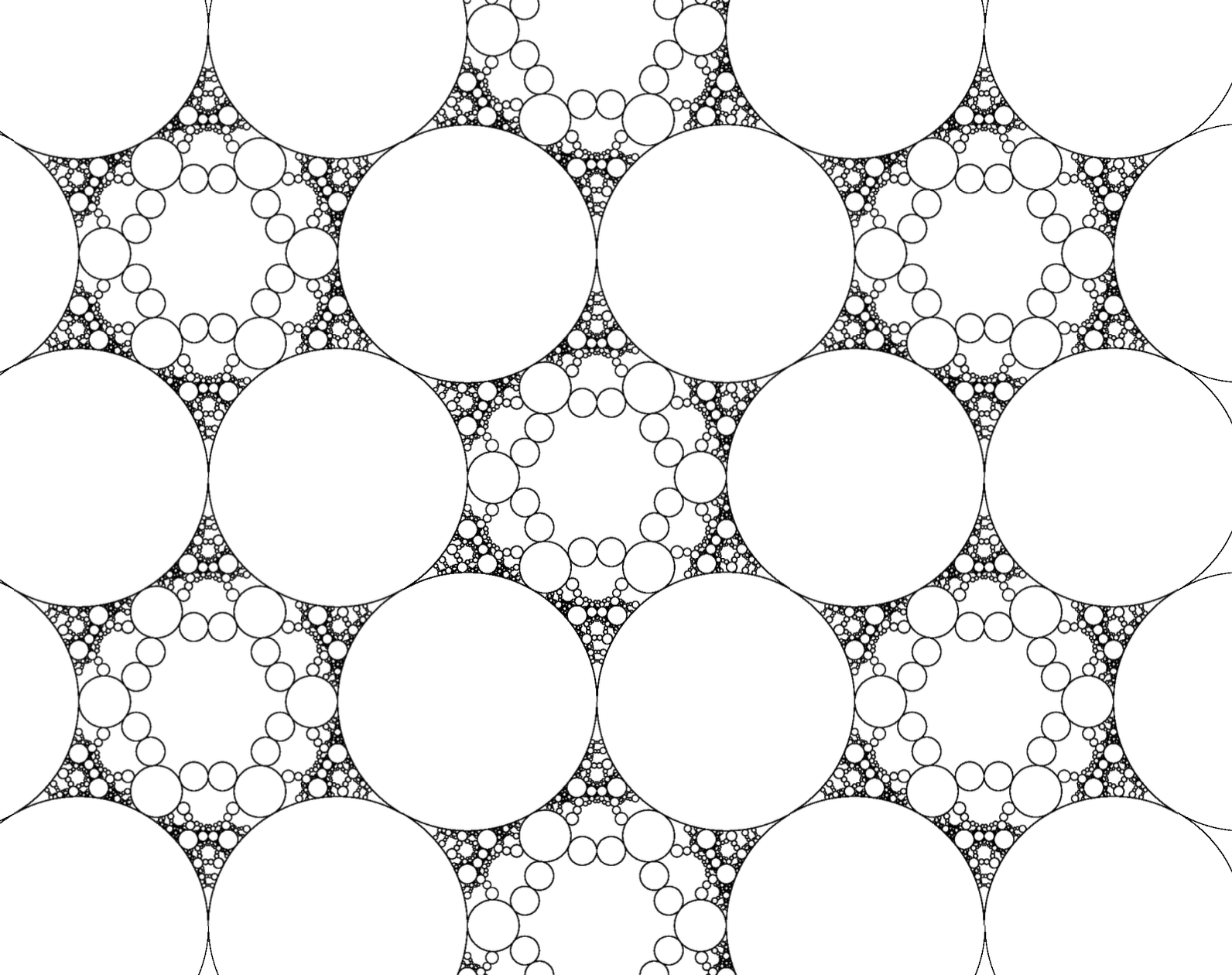}}
\end{center}
\caption{Hexagonal packing}

\label{fig:triangular5}
\end{figure}

We finish this section with some brief remarks on the limits of polyhedral packing families studied in \cite{Ahmed2}. The limit of trapezohedral packings is superintegral. As discussed above, the symmetry group of its superpacking is commensurate to that of the triangular and hexagonal superpackings. The limit of the cupola packings is also superintegral. The symmetry group of its superpacking is commensurate to that of the octahedral superpacking. The limit of the anticupola packings is not integral; this can be proven by an infinite descent argument. 

\section{Wallpaper Groups}

The goal of this section is to illustrate the rich variety of circle packings satisfying Definition \ref{packing}, with a focus on symmetry groups. We give the following converse to Theorem \ref{gamma2}:
\begin{theorem}
All the possible groups listed in Theorem \ref{gamma2} actually arise as the group of symmetries $\Gamma_2=\textsc{Sym}(B,\hat{B})$ for some base and dual configurations $B$, $\hat{B}$ satisfying Definition \ref{basedual}. 
\end{theorem}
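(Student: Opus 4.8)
The plan is to realize each group on the list of Theorem \ref{gamma2} by an explicit pair of configurations, treating the finite (spherical) groups and the translation-containing (planar) groups separately. The finite case is almost immediate: if $F$ is the full symmetry group of a convex polyhedron $P$, take $G$ and $\hat G$ to be the $1$-skeleta of $P$ and its dual, and apply Theorem \ref{KAT1} to obtain dual circle configurations $B,\hat B$ with $G_B\cong G$, $G_{\hat B}\cong \hat G$. By the finite case of Theorem \ref{gamma2}, $\Gamma_2=\textsc{Aut}(G_B)$, which is the symmetry group of $P$. Since every cyclic and dihedral group, as well as the tetrahedral, octahedral, and icosahedral groups, occurs as the full symmetry group of some polyhedron (using a chiral decoration of a prism or antiprism when a purely rotational group is wanted), this settles all finite groups, including the cyclic and dihedral groups that also appear in the infinite case of Theorem \ref{gamma2}.

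It remains to realize the groups containing translations: the seven frieze groups and the seventeen wallpaper groups, with $B$ infinite. Here I would start from one of the maximally symmetric configurations of Section 5 and then \emph{break} its symmetry down to a prescribed subgroup $H$ by a local decoration. For wallpaper groups I begin from the square configuration of Figure \ref{fig:square} or the triangular configuration of Figure \ref{fig:triangular1}, whose symmetry groups are $p4m$ and $p6m$ respectively (as is readily checked); using the standard fact that every wallpaper group embeds as a subgroup of $p4m$ or $p6m$, I may assume $H$ is a subgroup of the symmetry group of my chosen starting configuration. For frieze groups I start from a $1$-periodic strip cut out of one of these, whose symmetry is the maximal frieze group, of which all seven frieze groups are subgroups. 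Given $H$, I fix a fundamental domain $D$ for $H$, choose a small, rigid \emph{motif} -- a finite cluster of circles carrying a definite orientation or chirality -- place it inside $D$, and propagate it over the plane by the action of $H$. The motif is designed so that the only plane isometries fixing the decorated picture are those of $H$: it has trivial stabilizer in the ambient point group, and for the groups generated by glide reflections (such as $pg$, $pgg$, $p4g$) it is arranged to be invariant under the intended glide but not under any pure reflection.

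The substance is realizing the motif by honest circles without violating Definition \ref{basedual}. I would insert the cluster as a finite refinement of the combinatorial pair $(G_B,G_{\hat B})$: subdividing a face of $G_B$ together with the dual operation on $G_{\hat B}$ keeps the two tangency graphs dual and $3$-connected (property (2)), and I realize the refined piece by small circles nested well inside the curvilinear gaps of the undecorated configuration. Keeping these strictly inside the gaps preserves the disjoint-interior property (1); choosing each inserted dual circle orthogonal to exactly its surrounding base circles preserves the orthogonality and face--vertex property (3); and since only finitely many circles are added per tile, with the tiles locally finite, the sole accumulation point remains $\infty$, giving property (4). To compute $\Gamma_2$ exactly, note that the decoration circles are strictly smaller than the original base and dual circles. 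Any element of $\Gamma_2=\textsc{Sym}(B',\hat B')$ preserves radii and preserves $B'$ and $\hat B'$ separately, so it carries the large original base (resp.\ dual) circles to themselves, hence lies in the symmetry group of the coarse configuration ($p4m$, $p6m$, or the maximal frieze group); it must also preserve the decoration, which pins it into $H$. Combined with the fact that $H$ is a symmetry by construction, this gives $\Gamma_2=H$.

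The hard part will be exhibiting a single flexible decoration scheme that is simultaneously duality-preserving -- so that the rigid orthogonal structure of property (3) survives -- and adjustable enough that its stabilizer can be made to equal any prescribed $H$, including the chiral groups and those built from glide reflections. Distinguishing, say, $p4$, $p4g$, and $p4m$, or $pg$ from $pm$, requires motifs whose orientation data encode precisely the presence or absence of mirrors and glides, while the circle realization of these motifs must still inscribe consistently inside every tile without creating unintended coincidences. Finally, the point groups $C_n$ and $D_n$ can be produced in the same framework by grading the motifs radially -- letting the cluster inserted at combinatorial distance $k$ from a fixed center depend on $k$ -- so that all translations are destroyed while the central $n$-fold rotation, and for $D_n$ a reflection axis, is retained; alternatively, as noted above, they are already covered by the polyhedral construction.
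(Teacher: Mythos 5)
Your skeleton matches the paper's own: the finite case is handled via Theorem \ref{KAT1} applied to a polyhedron with the desired symmetry group, and the wallpaper case by symmetry-breaking refinements of the square ($p4m$) and triangular ($p6m$) base configurations --- exactly the strategy of the paper's Theorem \ref{wallpaper}, which even states that $B$ ``can be realized as the refinement of the base configuration of either the triangular or square packing.'' Your radius-preservation argument (elements of $\Gamma_2$ fix the accumulation point, hence are Euclidean isometries, hence map the large coarse circles to themselves and are then pinned into $H$ by the decoration) is a clean formulation of a verification the paper leaves implicit in its figures. However, there is a genuine gap at the constructive heart of your proposal, which you yourself flag as ``the hard part'': you never exhibit the motifs, and the step meant to produce them --- subdivide a face of $G_B$, perform the dual operation on $G_{\hat{B}}$, then ``realize the refined piece by small circles nested well inside the curvilinear gaps'' --- is asserted, not proved, and cannot be taken for granted. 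Circles cannot be placed freely: a new base circle in a gap must be tangent to prescribed neighbors, and each new dual circle must be orthogonal to an entire prescribed ring of base circles, so realizing a given combinatorial refinement subject to properties (1)--(4) of Definition \ref{basedual} is a rigid geometric problem. For infinite graphs, the realizability of a prescribed cell decomposition by circle configurations is precisely the paper's Conjecture \ref{KATconjecture}, which is open; your blanket realization claim quietly assumes it.

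The paper closes this gap by brute explicitness: its proof of Theorem \ref{wallpaper} is ``by illustration,'' exhibiting for each of the seventeen wallpaper groups a concrete refinement whose circles have exact coordinates (e.g.\ inscribed circles of radius $\sqrt{2}-1$ in the curvilinear square gaps with satellite circles of radius $(5-3\sqrt{2})/7$, and radii $2\sqrt{3}/3-1$ and $1/(9+4\sqrt{3})$ in the triangular gaps), arranged so that the decorated configuration has exactly the prescribed symmetry, and noting explicitly that each refinement is legitimate ``because the associated dual configurations still exist.'' Your plan could be completed the same way, since the local insertions actually needed (a circle inscribed in a curvilinear triangle or square, plus small tangent satellites placed asymmetrically to kill mirrors and glides) exist by elementary constructions; but as written, the proposal defers exactly the content that constitutes the paper's proof. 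Minor remarks: your finite-case argument is correct as stated (any polyhedral symmetry group is realized by its own polyhedron through Theorem \ref{KAT1}); and the paper, like you, treats the remaining infinite cases (cyclic, dihedral, frieze) as simpler and omits their proofs, so your strip and radial-grading sketches for those are consistent with, though no more detailed than, what the paper provides.
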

\noindent In the case of finite $B$, $\hat{B}$, this theorem follows directly from the Koebe-Andreev-Thurston theorem. For infinite $B$, $\hat{B}$, the wallpaper group case is Theorem \ref{wallpaper} below. The other cases are simpler, and the proofs are omitted. 

\begin{theorem} \label{wallpaper}
Any wallpaper group is the symmetry group $\Gamma_2$ of the base and dual configurations $B$, $\hat{B}$ of some circle packing. Moreover, such $B$ can be realized as the refinement of the base configuration of either the triangular or square packing.
\end{theorem}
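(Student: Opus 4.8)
The plan is to reduce the theorem to a single group-theoretic fact about wallpaper groups together with a rigidity argument built on Theorem \ref{KAT2}. The starting point is that the two base configurations we already understand realize the two maximal wallpaper groups: the square base configuration (equal circles centered at the square lattice) has symmetry group p4m, and the triangular base configuration has symmetry group p6m. The classification of wallpaper groups shows that every one of the seventeen types is conjugate, inside the affine group of the plane, to a finite-index subgroup of one of these: the twelve groups with no three-fold symmetry have point group inside the order-eight dihedral group and embed in p4m, while the five groups with three-fold symmetry (p3, p31m, p3m1, p6, p6m) embed in p6m. So it suffices, given a target group $W$ realized as a subgroup of the symmetry group $G$ of the square or triangular packing, to produce a $W$-periodic refinement of the corresponding base configuration whose full symmetry group is exactly $W$.

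First I would set up the rigidity dictionary that converts symmetry of circles into combinatorial symmetry of graphs. Starting from the relevant base tiling (the square or triangular tangency graph), I refine it to a locally finite triangulation $T$ of the plane that is periodic under $W$; since any cellular decomposition barycentrically subdivides into a triangulation without changing its symmetry group, and a subdivision of a subdivision is again a refinement of the original base configuration, passing to a triangulation is harmless and keeps us within the hypotheses of Theorem \ref{KAT2}. That theorem gives a circle configuration $B$ with $G_B \cong T$, unique up to similarity, together with a dual $\hat{B}$ determined by $B$; consequently $\Gamma_2 = \textsc{Sym}(B,\hat{B}) = \textsc{Sym}(B)$. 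Uniqueness up to similarity promotes every combinatorial automorphism of $T$ to a conformal or anticonformal self-map of $B$, and by the argument of Theorem \ref{gamma2} (the accumulation point sits at $\infty$, so any similarity preserving $B$ is an isometry) these self-maps are precisely the elements of $\Gamma_2$. Thus $\Gamma_2$ is canonically identified with the combinatorial automorphism group $\textsc{Aut}(T)$, and the problem becomes the purely combinatorial one of building $T$ so that $\textsc{Aut}(T) = W$.

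For the combinatorial construction I would use the standard asymmetric-decoration technique. Choose a fundamental domain $D$ for the action of $W$ on the base tiling (a union of finitely many base tiles), and subdivide the tiles inside $D$ according to a fixed, deliberately asymmetric pattern, for instance by inserting an off-center vertex in one distinguished tile and triangulating the remainder so that no tile of $D$ is combinatorially interchangeable with another by any element of the ambient lattice symmetry. Propagating this decoration by $W$ over the whole plane produces a $W$-periodic triangulation $T$, so $W \le \textsc{Aut}(T)$ by construction. Because $\textsc{Aut}(T)$ is forced, through the dictionary above, to be a discrete group of isometries contained in $G$, any extra automorphism would be an element of $G \setminus W$ carrying the decorated tiles of one $W$-orbit onto another; the asymmetry of the motif is chosen precisely to exclude this, yielding $\textsc{Aut}(T) = W$.

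The main obstacle is this final ``exactly $W$'' step. One must exhibit, for the finitely many conjugacy types of subgroup $W \le$ p4m and $W \le$ p6m, a concrete decoration of the fundamental domain whose stabilizer in $G$ is trivial yet which is itself $W$-invariant, and simultaneously verify that the resulting $T$ is a simple, $3$-connected triangulation with its unique accumulation point at $\infty$, so that Definition \ref{basedual} and the hypotheses of Theorem \ref{KAT2} are met. The connectivity and disjoint-interior checks are routine for triangulations of the plane, so the real work lies in designing the motifs and proving the absence of accidental symmetry. I would present either explicit decorations for each case or a single parametrized family together with a uniform argument that a generic choice kills every element of $G \setminus W$, which is the cleanest way to dispatch all seventeen groups at once; appealing instead to Conjecture \ref{KATconjecture} would remove the need to triangulate but at the cost of an unproven input.
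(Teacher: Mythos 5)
Your overall route---embed each wallpaper group $W$ up to affine conjugacy in $p4m$ or $p6m$, build a $W$-periodic combinatorial refinement $T$ of the square or triangular tiling with an asymmetric motif, and then use Theorem \ref{KAT2} plus its uniqueness clause to identify $\Gamma_2$ with $\textsc{Aut}(T)=W$---is genuinely different from the paper's. The paper proves Theorem \ref{wallpaper} ``by illustration'': it keeps the original unit circles of the square or triangular base configuration fixed and inserts explicitly computed circles (radii $\sqrt{2}-1$, $(5-3\sqrt{2})/7$, $2\sqrt{3}/3-1$, $1/(9+4\sqrt{3})$, \dots) into the interstices, one picture per group, noting that these refinements are legitimate base configurations because the dual configurations still exist, and reading the symmetry group off the marked rotation centers and reflection axes. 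No uniformization theorem is invoked at all. Your rigidity route is attractive because it would dispatch the exactness verification more uniformly, but as written it has two genuine gaps.

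First, the step asserting that $\textsc{Aut}(T)$ is ``forced, through the dictionary, to be a discrete group of isometries contained in $G$'' does not follow. After re-uniformizing via Theorem \ref{KAT2}, the circle realization of $T$ is a \emph{new} configuration; it need not contain, or metrically resemble, the original square or triangular configuration, so its isometry group has no a priori relation to $G=\textsc{Sym}$ of the unrefined configuration, and an unexpected automorphism of $T$ need not lie in $G\setminus W$---it could mix old and new vertices. To repair this you must first force every automorphism of $T$ to preserve the set of original vertices (e.g.\ by making them combinatorially recognizable through degrees or local structure), and only then does the asymmetric-motif argument exclude extra symmetry; this step is missing. Second, the ``moreover'' clause fails under your construction: in the paper, a refinement means the new $B$ \emph{contains} the original base circles, with smaller circles added inside interstices. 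Barycentric subdivision places vertices on edges---i.e.\ circles at the points of tangency---which destroys the original tangencies, so your $T$ is not a refinement in this sense; and even for a faces-only subdivision, Theorem \ref{KAT2} produces the realization only up to similarity, with no guarantee that the old circles persist as circles of the original configuration. Two smaller points: applying Theorem \ref{KAT2} requires your periodic triangulation to be of parabolic type (true for lattice-periodic triangulations, e.g.\ by recurrence, but it needs to be said), and the exhibition of motifs achieving exactly the symmetry $W$ for all seventeen groups---which you defer to ``explicit decorations or a generic-choice argument''---is precisely the content that constitutes the paper's proof, so in its present form the proposal is a plan for a proof rather than a proof.
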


\begin{proof}
We will prove this theorem by illustration. We label the centers of rotations and axes of reflections using notation introduced in Figure \ref{wallpaper_label}. The entire base configurations can be generated by these symmetries or translations, starting from the circles shown.

\begin{figure}[H]
    \centering
    \begin{tikzpicture}
    \draw[red,thick] (-1,0) -- (0.5,0);
    \node[right] at (0.5,0) {Axis of reflection};
    \draw[blue,thick,dashed] (-1,-0.5) -- (0.5,-0.5);
    \node[right] at (0.5,-0.5) {Axis of glide reflection};
    \filldraw[gray] (-0.75,-1) circle (0.15);
    \node[right] at (-0.5,-1) {Center of rotation by $\pi$};
    \filldraw[violet] (-0.75,-1.3) -- (-0.55,-1.65) -- (-0.95,-1.65);
    \node[right] at (-0.5,-1.5) {Center of rotation by $2\pi/3$};
    \filldraw[cyan] (-0.9,-1.85) rectangle (-0.6,-2.15);
    \node[right] at (-0.5,-2) {Center of rotation by $\pi/2$};
    \filldraw[brown] (-0.95,-2.5) -- (-0.85,-2.33) -- (-0.65,-2.33) -- (-0.55, -2.5) -- (-0.65,-2.67) -- (-0.85,-2.67);
    \node[right] at (-0.5,-2.5) {Center of rotation by $\pi/3$};
    \end{tikzpicture}
    \caption{Labels for axes of reflection, centers of rotation}
    \label{wallpaper_label}
\end{figure}
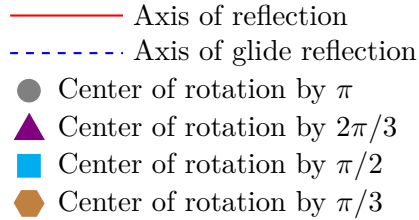

The wallpaper groups $p6m$ and $p4m$ are the most complicated ones, and they are the symmetry groups of the base configurations of triangular and square packings, respectively, as shown in Figure \ref{fig:maxwallpaper}. 

Refining these two base configurations by adding smaller circles removes symmetries. Thus, we can obtain base configurations with different symmetry groups by refining the triangular or square configurations. Notice that the refinements given below are well-defined base configurations because the associated dual configurations still exist. Refinements of the triangular configuration are shown in Figure \ref{fig:triangularrefinement} and refinements of the square configuration are shown in Figure \ref{fig:squarerefinement}. Some wallpaper groups could be obtained by refining either configuration, but we only show one realization of each group.

\begin{figure}[H]
\begin{center}
\begin{tabular}{c c}
\underline{Wallpaper Group $p6m$}
&  
\underline{Wallpaper Group $p4m$}
\vspace{2mm}
\\

\begin{tikzpicture}[scale=0.8]
\draw(0, 0) circle (1);
\draw(2, 0) circle (1);
\draw(1, {sqrt(3)}) circle (1);
\draw(3, {sqrt(3)}) circle (1);

\draw[red,thick] (-0.5,0) -- (2.5,0);
\draw[red,thick] (-0.25,-0.43) -- (1.25, {sqrt(3)+0.43});
\draw[red,thick] (2.25,-0.43) -- (0.75, {sqrt(3)+0.43});
\draw[red,thick] (1.75,-0.43) -- (3.25, {sqrt(3)+0.43});
\draw[red,thick] (0.5, {sqrt(3)}) -- (3.5, {sqrt(3)});
\draw[red,thick] (-0.43,-0.25) -- (3.43,{sqrt(3)+0.25});
\draw[red,thick] (2.43,-0.25) -- (0.07,{sqrt(3)/2+0.25});
\draw[red,thick] (1, {sqrt(3)+0.5}) -- (1,-0.5);
\draw[red,thick] (2, {sqrt(3)+0.5}) -- (2,-0.5);
\draw[red,thick] (0.57, {sqrt(3)+0.25}) -- (2.93,{sqrt(3)/2-0.25});
\draw[blue,thick,dashed] (0,{sqrt(3)/2}) -- (3,{sqrt(3)/2});
\draw[blue,thick,dashed] (0.25,{sqrt(3)/2+0.43}) -- (1.25,-0.43);
\draw[blue,thick,dashed] (0.75,-0.43) -- (2.25,{sqrt(3)+0.43});
\draw[blue,thick,dashed] (1.75,{sqrt(3)+0.43}) -- (2.75,{sqrt(3)/2-0.43});

\filldraw[gray] (1,0) circle (0.1);
\filldraw[gray] (2, {sqrt(3)}) circle (0.1);
\filldraw[gray] (1/2,{sqrt(3)/2}) circle (0.1);
\filldraw[gray] (3/2,{sqrt(3)/2}) circle (0.1);
\filldraw[gray] (5/2,{sqrt(3)/2}) circle (0.1);

\filldraw[brown] (-0.1, 0.17) -- (0.1,0.17) -- (0.2,0) -- (0.1,-0.17) -- (-0.1,-0.17) --(-0.2,0);
\filldraw[brown] (1.9, 0.17) -- (2.1,0.17) -- (2.2,0) -- (2.1,-0.17) -- (1.9,-0.17) --(1.8,0);
\filldraw[brown] (0.9, {sqrt(3)+0.17}) -- (1.1,{sqrt(3)+0.17}) -- (1.2,{sqrt(3)}) -- (1.1,{sqrt(3)-0.17}) -- (0.9,{sqrt(3)-0.17}) --(0.8,{sqrt(3)});
\filldraw[brown] (2.9, {sqrt(3)+0.17}) -- (3.1,{sqrt(3)+0.17}) -- (3.2,{sqrt(3)}) -- (3.1,{sqrt(3)-0.17}) -- (2.9,{sqrt(3)-0.17}) --(2.8,{sqrt(3)});
\filldraw[violet] (0.8,0.46) -- (1.2,0.46) -- (1,0.81);
\filldraw[violet] (1.8,1.03) -- (2.2,1.03) -- (2,1.38);
\end{tikzpicture}
& 
\begin{tikzpicture}[scale = 1.6]

\draw(0, 0) circle (0.5cm);
\draw(1, 0) circle (0.5cm);
\draw(0, 1) circle (0.5cm);
\draw(1, 1) circle (0.5cm);

\draw[red, thick] (-0.25, 0) -- (1.25, 0);
\draw[red, thick] (0, -0.25) -- (0, 1.25);
\draw[red, thick] (-0.25, 1) -- (1.25, 1);
\draw[red, thick] (1, -0.25) -- (1, 1.25);
\draw[red, thick] (-0.18, 1.18) -- (1.18, -0.18);
\draw[red, thick] (-0.18, -0.18) -- (1.18, 1.18); 
\draw[red, thick] (0.5,-0.25) -- (0.5,1.25);
\draw[red, thick] (-0.25,0.5) -- (1.25,0.5);
\draw[blue,thick,dashed] (-0.18,0.32) -- (0.68,1.18);
\draw[blue,thick,dashed] (1.18,0.32) -- (0.32,1.18);
\draw[blue,thick,dashed] (1.18,0.68) -- (0.32,-0.18);
\draw[blue,thick,dashed] (-0.18,0.68) -- (0.68,-0.18);

\filldraw[cyan] (-0.075, -0.075) rectangle (0.075, 0.075);
\filldraw[cyan] (-0.075, 0.925) rectangle (0.075, 1.075);
\filldraw[cyan] (0.925, 0.925) rectangle (1.075, 1.075);
\filldraw[cyan] (0.925, -0.075) rectangle (1.075, 0.075);
\filldraw[cyan] (0.575, 0.575) rectangle (0.425, 0.425);

\filldraw[gray] (0.5,1) circle (0.05);
\filldraw[gray] (0.5,0) circle (0.05);
\filldraw[gray] (0,0.5) circle (0.05);
\filldraw[gray] (1,0.5) circle (0.05);
\end{tikzpicture}
\end{tabular}
\end{center}
\caption{Maximal wallpaper groups}
\label{fig:maxwallpaper}
\end{figure}
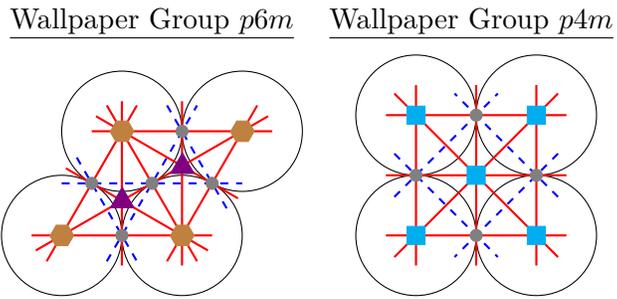

\begin{figure}[H]
\begin{center}
\begin{tabular}{c c}
\underline{Wallpaper Group $p3$}
    &
\underline{Wallpaper Group $p31m$}
\vspace{2mm}\\
\begin{tikzpicture}[scale=0.7]
\draw(-2, 0) circle (1cm);
\draw(0, 0) circle (1cm);
\draw(2, 0) circle (1cm);
\draw(-1, {sqrt(3)}) circle (1cm);
\draw(1, {sqrt(3)}) circle (1cm);
\draw(-1, {-sqrt(3)}) circle (1cm);
\draw(1, {-sqrt(3)}) circle (1cm);

\draw(0, {2*sqrt(3)/3}) circle ({2*sqrt(3)/3-1});
\draw(-1, -{sqrt(3)/3}) circle ({2*sqrt(3)/3-1});
\draw(1, -{sqrt(3)/3}) circle ({2*sqrt(3)/3-1});
\draw({(4*sqrt(3)-9)/11}, {2*sqrt(3)/3 - (3*sqrt(3)-4)/11}) circle ({1/(9+4*sqrt(3)});
\draw({-1+(9-4*sqrt(3))/11}, {-sqrt(3)/3 - (3*sqrt(3) - 4)/11)}) circle ({1/(9+4*sqrt(3)});
\draw(1, {-1+sqrt(3)/3+1/(9+4*sqrt(3))}) circle ({1/(9+4*sqrt(3))});

\filldraw[violet] (1.8,-0.12) -- (2.2,-0.12) -- (2,0.23);
\filldraw[violet] (-1.2,{sqrt(3)-0.12}) -- (-0.8,{sqrt(3)-0.12}) -- (-1,{sqrt(3)+0.23});
\filldraw[violet] (-1.2,{-sqrt(3)-0.12}) -- (-0.8,{-sqrt(3)-0.12}) -- (-1,{-sqrt(3)+0.23});
\filldraw[violet] (-0.2,-0.12) -- (0.2,-0.12) -- (0,0.23);
\filldraw[violet] (0.8,{sqrt(3)-0.12}) -- (1.2,{sqrt(3)-0.12}) -- (1,{sqrt(3)+0.23});
\filldraw[violet] (0.8,{-sqrt(3)-0.12}) -- (1.2,{-sqrt(3)-0.12}) -- (1,{-sqrt(3)+0.23});
\filldraw[violet] (-2.2,-0.12) -- (-1.8,-0.12) -- (-2,0.23);

\end{tikzpicture}
    &
\begin{tikzpicture}[scale=0.7]
\draw(-2, 0) circle (1cm);
\draw(0, 0) circle (1cm);
\draw(2, 0) circle (1cm);
\draw(-1, {sqrt(3)}) circle (1cm);
\draw(1, {sqrt(3)}) circle (1cm);
\draw(-1, {-sqrt(3)}) circle (1cm);
\draw(1, {-sqrt(3)}) circle (1cm);

\draw(0, {2*sqrt(3)/3}) circle ({2*sqrt(3)/3-1});
\draw(-1, -{sqrt(3)/3}) circle ({2*sqrt(3)/3-1});
\draw(1, -{sqrt(3)/3}) circle ({2*sqrt(3)/3-1});
\draw({(4*sqrt(3)-9)/11}, {2*sqrt(3)/3 - (3*sqrt(3)-4)/11}) circle ({1/(9+4*sqrt(3)});
\draw({-1+(9-4*sqrt(3))/11}, {-sqrt(3)/3 - (3*sqrt(3) - 4)/11)}) circle ({1/(9+4*sqrt(3)});
\draw(1, {-1+sqrt(3)/3+1/(9+4*sqrt(3))}) circle ({1/(9+4*sqrt(3))});

\draw[red,thick] (-2.75,-1.3) -- (-0.25,3.03);
\draw[red,thick] (2.75,-1.3) -- (0.25,3.03);
\draw[red,thick] (-2.5,{-sqrt(3)}) -- (2.5,{-sqrt(3)});
\draw[blue,thick,dashed] (-2.7,{sqrt(3)/2}) -- (2.7,{sqrt(3)/2});
\draw[blue,thick,dashed] (-2.15,1.99) -- (0.65,-2.86);
\draw[blue,thick,dashed] (2.15,1.99) -- (-0.65,-2.86);
\filldraw[violet] (-0.2,-0.12) -- (0.2,-0.12) -- (0,0.23);
\end{tikzpicture}
\end{tabular}
\end{center}

\begin{center}
\begin{tabular}{c c}
\underline{Wallpaper Group $p3m1$}
    &
\underline{Wallpaper Group $p6$}
\vspace{2mm}\\
\begin{tikzpicture}[scale=0.7]
\draw(-2, 0) circle (1cm);
\draw(0, 0) circle (1cm);
\draw(2, 0) circle (1cm);

\draw(-1, {sqrt(3)}) circle (1cm);
\draw(1, {sqrt(3)}) circle (1cm);
\draw(-1, {-sqrt(3)}) circle (1cm);
\draw(1, {-sqrt(3)}) circle (1cm);

\draw[red,thick] (0,-2.5) -- (0,2.5);
\draw[red,thick] (-2.25,-1.3) -- (2.25,1.3);
\draw[red,thick] (2.25,-1.3) -- (-2.25,1.3);
\draw[red,thick] (-2,1.4) -- (-2,-1.4);
\draw[red,thick] (2,1.4) -- (2,-1.4);
\draw[red,thick] (-2.23,1.022) -- (0.23,2.44);
\draw[red,thick] (2.23,1.022) -- (-0.23,2.44);
\draw[red,thick] (-2.23,-1.022) -- (0.23,-2.44);
\draw[red,thick] (2.23,-1.022) -- (-0.23,-2.44);
\draw[blue,thick,dashed] (1,{sqrt(3)+.5}) -- (1,{-sqrt(3)-.5});
\draw[blue,thick,dashed] (-1,{sqrt(3)+.5}) -- (-1,{-sqrt(3)-.5});
\draw[blue,thick,dashed] (-2.45,-.3) -- (1.45,{sqrt(3)+.3});
\draw[blue,thick,dashed] (2.45,-.3) -- (-1.45,{sqrt(3)+.3});
\draw[blue,thick,dashed] (-2.45,0.3) -- (1.45,{-sqrt(3)-.3});
\draw[blue,thick,dashed] (2.45,.3) -- (-1.45,{-sqrt(3)-.3});
\filldraw[violet] (1.8,-0.12) -- (2.2,-0.12) -- (2,0.23);
\filldraw[violet] (-1.2,{sqrt(3)-0.12}) -- (-0.8,{sqrt(3)-0.12}) -- (-1,{sqrt(3)+0.23});
\filldraw[violet] (-1.2,{-sqrt(3)-0.12}) -- (-0.8,{-sqrt(3)-0.12}) -- (-1,{-sqrt(3)+0.23});
\filldraw[violet] (-0.2,-0.12) -- (0.2,-0.12) -- (0,0.23);
\filldraw[violet] (0.8,{sqrt(3)-0.12}) -- (1.2,{sqrt(3)-0.12}) -- (1,{sqrt(3)+0.23});
\filldraw[violet] (0.8,{-sqrt(3)-0.12}) -- (1.2,{-sqrt(3)-0.12}) -- (1,{-sqrt(3)+0.23});
\filldraw[violet] (-2.2,-0.12) -- (-1.8,-0.12) -- (-2,0.23);

\draw[fill=white](0, {2*sqrt(3)/3}) circle ({2*sqrt(3)/3-1});
\draw[fill=white](-1, -{sqrt(3)/3}) circle ({2*sqrt(3)/3-1});
\draw[fill=white](1, -{sqrt(3)/3}) circle ({2*sqrt(3)/3-1});
\end{tikzpicture}
    &
\begin{tikzpicture}[scale=0.7]
\draw(-2, 0) circle (1cm);
\draw(0, 0) circle (1cm);
\draw(2, 0) circle (1cm);
\draw(-1, {sqrt(3)}) circle (1cm);
\draw(1, {sqrt(3)}) circle (1cm);
\draw(-1, {-sqrt(3)}) circle (1cm);
\draw(1, {-sqrt(3)}) circle (1cm);

\draw(0, {2*sqrt(3)/3}) circle ({2*sqrt(3)/3-1});
\draw(-1, -{sqrt(3)/3}) circle ({2*sqrt(3)/3-1});
\draw(1, -{sqrt(3)/3}) circle ({2*sqrt(3)/3-1});
\draw(1, {sqrt(3)/3}) circle ({2*sqrt(3)/3-1});
\draw(0, {-2*sqrt(3)/3}) circle ({2*sqrt(3)/3-1});
\draw(-1, {sqrt(3)/3}) circle ({2*sqrt(3)/3-1});

\draw({(4*sqrt(3)-9)/11}, {2*sqrt(3)/3 - (3*sqrt(3)-4)/11}) circle ({1/(9+4*sqrt(3)});

\draw(-1, {1-sqrt(3)/3 - 1/(9+4*sqrt(3)}) circle ({1/(9+4*sqrt(3)});

\draw({-1+(9-4*sqrt(3))/11}, {-sqrt(3)/3 - (3*sqrt(3) - 4)/11)}) circle ({1/(9+4*sqrt(3)});

\draw({(9-4*sqrt(3))/11}, {-2*sqrt(3)/3 + (3*sqrt(3)-4)/11}) circle ({1/(9+4*sqrt(3)});

\draw(1, {-1+sqrt(3)/3+1/(9+4*sqrt(3))}) circle ({1/(9+4*sqrt(3))});

\draw({1-(9-4*sqrt(3))/11}, {sqrt(3)/3 + (3*sqrt(3) - 4)/11)}) circle ({1/(9+4*sqrt(3)});

\filldraw[brown] (-0.1, 0.17) -- (0.1,0.17) -- (0.2,0) -- (0.1,-0.17) -- (-0.1,-0.17) --(-0.2,0);
\filldraw[violet] (-1.2,{sqrt(3)-0.12}) -- (-0.8,{sqrt(3)-0.12}) -- (-1,{sqrt(3)+0.23});
\filldraw[violet] (-1.2,{-sqrt(3)-0.12}) -- (-0.8,{-sqrt(3)-0.12}) -- (-1,{-sqrt(3)+0.23});
\filldraw[violet] (0.8,{sqrt(3)-0.12}) -- (1.2,{sqrt(3)-0.12}) -- (1,{sqrt(3)+0.23});
\filldraw[violet] (0.8,{-sqrt(3)-0.12}) -- (1.2,{-sqrt(3)-0.12}) -- (1,{-sqrt(3)+0.23});
\filldraw[violet] (-2.2,-0.12) -- (-1.8,-0.12) -- (-2,0.23);
\filldraw[violet] (1.8,-0.12) -- (2.2,-0.12) -- (2,0.23);
\filldraw[gray] (0,{sqrt(3)}) circle (0.1);
\filldraw[gray] (3/2,{sqrt(3)/2}) circle (0.1);
\filldraw[gray] (3/2,{-sqrt(3)/2}) circle (0.1);
\filldraw[gray] (0,{-sqrt(3)}) circle (0.1);
\filldraw[gray] (-3/2,{-sqrt(3)/2}) circle (0.1);
\filldraw[gray] (-3/2,{sqrt(3)/2}) circle (0.1);
\end{tikzpicture}
\end{tabular}
\end{center}
\caption{Refinements of the triangular base configuration}
\label{fig:triangularrefinement}
\end{figure}
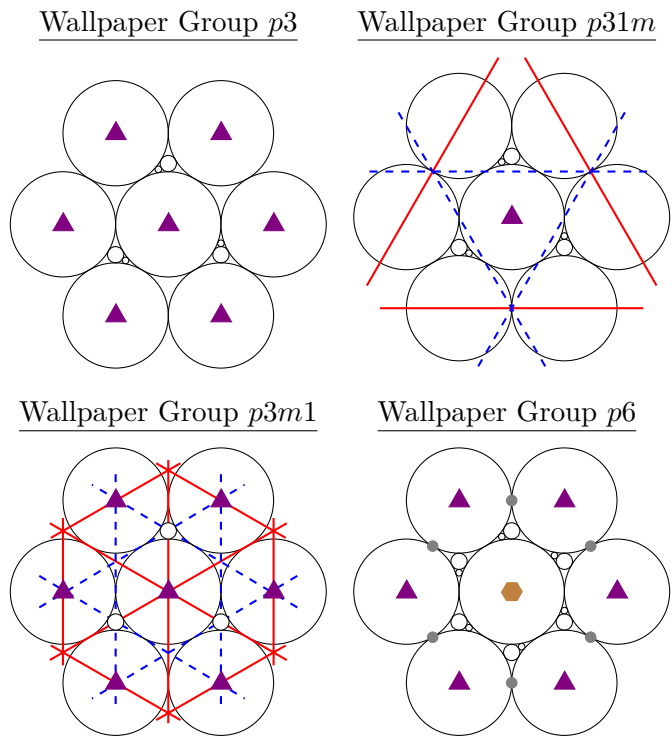

\begin{figure}
\begin{center}
\begin{tabular}{c c c}
\underline{Wallpaper Group $p1$}
    &
\underline{Wallpaper Group $p2$}
    &
\underline{Wallpaper Group $pm$}
\vspace{2mm}
\\

\begin{tikzpicture}[scale=0.6]
\draw(-2, -2) circle (1cm);
\draw(-2, 0) circle (1cm);
\draw(-2, 2) circle (1cm);
\draw(0, -2) circle (1cm);
\draw(0, 0) circle (1cm);
\draw(0, 2) circle (1cm);
\draw(2, -2) circle (1cm);
\draw(2, 0) circle (1cm);
\draw(2, 2) circle (1cm);

\draw(-1, 1) circle({sqrt(2)-1});
\draw(-1, {(9-4*sqrt(2))/7}) circle({(5-3*sqrt(2))/7});
\draw(1, 1) circle({sqrt(2)-1});
\draw(1, {(9-4*sqrt(2))/7}) circle({(5-3*sqrt(2))/7});
\draw(1, -1) circle({sqrt(2)-1});
\draw({(9-4*sqrt(2))/7}, -1) circle({(5-3*sqrt(2))/7});
\draw(-1, -1) circle({sqrt(2)-1});
\draw({(-4*sqrt(2)-5)/7}, -1) circle({(5-3*sqrt(2))/7});

\end{tikzpicture}
    & 
\begin{tikzpicture}[scale=0.6]
\draw(-2, -2) circle (1cm);
\draw(-2, 0) circle (1cm);
\draw(-2, 2) circle (1cm);
\draw(0, -2) circle (1cm);
\draw(0, 0) circle (1cm);
\draw(0, 2) circle (1cm);
\draw(2, -2) circle (1cm);
\draw(2, 0) circle (1cm);
\draw(2, 2) circle (1cm);

\draw(-1, 1) circle({sqrt(2)-1});
\draw(-1, {(9-4*sqrt(2))/7}) circle({(5-3*sqrt(2))/7});
\draw(1, -1) circle({sqrt(2)-1});
\draw(1, {-(9-4*sqrt(2))/7}) circle({(5-3*sqrt(2))/7});

\filldraw[gray] (-2,-2) circle (0.1);
\filldraw[gray] (-2,0) circle (0.1);
\filldraw[gray] (-2,2) circle (0.1);
\filldraw[gray] (0,-2) circle (0.1);
\filldraw[gray] (0,0) circle (0.1);
\filldraw[gray] (0,2) circle (0.1);
\filldraw[gray] (2,-2) circle (0.1);
\filldraw[gray] (2,0) circle (0.1);
\filldraw[gray] (2,2) circle (0.1);

\end{tikzpicture}
    &






\begin{tikzpicture}[scale = 0.6]
\draw(-2, 0) circle (1cm);
\draw(-2, 2) circle (1cm);
\draw(0, 0) circle (1cm);
\draw(0, 2) circle (1cm);
\draw(-2, -2) circle (1cm);
\draw(0, -2) circle (1cm);
\draw(2, -2) circle (1cm);
\draw(2, 0) circle (1cm);
\draw(2, 2) circle (1cm);
\draw(-1, 1) circle({sqrt(2)-1});
\draw(-1, {(4*sqrt(2)+5)/7}) circle({(5-3*sqrt(2))/7});
\draw[red,thick] (-2,-2.5) -- (-2,2.5);
\draw[red,thick] (2,-2.5) -- (2,2.5);
\end{tikzpicture}

\end{tabular}
\end{center}

\begin{center}
\begin{tabular}{c c c}
\underline{Wallpaper Group $pg$}
    &
\underline{Wallpaper Group $cm$}
    &  
\underline{Wallpaper Group $pmm$}
\vspace{2mm}
\\
\begin{tikzpicture}[scale=0.6]
\draw(-2, -2) circle (1cm);
\draw(-2, 0) circle (1cm);
\draw(-2, 2) circle (1cm);
\draw(0, -2) circle (1cm);
\draw(0, 0) circle (1cm);
\draw(0, 2) circle (1cm);
\draw(2, -2) circle (1cm);
\draw(2, 0) circle (1cm);
\draw(2, 2) circle (1cm);

\draw(-1, 1) circle({sqrt(2)-1});
\draw(1, -1) circle({sqrt(2)-1});
\draw({-(4*sqrt(2)+ 5)/7}, 1) circle({(5-3*sqrt(2))/7});
\draw(-1, {(4*sqrt(2)+5)/7}) circle({(5-3*sqrt(2))/7});
\draw({(4*sqrt(2)+ 5)/7}, -1) circle({(5-3*sqrt(2))/7});
\draw(1, {(4*sqrt(2)-9)/7}) circle({(5-3*sqrt(2))/7});

\draw[blue,thick,dashed] (-2,2.5) -- (-2,-2.5);
\draw[blue,thick,dashed] (0,2.5) -- (0,-2.5);
\draw[blue,thick,dashed] (2,2.5) -- (2,-2.5);

\end{tikzpicture}
    &
\begin{tikzpicture}[scale = 0.6]

\draw(-2, -2) circle (1cm);
\draw(-2, 0) circle (1cm);
\draw(-2, 2) circle (1cm);
\draw(0, -2) circle (1cm);
\draw(0, 0) circle (1cm);
\draw(0, 2) circle (1cm);
\draw(2, -2) circle (1cm);
\draw(2, 0) circle (1cm);
\draw(2, 2) circle (1cm);

\draw(1, 1) circle({sqrt(2)-1});
\draw(-1, -1) circle({sqrt(2)-1});

\draw({(-4*sqrt(2)- 5)/7}, -1) circle({(5-3*sqrt(2))/7});
\draw({(4*sqrt(2)+5)/7}, 1) circle({(5-3*sqrt(2))/7});
\draw(-1, {(4*sqrt(2)- 9)/7}) circle({(5-3*sqrt(2))/7});
\draw(1, {(4*sqrt(2)+5)/7}) circle({(5-3*sqrt(2))/7});

\draw[red,thick] (-2,2.5) -- (-2,-2.5);
\draw[blue,thick,dashed] (0,2.5) -- (0,-2.5);
\draw[red,thick] (2,2.5) -- (2,-2.5);



\end{tikzpicture}
    &





\begin{tikzpicture}[scale = 0.6]
\draw(-2, 0) circle (1cm);
\draw(-2, 2) circle (1cm);
\draw(0, 0) circle (1cm);
\draw(0, 2) circle (1cm);
\draw(-2, -2) circle (1cm);
\draw(0, -2) circle (1cm);
\draw(2, -2) circle (1cm);
\draw(2, 0) circle (1cm);
\draw(2, 2) circle (1cm);
\draw(-1, 1) circle({sqrt(2)-1});
\draw(-1, {(4*sqrt(2)+5)/7}) circle({(5-3*sqrt(2))/7});
\draw[red,thick] (-2.5,2) -- (2.5,2);
\draw[red,thick] (-2.5,-2) -- (2.5,-2);
\draw[red,thick] (-2,-2.5) -- (-2,2.5);
\draw[red,thick] (2,-2.5) -- (2,2.5);
\filldraw[gray] (2,2) circle (0.1);
\filldraw[gray] (2,-2) circle (0.1);
\filldraw[gray] (-2,2) circle (0.1);
\filldraw[gray] (-2,-2) circle (0.1);
\end{tikzpicture}
\end{tabular}
\end{center}

\begin{center}
\begin{tabular}{c c c}
\underline{Wallpaper Group $pmg$}
    &
\underline{Wallpaper Group $pgg$}
    &
\underline{Wallpaper Group $cmm$}
\vspace{2mm}\\

\begin{tikzpicture}[scale = 0.6]
\draw(-2, 0) circle (1cm);
\draw(-2, 2) circle (1cm);
\draw(0, 0) circle (1cm);
\draw(0, 2) circle (1cm);
\draw(-2, -2) circle (1cm);
\draw(0, -2) circle (1cm);
\draw(2, -2) circle (1cm);
\draw(2, 0) circle (1cm);
\draw(2, 2) circle (1cm);
\draw(-1, 1) circle({sqrt(2)-1});
\draw(-1, {(4*sqrt(2)+5)/7}) circle({(5-3*sqrt(2))/7});
\draw[red,thick] (-2.5,2) -- (2.5,2);
\draw[red,thick] (-2.5,-2) -- (2.5,-2);
\draw[blue,thick,dashed] (-2,-2.5) -- (-2,2.5);
\draw[blue,thick,dashed] (2,-2.5) -- (2,2.5);
\filldraw[gray] (2,0) circle (0.1);
\filldraw[gray] (-2,0) circle (0.1);
\end{tikzpicture}
&
\begin{tikzpicture}[scale = 0.6]
\draw(-2, 0) circle (1cm);
\draw(-2, 2) circle (1cm);
\draw(0, 0) circle (1cm);
\draw(0, 2) circle (1cm);
\draw(-2, -2) circle (1cm);
\draw(0, -2) circle (1cm);
\draw(2, -2) circle (1cm);
\draw(2, 0) circle (1cm);
\draw(2, 2) circle (1cm);

\draw(-1, 1) circle({sqrt(2)-1});
\draw(-1, {(4*sqrt(2)+5)/7}) circle({(5-3*sqrt(2))/7});



\draw[blue,thick,dashed] (-2.5,0) -- (2.5,0);
\draw[blue,thick,dashed] (0,2.5) -- (0,-2.5);
\filldraw[gray] (2,2) circle (0.1);
\filldraw[gray] (2,-2) circle (0.1);
\filldraw[gray] (-2,2) circle (0.1);
\filldraw[gray] (-2,-2) circle (0.1);
\end{tikzpicture}
    &
\begin{tikzpicture}[scale = 0.6]
\draw(-2, -2) circle (1cm);
\draw(-2, 0) circle (1cm);
\draw(-2, 2) circle (1cm);
\draw(0, -2) circle (1cm);
\draw(0, 0) circle (1cm);
\draw(0, 2) circle (1cm);
\draw(2, -2) circle (1cm);
\draw(2, 0) circle (1cm);
\draw(2, 2) circle (1cm);

\draw(-1, 1) circle({sqrt(2)-1});
\draw(-1, {(9-4*sqrt(2))/7}) circle({(5-3*sqrt(2))/7});
\draw(1, -1) circle({sqrt(2)-1});
\draw(1, {-(9-4*sqrt(2))/7}) circle({(5-3*sqrt(2))/7});

\draw[red,thick] (-2.5,2) -- (2.5,2);
\draw[red,thick] (-2.5,-2) -- (2.5,-2);
\draw[red,thick] (-2,2.5) -- (-2,-2.5);
\draw[red,thick] (2,-2.5) -- (2,2.5);
\draw[blue,thick,dashed] (0,2.5) -- (0,-2.5);
\draw[blue,thick,dashed] (2.5,0) -- (-2.5,0);
\filldraw[gray] (2,2) circle (0.1);
\filldraw[gray] (-2,2) circle (0.1);
\filldraw[gray] (2,-2) circle (0.1);
\filldraw[gray] (-2,-2) circle (0.1);
\filldraw[gray] (0,0) circle (0.1);
\end{tikzpicture}
\end{tabular}
\end{center}

\begin{center}
\begin{tabular}{c c}
\underline{Wallpaper Group $p4$}
    &
\underline{Wallpaper Group $p4g$}
\vspace{2mm}\\
\begin{tikzpicture}[scale=0.6]
\draw(-2, -2) circle (1cm);
\draw(-2, 0) circle (1cm);
\draw(-2, 2) circle (1cm);
\draw(0, -2) circle (1cm);
\draw(0, 0) circle (1cm);
\draw(0, 2) circle (1cm);
\draw(2, -2) circle (1cm);
\draw(2, 0) circle (1cm);
\draw(2, 2) circle (1cm);

\draw(-1, 1) circle({sqrt(2)-1});
\draw(-1, {(4*sqrt(2)+5)/7}) circle({(5-3*sqrt(2))/7});
\draw(1, 1) circle({sqrt(2)-1});
\draw({(4*sqrt(2)+5)/7}, 1) circle({(5-3*sqrt(2))/7});
\draw(-1, -1) circle({sqrt(2)-1});
\draw({(-5-4*sqrt(2))/7}, -1) circle({(5-3*sqrt(2))/7});
\draw(1, -1) circle({sqrt(2)-1});
\draw(1, {-(4*sqrt(2)+5)/7}) circle({(5-3*sqrt(2))/7});

\filldraw[cyan] (-0.15,-0.15) rectangle (0.15,0.15);
\filldraw[cyan] (-2.15,-2.15) rectangle (-1.85,-1.85);
\filldraw[cyan] (-2.15,1.85) rectangle (-1.85,2.15);
\filldraw[cyan] (1.85,-2.15) rectangle (2.15,-1.85);
\filldraw[cyan] (1.85,1.85) rectangle (2.15,2.15);
\filldraw[gray] (2,0) circle (0.1);
\filldraw[gray] (0,2) circle (0.1);
\filldraw[gray] (-2,0) circle (0.1);
\filldraw[gray] (0,-2) circle (0.1);
\end{tikzpicture}
    &

\begin{tikzpicture}[scale=0.6]
\draw(-2, -2) circle (1cm);
\draw(-2, 0) circle (1cm);
\draw(-2, 2) circle (1cm);
\draw(0, -2) circle (1cm);
\draw(0, 0) circle (1cm);
\draw(0, 2) circle (1cm);
\draw(2, -2) circle (1cm);
\draw(2, 0) circle (1cm);
\draw(2, 2) circle (1cm);

\draw(-1, 1) circle({sqrt(2)-1});
\draw(-1, {(4*sqrt(2)+5)/7}) circle({(5-3*sqrt(2))/7});
\draw(1, 1) circle({sqrt(2)-1});
\draw({(4*sqrt(2)+5)/7}, 1) circle({(5-3*sqrt(2))/7});
\draw(-1, -1) circle({sqrt(2)-1});
\draw({(-5-4*sqrt(2))/7}, -1) circle({(5-3*sqrt(2))/7});
\draw(1, -1) circle({sqrt(2)-1});
\draw(1, {-(4*sqrt(2)+5)/7}) circle({(5-3*sqrt(2))/7});

\draw[blue,thick,dashed] (0,2.5) -- (0,-2.5);
\draw[blue,thick,dashed] (2.5,0) -- (-2.5,0);
\draw[red,thick] (2.5,2) -- (-2.5,2);
\draw[red,thick] (2.5,-2) -- (-2.5,-2);
\draw[red,thick] (2,2.5) -- (2,-2.5);
\draw[red,thick] (-2,2.5) -- (-2,-2.5);

\filldraw[cyan] (-0.15,-0.15) rectangle (0.15,0.15);
\filldraw[gray] (2,2) circle (0.1);
\filldraw[gray] (-2,2) circle (0.1);
\filldraw[gray] (2,-2) circle (0.1);
\filldraw[gray] (-2,-2) circle (0.1);

\end{tikzpicture}
\end{tabular}
\end{center}
\caption{Refinements of the square base configuration}
\label{fig:squarerefinement}
\end{figure}
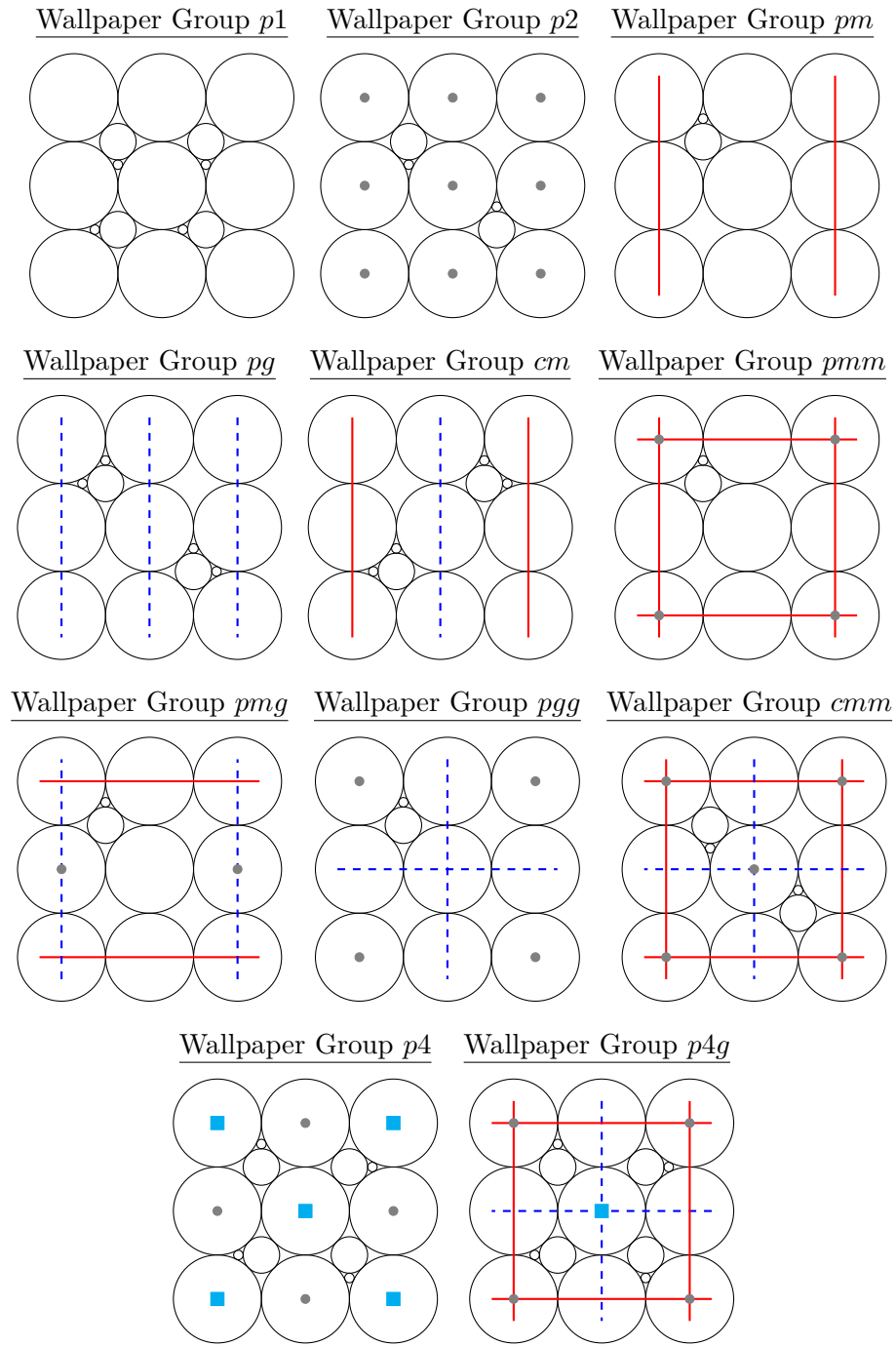
\end{proof}
\vspace{1in}

\bibliography{ApollonianBib}

\providecommand{\bysame}{\leavevmode\hbox to3em{\hrulefill}\thinspace}
\providecommand{\MR}{\relax\ifhmode\unskip\space\fi MR }
\providecommand{\MRhref}[2]{%
  \href{http://www.ams.org/mathscinet-getitem?mr=#1}{#2}
}
\providecommand{\href}[2]{#2}
\begin{thebibliography}{10}

\bibitem{Ahmed1}
Nooria Ahmed, William Ball, Ellis Buckminster, Emilie Rivkin, Dylan Torrance,
  Jake Viscusi, Runze Wang, Ian Whitehead, and S.~Yang, \emph{Domains of
  convergence for polyhedral packings}, arxiv:2109.01289.

\bibitem{Ahmed2}
Nooria Ahmed, William Ball, Emilie Rivkin, Dylan Torrance, Jake Viscusi, and
  Ian Whitehead, \emph{Fractal dimensions of polyhedral packings}, unpublished.

\bibitem{Andreev}
E.~M. Andreev, \emph{Convex polyhedra of finite volume in
  {L}oba\v{c}evski\u{\i} space}, Mat. Sb. (N.S.) \textbf{83 (125)} (1970),
  256--260. \MR{0273510}

\bibitem{BeardonStephenson}
Alan~F. Beardon and Kenneth Stephenson, \emph{The uniformization theorem for
  circle packings}, Indiana Univ. Math. J. \textbf{39} (1990), no.~4,
  1383--1425. \MR{1087197}

\bibitem{BrightwellScheinerman}
Graham~R. Brightwell and Edward~R. Scheinerman, \emph{Representations of planar
  graphs}, SIAM J. Discrete Math. \textbf{6} (1993), no.~2, 214--229.
  \MR{1215229}

\bibitem{Coxeter}
H.~S.~M. Coxeter, \emph{Introduction to geometry}, Wiley Classics Library, John
  Wiley \& Sons, Inc., New York, 1989, Reprint of the 1969 edition. \MR{990644}

\bibitem{FuchsStangeZhang}
Elena Fuchs, Katherine~E. Stange, and Xin Zhang, \emph{Local-global principles
  in circle packings}, Compos. Math. \textbf{155} (2019), no.~6, 1118--1170.
  \MR{3952499}

\bibitem{GLMWYNumTheory}
Ronald~L. Graham, Jeffrey~C. Lagarias, Colin~L. Mallows, Allan~R. Wilks, and
  Catherine~H. Yan, \emph{Apollonian circle packings: number theory}, J. Number
  Theory \textbf{100} (2003), no.~1, 1--45. \MR{1971245}

\bibitem{GuettlerMallows}
Gerhard Guettler and Colin Mallows, \emph{A generalization of {A}pollonian
  packing of circles}, J. Comb. \textbf{1} (2010), no.~1, [ISSN 1097-959X on
  cover], 1--27. \MR{2675919}

\bibitem{KapovichKontorovich}
Michael Kapovich and Alex Kontorovich, \emph{On superintegral kleinian sphere
  packings, bugs, and arithmetic groups}, arxiv:2104.13838.

\bibitem{Kocik2}
Jerzy Kocik, \emph{A theorem on circle configuarations}, arXiv:0706.0372.

\bibitem{Koebe}
Paul Koebe, \emph{Kontaktprobleme der konformen {A}bbildung}, Ber. Verh.
  S\"{a}chs. Akad. Leipzig \textbf{88} (1936), 141--164.

\bibitem{KontorovichNakamura}
Alex Kontorovich and Kei Nakamura, \emph{Geometry and arithmetic of
  crystallographic sphere packings}, Proc. Natl. Acad. Sci. USA \textbf{116}
  (2019), no.~2, 436--441. \MR{3904690}

\bibitem{KontorovichOh}
Alex Kontorovich and Hee Oh, \emph{Apollonian circle packings and closed
  horospheres on hyperbolic 3-manifolds}, J. Amer. Math. Soc. \textbf{24}
  (2011), no.~3, 603--648, With an appendix by Oh and Nimish Shah. \MR{2784325}

\bibitem{LagariasMallowsWilks}
Jeffrey~C. Lagarias, Colin~L. Mallows, and Allan~R. Wilks, \emph{Beyond the
  {D}escartes circle theorem}, Amer. Math. Monthly \textbf{109} (2002), no.~4,
  338--361. \MR{1903421}

\bibitem{MumfordSeriesWright}
David Mumford, Caroline Series, and David Wright, \emph{Indra's pearls: the
  vision of {F}elix {K}lein}, Cambridge University Press, Cambridge, 2015.
  \MR{3558870}

\bibitem{RodinSullivan}
Burt Rodin and Dennis Sullivan, \emph{The convergence of circle packings to the
  {R}iemann mapping}, J. Differential Geom. \textbf{26} (1987), no.~2,
  349--360. \MR{906396}

\bibitem{ScharlauWalhorn}
Rudolf Scharlau and Claudia Walhorn, \emph{Integral lattices and hyperbolic
  reflection groups}, no. 209, 1992, Journ\'{e}es Arithm\'{e}tiques, 1991
  (Geneva), pp.~15--16, 279--291. \MR{1211022}

\bibitem{Schramm:ExistenceandUniqueness}
Oded Schramm, \emph{Existence and uniqueness of packings with specified
  combinatorics}, Israel J. Math. \textbf{73} (1991), no.~3, 321--341.
  \MR{1135221}

\bibitem{Schramm:Rigidity}
\bysame, \emph{Rigidity of infinite (circle) packings}, J. Amer. Math. Soc.
  \textbf{4} (1991), no.~1, 127--149. \MR{1076089}

\bibitem{Schramm:SquareGrid}
\bysame, \emph{Circle patterns with the combinatorics of the square grid}, Duke
  Math. J. \textbf{86} (1997), no.~2, 347--389. \MR{1430437}

\bibitem{Stange}
Katherine~E. Stange, \emph{The {A}pollonian structure of {B}ianchi groups},
  Trans. Amer. Math. Soc. \textbf{370} (2018), no.~9, 6169--6219. \MR{3814328}

\bibitem{Stephenson}
Kenneth Stephenson, \emph{Introduction to circle packing}, Cambridge University
  Press, Cambridge, 2005, The theory of discrete analytic functions.
  \MR{2131318}

\bibitem{Thurston}
W.P. Thurston, \emph{The geometry and topology of three-manifolds}, Princeton
  University, 1979.

\bibitem{Ziegler}
G\"{u}nter~M. Ziegler, \emph{Lectures on polytopes}, Graduate Texts in
  Mathematics, vol. 152, Springer-Verlag, New York, 1995. \MR{1311028}

\end{thebibliography}
\bibliographystyle{amsplain}

\end{document}